\definecolor{mygreen}{RGB}{28,172,0} 
\definecolor{mylilas}{RGB}{170,55,241}
\pgfplotsset{grid style={dotted,gray}}
\pgfplotsset{compat=newest}
\pgfplotsset{plot coordinates/math parser=false}
\newlength\figureheight
\newlength\figurewidth
\theoremstyle{definition}
\newtheorem{example}{Example}
\newtheorem{remark}{Remark}
\definecolor{KTHblue}{RGB}{25,105,188}
\definecolor{KTHlblue}{RGB}{22,159,219}
\definecolor{KTHyellow}{RGB}{251,186,0}
\definecolor{KTHred}{RGB}{176,9,48}
\definecolor{KTHlred}{RGB}{231,51,57}
\definecolor{KTHgreen}{RGB}{98,146,46}
\definecolor{KTHlgreen}{RGB}{175,202,11}
\definecolor{KTHpink}{RGB}{219,81,151}
\renewcommand{\vec}[1]{\boldsymbol{#1}}
\newcommand{\vecmc}[1]{\boldsymbol{\mathcal{#1}}}
\providecommand{\keywords}[1]{\textbf{Key words: } #1}
\newcommand{\thickhline}{%
	\noalign {\ifnum 0=`}\fi \hrule height 1pt
	\futurelet \reserved@a \@xhline
}
\newcommand{\bi}{\begin{itemize}}
\newcommand{\ei}{\end{itemize}}
\newcommand{\ben}{\begin{enumerate}}
\newcommand{\een}{\end{enumerate}}
\newcommand{\be}{\begin{equation}}
\newcommand{\ee}{\end{equation}}
\newcommand{\bea}{\begin{eqnarray}} 
\newcommand{\eea}{\end{eqnarray}}
\newcommand{\ba}{\begin{align}} 
\newcommand{\ea}{\end{align}}
\newcommand{\bse}{\begin{subequations}} 
\newcommand{\ese}{\end{subequations}}
\newcommand{\bc}{\begin{center}}
\newcommand{\ec}{\end{center}}
\newcommand{\bfi}{\begin{figure}}
\newcommand{\efi}{\end{figure}}
\newcommand{\ca}[2]{\caption{#1 \label{#2}}}
\newcommand{\ig}[2]{\includegraphics[#1]{#2}}
\newcommand{\bmp}[1]{\begin{minipage}{#1}}
\newcommand{\emp}{\end{minipage}}
\newcommand{\bp}{\begin{proof}}
\newcommand{\ep}{\end{proof}}
\newcommand{\mbf}[1]{{\mathbf #1}}
\newcommand{\R}{\mathbb{R}}
\newcommand{\bigO}{{\mathcal O}}
\newcommand{\qqquad}{\qquad\qquad}
\newcommand{\qqqquad}{\qqquad\qqquad}
\newtheorem{thm}{Theorem}
\newtheorem{pro}[thm]{Proposition}
\newcommand{\x}{\mathbf{x}}
\newcommand{\y}{\mathbf{y}}
\newcommand{\qq}{\mathbf{q}}
\newcommand{\n}{\mathbf{n}}
\newcommand{\bal}{\bm{\alpha}}
\newcommand{\bga}{\bm{\gamma}}
\newcommand{\eps}{\varepsilon}
\newcommand{\E}{\R^3\backslash\overline{\Omega}}    
\newcommand{\pO}{\partial\Omega}
\newcommand{\ok}{^{(k)}}
\newcommand{\okp}{^{(k')}}
\newcommand{\SR}{{\cal S}}           
\title{A Method of Fundamental Solutions for Large-Scale 3D Elastance and Mobility Problems}
\author{Anna Broms$^{*,1)}$, Alex H. Barnett$^{2)}$ and Anna-Karin Tornberg$^{1)}$\\\\ $^{1)}$ 
	Department of Mathematics, KTH Royal Institute of Technology,
	Stockholm, Sweden\\\\	
	$^{2)}$ Center for Computational Mathematics, Flatiron Institute,
	New York, United States\\\\
	$^*$e-mail: annabrom@kth.se, https://www.kth.se/profile/annabrom}
\date{\today}
\begin{document}

\maketitle

\begin{abstract}
The method of fundamental solutions (MFS) is known to be effective for solving 3D Laplace and Stokes Dirichlet boundary value problems in the exterior of a large collection of simple smooth objects. Here we present new scalable MFS formulations for the corresponding elastance and mobility problems. The elastance problem computes the potentials of conductors with given net charges, while the mobility problem%
---crucial to rheology and complex fluid applications---%
computes rigid body velocities given net forces and torques on the particles. The key idea is orthogonal projection of the net charge (or forces and torques) in a rectangular variant of a ``completion flow.''
  The proposal is compatible with one-body preconditioning, resulting in well-conditioned square linear systems amenable to fast multipole accelerated iterative solution, thus a cost linear in the particle number. For large suspensions with moderate lubrication forces, MFS sources on inner proxy-surfaces give accuracy on par with a well-resolved boundary integral formulation. Our several numerical tests include a suspension of 10000 nearby ellipsoids, using $2.6\times 10^7$ total preconditioned degrees of freedom, where
  GMRES converges to five digits of accuracy in under two hours on one workstation.\\\\
\keywords{Elliptic PDE, mobility, Stokes flow, rigid bodies, completion formulation }
\end{abstract}


	\section{Introduction}
Systems of microscale rigid particles immersed in viscous fluids describe a wide range of phenomena in nature and industry. Examples include transport or diffusion processes \cite{souzy15,Driscoll2017,Sprinkle2017,Sprinkle2020},
rheology and nonlinear shear thickening \cite{Wang2021,Ge2022,fossbrady,Wang2016},
phase transitions in liquid crystals \cite{RevolJean,Wang2019,Yan2019}, collective order in biological systems or in materials science \cite{Hakansson2014,Tran}, 
 and assemblies of
functionalized nanoparticles, with applications in imaging and drug delivery \cite{Wang2023,Chaparro2023}.
Numerical modeling of the dynamics of such systems at zero Reynolds number (negligible inertia) requires solving at each time-step the so-called {\em mobility problem} for the unknown rigid body motions of every particle, given their net forces and torques, with the Stokes equations governing the flow in the fluid domain.
Efficient solvers for this boundary value problem (BVP) are thus needed which can scale to large particle numbers.

For an overview of methods for solving the Stokes mobility problem, see Maxey \cite{Maxey2017}, and the PhD theses of Bagge \cite{Bagge2023} and Peláez \cite{pelaez2022}. In the literature on approximate methods, such as the rigid multiblob method and Stokesian dynamics, the mobility problem is solved via saddle-point linear systems, where the given net forces and torques on each particle appear as additional constraints. These square systems can be efficiently preconditioned and solved via acceleration by fast summation techniques \cite{USABIAGA2016,Fiore2019}.  Turning to convergent methods involving exact Green's functions, the most popular is boundary integral equations (BIE), in which the flow is represented as a surface layer potential whose jump relation is exploited to give a Fredholm second-kind system \cite{Corona2017,Corona2018,Yan2020,AfKlinteberg2016,Rachh2016,Wang2021}.
While iterative solution with fast summation is effective \cite{AfKlinteberg2016,tornberg2008fast},
the challenges of discretization of on-surface weakly-singular integral operators, and quadratures to accurately evaluate the potential near to particle surfaces, remain.   
For smooth deformations of a sphere, the self-interaction (``one-particle'') operator may be discretized globally with high order or spectral accuracy using either Galerkin \cite{atkinson82} or Nystr\"om \cite{Gimbutas2013,Corona2017} methods.
For spherical particles, symmetry may be exploited to diagonalize the on-surface operators and the close-evaluation problem using vector spherical harmonics \cite{Corona2018}. In state-of-the-art work by Yan et al.~\cite{Yan2020}, this was applied to systems of up to 80,000 spheres, using $4\times 10^7$ degrees of freedom, distributed across 1792 CPU cores.

Also using exact Green's functions---but rarely applied in the Stokes setting---is the method of fundamental solutions (MFS). Here an interior rather than surface source distribution is used, and boundary conditions are directly applied at surface collocation nodes in the least-squares sense
\cite{Ka89,Alves2004,Fairweather2005,Barnett2007,Alves2009,Karageorghis2019,Antunes2022}.
Moving the source away from the surface renders the quadratures for both self-interaction and close-evaluation trivial, but brings the disadvantage of a
rectangular exponentially ill-conditioned self-interaction operator.
Yet, this need not pose a problem if each particle needs only a moderate enough number of unknowns to be amenable to dense direct 
least-squares solution.
The authors recently demonstrated an efficient such solution strategy for the Stokes resistance problem (the velocity BVP that is the {\em inverse} of the mobility problem) for spheres \cite{Broms2024}. There, following Liu and the 2nd author in the Helmholtz setting \cite{Liu2016}, we proposed a one-particle (i.e., block-diagonal) preconditioner via dense factorization of each one-particle MFS matrix, leaving a well-conditioned global square system (with identity diagonal blocks) involving surface velocity unknowns, which is solved iteratively using a Stokes fast multipole method.
When the geometry is a large collection of simple smooth particles,
this MFS approach appears to be as efficient as BIE methods \cite{Barnett2007,Stein2022}
(and, when augmented by image charges, even more so \cite{Broms2024}),
while being much simpler to implement in terms of quadrature and close-evaluation.
Indeed, for Stokes spheres, Appendix~\ref{acc_spheres} shows that the MFS needs a very similar number of unknowns as a BIE,
while avoiding all of the algebra of vector spherical harmonics \cite{Corona2018}.
When compared against the rigid multiblob method, the MFS is also much more accurate, as Appendix \ref{multiblob} shows.

It is thus 
appealing to try to apply the MFS to Stokes mobility problems with smooth particle shapes.
However, a direct application fails because imposing the given net forces and torques leads to a constrained least squares problem. This can be written as a \emph{rectangular} saddle point system, where, in contrast to the rigid multiblob method and Stokesian dynamics, efficient preconditioning and fast summation techniques are difficult to apply. The main contributions of the present work are then: 1) to present a different MFS formulation that is free from such additional constraints, and 2) to show how it may be combined with one-particle preconditioning and a fast iterative solution to tackle multi-particle mobility problems efficiently.
This combination allows, we believe for the first time, mobility problems with $10^4$ non-spherical particles to be solved to several accurate digits on a single shared-memory node.

Our main idea is inspired by ``completion'' or ``compound'' flows in the Stokes literature; see, for example, Power \& Miranda \cite{PowerMiranda} and Pozrikidis \cite{Pozrikidis1992}, following Mikhlin \cite{Mikhlin}.
Recall that a double-layer BIE representation---being incapable of generating non-zero net force or torque---must be augmented by another type of representation
which supplies the given net force and torque.
The latter is usually a single interior Stokeslet and rotlet (e.g., \cite{Kim1991,AfKlinteberg2016}), but may also be a line source \cite{Bagge2021} 
or a single-layer surface potential \cite{malhotra2023}.
In contrast, in the present MFS case, the proxy Stokeslets {\em can} supply net force and torque.
Thus we emulate a zero-net-force-and-torque source by orthogonal projection, then use the projected-out proxy-source subspace itself as the completion source;
one might call this a ``recompleted'' representation.
It can be viewed as an MFS variant of the recent BIE work of Malhotra and the 2nd author \cite[Sec.~4.2]{malhotra2023}
(which is the adjoint of a popular interior traction mobility formulation \cite{karrilakim89,Rachh2016,Corona2017}),
and it will demand both square and rectangular ``ones matrices'' (low-rank matrix perturbations that can remove nullspaces \cite{onesmatrix}).

The entire formulation is simpler for the (scalar Laplace) elastance problem than for the (vector-valued Stokes) mobility problem; therefore, following Rachh \& Greengard \cite{Rachh2016},
we start our exposition and numerical tests with elastance.
This application is of independent interest in electrostatics
with large numbers of smooth conductors of given charges and unknown potentials. Although the MFS is popular in the engineering literature, we have not found prior work applying the MFS to elastance, so believe that this is also a useful contribution.

   \begin{figure}[h!]
			\centering
			\includegraphics[trim = {37cm 23cm 32cm 18cm},clip,width=1\textwidth]{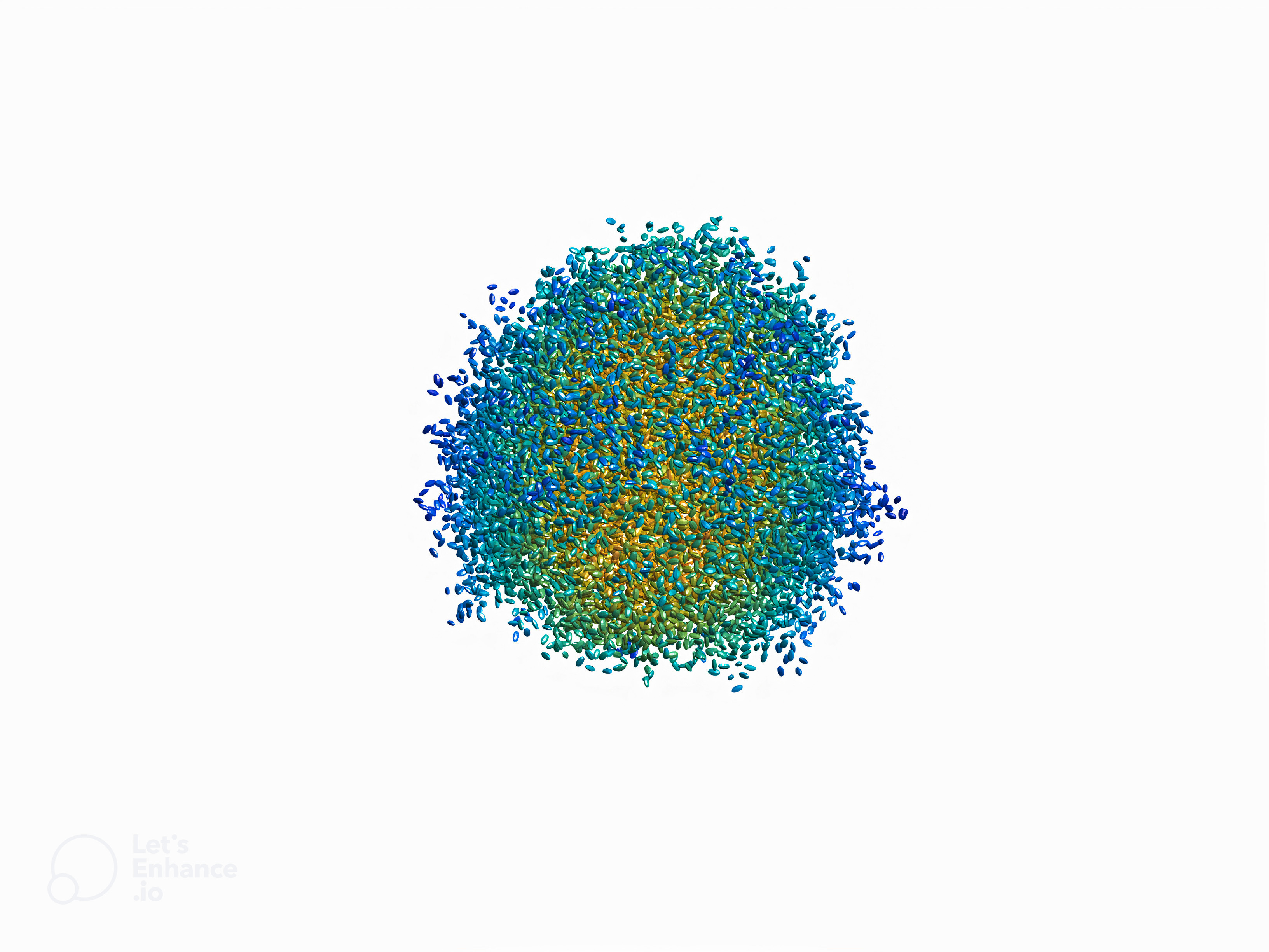}
   \caption{ The mobility problem is solved to 5-digit accuracy for 10000 ellipsoids with semiaxes $\lbrace 0.4,0.6,1 \rbrace$ and minimum separations $\delta = 0.2$. Each particle is discretized with $N=648$ interior Stokeslet sources (1944 unknowns). Color shows surface velocity magnitudes (blue is small and yellow large).}
   \label{large_ex}
   \end{figure}

  \begin{remark}[Close interactions]\label{rem:close}
    Particles moving relative to each other at diminishing separation distances experience increasingly strong lubrication forces. These manifest themselves as peaked force densities near any surface point close-to-touching with another particle \cite[Ch.~9]{Kim1991}, resulting in high resolution requirements, both for BIE methods and for methods based on a volumetric grid \cite{Kim1991,Lefebvre2021}. The same high resolution requirements appear for closely interacting conductors in the Laplace capacitance and elastance problems \cite{Cheng1998}. However, recent MFS work by the authors \cite{Broms2024} tackled this in the Stokes resistance problem for spheres, via discretized line-sources that approximate pair-wise infinite reflection image series, enabling controllable accuracy with few unknowns, down to separations of $10^{-3}R$, where $R$ is the radius.
 For simplicity, we do not attempt to incorporate such images into the presented mobility framework, but expect that it will be straightforward, and leave it for future work.
We thus confine our tests to moderate particle separations ($\ge 0.05 R$), as in other work \cite{Yan2020}.
\end{remark}

As a motivational example, we solve the mobility problem
for the random cluster of 10000 ellipsoids shown in Fig.~\ref{large_ex},
to 5-digit accuracy in the rigid body velocities.
This needs only a single large-memory workstation.%
The solution converges in 1.64 
 hours, in only 7 GMRES iterations, using about $2.6\times 10^7$ unknowns. For comparison, a system of 131 spheroids was solved in a BIE method using 707,400 unknowns in \cite{AfKlinteberg2016}, but the particles were stationary, making it an easier problem due to the absence of lubrication singularities. Note also that, in contrast to \cite{AfKlinteberg2016}, the ellipsoids here need not be spheroids.

\begin{remark}[Code availability and computational setup]
\textsc{Matlab} code for solving the Stokes BVPs is available at \url{https://github.com/annabroms/StokesMFS3D}.
For the Laplace BVPs, our implementation is written in Julia and can be found at \url{https://github.com/ahbarnett/mfs-mobility}. The motivational example shown in Fig.~\ref{large_ex}, along with most other examples in this paper, was run on an 8-core workstation equipped with two Intel\textsuperscript{\textregistered} Xeon\textsuperscript{\textregistered} E5-2637 v3 3.50GHz CPUs (released in 2014) and 256 GB of RAM.
\end{remark}

\subsection{The capacitance and elastance problems}\label{sec:cap}

We complete this introduction with a mathematical statement of the problems to be solved, starting with the scalar case.
Let $\Omega^{(k)}$, $k=1,\dots,P$, be a collection of smooth bounded
disjoint objects in $\R^3$,
each of which has boundary $\pO^{(k)}$,
and let $\Omega \coloneqq \bigcup_{k=1}^P \Omega^{(k)}$ denote the collection,
and $\pO$ denote the union of all boundaries.
Given boundary voltage data $\eta$, the Laplace Dirichlet boundary value problem (BVP) is to find $u$
such that
\bea
\Delta u &=& 0 \qquad \mbox{ in } \E,
\label{pde}
\\
u &=& \eta\qquad\mbox{ on } \pO,
\label{dbc}
\eea
with the decay condition that $u(\vec x) \to 0$ as $\|\vec x\|\to \infty$, uniformly in angle.

The special case where the data in \eqref{dbc} takes the form
\be
u = \phi^{(k)}\qquad\mbox{ on } \pO^{(k)},
\label{vk}
\ee
where $\phi^{(k)}\in\R$ is a given constant voltage on the $k$th boundary,
is called the {\em capacitance problem}.
One seeks to know the resulting net charges
\be
q^{(k)} \coloneqq -\int_{\pO^{(k)}} u_n \,\mathrm dS_{\vec y},
\label{qk}
\ee
where $u_n\coloneqq\partial u/\partial n = \n\cdot \nabla u$,
and $\n$ is the outward unit normal on the boundary.
The BVP solution defines a linear map from the input voltage vector
$\vec\phi \coloneqq\{\phi^{(k)}\}_{k=1}^P$ to the output charge vector $\qq \coloneqq \{q^{(k)}\}_{k=1}^P$,
i.e., a positive semidefinite capacitance matrix $\vec C\in \R^{P\times P}$ acting as
\be
\vec q = \vec C \vec \phi.
\label{qCv}
\ee
The full matrix $\vec C$ could in principle be extracted by solving the BVP in \eqref{pde}--\eqref{vk} $P$ times, with $u\equiv 1$ on $\partial \Omega^{(k)}$ and $u \equiv 0$ on $\partial \Omega \setminus \partial \Omega^{(k)}$, for each of $k=1,\ldots,P$. However, this is rarely a practical proposition.

The {\em elastance problem} is the inverse of the capacitance problem.
Namely, given net charges $q^{(k)}$, $k=1,\dots,P$, we seek the solution $u$ to the PDE \eqref{pde} that is constant on each boundary, as in \eqref{vk}. The voltages $\phi^{(k)}$ are, however, unknown and must be determined as part of the problem.
If the capacitance matrix $\vec C$ were known, from \eqref{qCv} we would also have $\vec \phi = \vec C^{-1} \qq$, where $\vec C^{-1}$ is called the elastance matrix.
The goal is to solve the elastance problem for $\vec \phi$ given $\vec q$, without constructing nor inverting $\vec C$, with a computational cost linear in $P$.

\subsection{The resistance and mobility problems}

  The Stokes analog of capacitance is the resistance problem, where the $P$ objects represent rigid bodies immersed in a Newtonian fluid of constant viscosity $\mu$; see \cite{Rachh2016,Corona2017}.
  	The Stokes Dirichlet BVP is
\begin{equation}\label{stokeseq}
		\begin{aligned}
			-\mu\Delta \vec u + \nabla p &= \vec 0, && \text{ in }\mathbb R^3\backslash\overline{\Omega},\\
			\nabla\cdot \vec u &= 0,&&\text{ in }\mathbb R^3\backslash\overline{\Omega},\\
			\vec u &= \vec g,  &&\text{ on }\partial\Omega, \\
   \vec u(\vec x) &\to \vec 0, &&\|\vec x\|\to\infty,
		\end{aligned}
	\end{equation} 
	with $\vec u(\vec x)\in \mathbb R^3$ the flow velocity at the location $\vec x$ and $p(\vec x)\in \mathbb R$ the pressure. With the particles centered at coordinates $\lbrace\vec c^{(k)}\rbrace_{k=1}^P$ and moving with rigid body velocities and angular velocities $\lbrace \vec v^{(k)},\vec\omega^{(k)}\rbrace_{k=1}^P$, where $\vec v^{(k)},\vec\omega^{(k)} \in \mathbb R^3$, no-slip boundary conditions are applied such that 
 \begin{equation}
     \vec g(\vec x) = \vec v^{(k)}+\vec\omega^{(k)}\times(\vec x-\vec c^{(k)})
     \quad \text{ on }\partial \Omega^{(k)}, \qquad k=1,\ldots,P.
     \label{rbm}
 \end{equation} 
 Once the solution has been determined, the quantities of interest are the total net force and torque on each particle. With the stress tensor $\vec\sigma$ and surface traction $\vec T$ defined as usual by
 \begin{equation}\label{traction}
     \vec \sigma=-p \vec I + \mu \left( \nabla \vec u + \nabla \vec u^T \right),
     \qquad  
     \vec T =\vec\sigma\cdot\vec n,
 \end{equation} we can extract from a solution pair $(\vec u, p)$ the net force and torque on the $k$th particle as
\begin{equation}\label{forces}
\vec f^{(k)} =\int_{\partial\Omega^{(k)}}\vec T(\vec y) \,\mathrm dS_{\vec y},
\qquad
\vec t^{(k)} = \int_{\partial\Omega^{(k)}} (\vec y-\vec c^{(k)})\times \vec T(\vec y) \,\mathrm dS_{\vec y},
 \end{equation}
analogous to the net charge in \eqref{qk}.
The solution to \eqref{stokeseq}--\eqref{rbm} then defines a linear map from the input velocities $\{\vec v^{(k)}\}_{k=1}^P$ and $\{\vec \omega^{(k)}\}_{k=1}^P$, which we stack into a vector $\vec U \in \mathbb R^{6P}$, to the forces and torques, $\{\vec f^{(k)}\}_{k=1}^P$ and $\{\vec t^{(k)}\}_{k=1}^P$, which we stack into a vector $\vec F \in \mathbb R^{6P}$. This linear map is
\begin{equation}\label{resistance}
\vec F = \vec R \vec U,
  \end{equation}
where the resistance matrix $\vec R$ could in principle be recovered by solving  \eqref{stokeseq}--\eqref{rbm} $6P$ times with different right-hand sides, corresponding to $\vec U$ being columns of the identity matrix.
 
Inverting \eqref{resistance} is termed the {\em mobility problem}:
  one seeks a solution to \eqref{stokeseq}--\eqref{rbm} that matches given forces and torques \eqref{forces}, where the rigid body velocities in \eqref{rbm} are now unknown.
  This has a unique solution \cite{Rachh2016}.
Thus the mobility matrix $\vec M\in\mathbb R^{6P\times 6P}$ exists which maps the stacked vector of forces and torques on all particles to their
velocities, i.e.~$\vec M = \vec R^{-1}$, and $\vec U = \vec M \vec F$.
By energy dissipation considerations $\vec R$, and thus $\vec M$, is positive definite \cite{Kim1991}. 
Our goal is then to solve for $\vec U$ given $\vec F$, in only $\bigO(P)$ computational cost (which thus precludes forming $\vec M$).

\begin{remark}[Mobility generalizations]
  Two generalizations of the above arise in applications. i) There may be a given {\em background flow} $\vec u_\infty$ that is a Stokes solution in $\R^3$, such as a shear flow; then one defines $\vec u + \vec u_\infty$ as the physical flow and finds that $\vec u$ solves the above problem, except with an extra term $-\vec u_\infty|_{\partial\Omega^{(k)}}$ on the right-hand side of \eqref{rbm}.
  ii) Active swimmer particles may have a given nonzero slip velocity \cite{liu2024,USABIAGA2016}, which again adds a given term to \eqref{rbm}.
  These generalizations may be simply handled by corresponding extra right-hand side terms in the presented method, thus will not be discussed further.
\end{remark}

\subsection{Outline}
In Section \ref{Dirichlet}, we first  present MFS formulations for the capacitance and resistance problems, using sources on inner proxy-surfaces and boundary conditions imposed by collocation on the physical surfaces, and review their one-body preconditioning technique \cite{Broms2024}.
The main contribution is Section \ref{new_formulations}, namely novel MFS formulations for the elastance and mobility problems, and their one-body preconditioning. 
Section \ref{numerics} contains numerical studies, including computations of the charge density and traction over the particle surfaces. This starts with spheres in Section \ref{spheres}, where the excellent convergence and conditioning of both the elastance and mobility solvers are demonstrated.
The mobility solver is tested for large clusters of ellipsoidal particles in Section \ref{ellipsoids}, where acceleration with a fast multipole method (FMM) enables linear scaling in the particle number.  We conclude in Section \ref{conclusion}. The appendix compares the MFS convergence rates for spheres to those of the state-of-the-art spherical harmonics scheme of \cite{Corona2018,Yan2020}, and compares the MFS errors to those of the popular rigid multiblob method.

\section{The MFS for Dirichlet boundary value problems}\label{Dirichlet}

Here we summarize the MFS technique for capacitance and resistance, which serves to introduce ideas and notations needed later. (For resistance, this is the basic scheme presented in \cite{Broms2024}.) Consider the $k$th particle, and let $\y\ok_j$, $j=1,\dots,N$, be proxy (source) points inside $\Omega^{(k)}$; these will be chosen to lie on a surface a constant separation $\Delta_{\text{sep}}$ from the physical surface $\pO^{(k)}$.
 Thus, for a sphere of radius $R$, the proxy-surface has radius $R_p = R-\Delta_{\text{sep}}$. 
 Let $\x\ok_i$, $i=1,\dots,M$, be collocation points on $\pO^{(k)}$; these are chosen as the nodes of a high-order accurate quadrature scheme for the surface.   The stacked vectors of source and collocation points on particle $k$ are denoted respectively $\vec Y^{(k)}$ and $\vec X^{(k)}$.
 For high collocation accuracy with the MFS one usually sets $M$ slightly larger than $N$ \cite{Barnett2007,Broms2024}.
 For simplicity of notation we take $N$ and $M$ independent of $k$.

\subsection{The capacitance problem}\label{capacitance}
Recall the Laplace fundamental solution,
\be
 G(\x,\y) = G(\x-\y) = \frac{1}{4\pi \|\x-\y\|},
\ee
where $\x\in\R^3$ is a target point and $\y\in\R^3$ a source point.
This obeys $-\Delta G(\cdot,\y) = \delta_\y$ in the distributional
sense. The block of the MFS matrix $\vec S^{(kk')}$
from sources in body $k'$ to targets on body $k$
has entries
\be \label{Skk}
\vec S^{(kk')}_{ij} = G(\x\ok_i,\y\okp_j), \quad i=1,\dots,M, \quad j=1,\dots,N.
\ee

In the case of a single particle ($P=1$), the MFS then solves in the least-squares sense the formally overdetermined $M\times N$ system 
\be
\vec S^{(11)} \bal^{(1)} = \vec\eta^{(1)},
\label{1sys}
\ee
where $\vec\eta^{(1)}\coloneqq\{\eta(\x^{(1)}_i)\}_{i=1}^M$ is the Dirichlet data at the collocation points of particle 1
and $\bal^{(1)}\coloneqq\{\alpha^{(1)}_j\}_{j=1}^N$ are the unknown proxy strengths (coefficients).
For a general $P\ge1$ this becomes
\be
\begin{bmatrix}
\vec S^{(11)} & \vec S^{(12)} & \dots & \vec S^{(1P)} \\
\vec S^{(21)} & \vec S^{(22)} & \dots & \dots \\ 
\vdots &\vdots & \ddots &\vdots \\
\vec S^{(P1)} & \vec S^{(P2)} & \dots & \vec S^{(PP)} 
\end{bmatrix}\begin{bmatrix}
    \vec \alpha^{(1)} \\ \vec \alpha^{(2)} \\ \vdots \\ \vec \alpha^{(P)}
\end{bmatrix} = 
\begin{bmatrix}
    \vec \eta^{(1)} \\ \vec \eta^{(2)} \\ \vdots \\ \vec \eta^{(P)}
\end{bmatrix}.
\label{2sys}
\ee
In the capacitance problem, here one sets $\vec\eta^{(k)} = \phi^{(k)} \vec 1_M $, with $\mbf{1}_M$ the vector with all $M$ entries being $1$.

Having solved \eqref{2sys} in the least-squares sense for the stacked solution vector
$\bal\coloneqq\{\bal\ok\}_{k=1}^P$, the representation of the
solution is
\be
u(\x) = \sum_{k=1}^P \sum_{j=1}^N \alpha\ok_j G(\x,\y\ok_j),
\qquad \x\in\E,
\label{rep}
\ee
which we abbreviate by the notation 
\begin{equation}\label{rep_short}
    u = \sum_{k=1}^P\SR^{(k)}\bal^{(k)},
\end{equation} 
by analogy with layer potentials ($\SR^{(k)}$ is a single-layer proxy source from body $k$).

The net charges $q^{(k)}$ may be extracted either by evaluation of
\eqref{qk} (which requires accurate quadrature weights
for the set of collocation points, and evaluations of $\nabla G$),
or more conveniently via
\be
q^{(k)} = \sum_{j=1}^N \alpha\ok_j, \qquad k=1,\dots,P,
\label{qkgauss}
\ee
the sum of source strengths in the $k$th body, which follows by Gauss' law for $G$.

Each matrix block $\vec S^{(kk')}$ is a discretization of the 1st-kind layer operator from the $k'$th proxy-surface to the $k$th boundary (no Nystr\"om quadrature weights are needed, since they emerge through the linear solve).
Each block, including the diagonal (self-interaction) blocks $\vec S^{(kk)}$,
becomes exponentially ill-conditioned upon convergence (growing $N$), standard behavior for the MFS \cite{Barnett2007}.
Such ill-conditioning is of course inherited by the global system
\eqref{2sys}, but
in Section \ref{solving_res} we show how to precondition this system to make it square and amenable to iterative solution.

\subsection{The resistance problem}\label{resistance_sec}
The tensor-valued Stokes fundamental solution, known as the Stokeslet, is
	\begin{equation}\label{stokeslet}\index{Stokeslet}
		\vec G(\x,\y) = \frac{1}{8\pi\mu \|\vec x-\vec y\|}\left(\vec I_3+\frac{(\vec x-\vec y) (\vec x-\vec y)^T}{ \|\vec x-\vec y\|^2}\right),
	\end{equation}
 with $\vec I_3 \in \mathbb R^{3\times 3}$ the identity matrix. 
 A flow field obeying the Stokes equations, with the associated decay condition at infinity, can be expressed as 
\be
\vec u(\x) = \sum_{k=1}^P \sum_{j=1}^N  \vec G(\x,\y\ok_j) \vec\lambda\ok_j,
\qquad \x\in\E,
\label{repS}
\ee
where the coefficients $\{ \vec\lambda\ok_j \}_{j=1}^N$, $k=1,\dots,P$, are to be determined such that the boundary conditions in \eqref{stokeseq} are satisfied. We abbreviate the representation in \eqref{repS} by
\begin{equation}\label{stokes_sr}
    \vec u = \sum_{k = 1}^P\SR^{(k)}\vec\lambda^{(k)}.
\end{equation}
Letting $\vec b$ contain the stacked Dirichlet velocity data $\vec g$ evaluated at all collocation nodes $\lbrace \vec X^{(k)} \rbrace_{k=1}^{P}$,
the proxy coefficients $\vec\lambda$ solve the overdetermined $3MP\times 3NP$ system 
\begin{equation}\label{ressyst}
    \vec S\vec \lambda = \vec  b .
\end{equation}

We now specialize to rigid body motion data.
Assume first that $P=1$, and let $\vec U^{(1)} = \left[{\vec {v}^{(1)}}^T, {\boldsymbol\omega^{(1)}}^T\right]^T$ be the vector of rigid body velocities for one particle. Further, let $\vec K_M^{(1)}\in \mathbb R^{3M\times 6}$ be the matrix relating these velocities to particle surface velocities at its $M$ collocation points.  Then, a block row of the matrix $\vec K_M^{(1)}$ determines the velocity at the surface point $\vec x_i^{(1)}$ belonging to the particle, so that
	\begin{equation}
		\vec b_i = (\vec K_M^{(1)} \vec U^{(1)})_I = \vec v^{(1)} + \vec\omega^{(1)}\times(\vec x_i^{(1)}-\vec c^{(1)}),
	\end{equation}
	with $i$th index set $I = \lbrace 3(i-1)+k \rbrace_{k=1}^3$. 
In matrix form, $\vec K_M^{(1)}$ may be written as 
\begin{equation}\label{Kdef}
    \vec K_M^{(1)} = \begin{bmatrix}
        \vec I_3 & (\vec x_1^{(1)}-\vec c^{(1)})_\times \\
        \vec I_3 & (\vec x_2^{(1)}-\vec c^{(1)})_\times \\
        \vdots & \vdots \\
        \vec I_3 & (\vec x_M^{(1)}-\vec c^{(1)})_\times      
    \end{bmatrix},
    \qquad\text{using notation } (\vec d)_\times\coloneqq  \begin{bmatrix} 0 & d_3 &-d_2\\ -d_3 & 0 & d_1 \\ d_2 & -d_1 & 0 \end{bmatrix},
\end{equation}
the skew-symmetric matrix performing a cross-product. 
The least squares problem in \eqref{ressyst} for this one particle (alone in the fluid) hence takes the form $\vec S^{(11)} \vec \lambda^{(1)} = \vec K_M^{(1)}\vec U^{(1)}$.
The net forces and torques on the particle can be determined (analogously to \eqref{qkgauss})
from computed proxy coefficients $\vec \lambda^{(1)}$ via 
	\begin{equation}\label{force_eq}
		\vec f^{(1)} =\sum_{i = 1}^N\vec\lambda_i^{(1)},\quad\vec t^{(1)} = \sum_{i = 1}^N (\vec y_i^{(1)}-\vec c^{(1)})\times\vec\lambda_i^{(1)}.
	\end{equation}
 With more compact notation, \eqref{force_eq} can be written as\begin{equation}\label{force_syst}
		\begin{bmatrix} \vec f^{(1)}\\ \vec t^{(1)}\end{bmatrix} = {\vec K_N^{(1)}}^T\vec\lambda^{(1)},
	\end{equation}
	with the rigid body proxy matrix $\vec K_N^{(1)}\in\mathbb R^{3N\times 6}$ defined as in \eqref{Kdef}
 except that we use the subscript change from $M$ to $N$ to indicate that source points $\vec Y^{(1)}$ have replaced collocation nodes $\vec X^{(1)}$.
 
 Generalizing this to the full system of $P\ge 1$ particles, 
 the matrices $\vec K_M$ and $\vec K_N$ both have a block diagonal structure, with
 \begin{equation}\label{Kbig}
     \vec K_M\coloneqq \begin{bmatrix} \vec K_M^{(1)} & \vec 0 & \dots & \vec 0 \\
     \vec 0 &  \vec K_M^{(2)} & \dots & \vec 0 \\
     \vdots & \vdots & \ddots & \vdots \\
     \vec 0& \vec 0 & \dots & \vec K_M^{(P)} \end{bmatrix},\quad      \vec K_N\coloneqq \begin{bmatrix} \vec K_N^{(1)} & \vec 0 & \dots & \vec 0 \\
     \vec 0 &  \vec K_N^{(2)} & \dots & \vec 0 \\
     \vdots & \vdots & \ddots & \vdots \\
     \vec 0& \vec 0 & \dots & \vec K_N^{(P)} \end{bmatrix}.
 \end{equation}
  We may then solve the global system \eqref{ressyst} in the
  least-squares sense, with the right-hand side given by $\vec b =\vec K_M\vec U$, for $\vec U\in\R^{6P}$ the stacked vector of given translational and angular velocity data. 
 After solution, the desired forces and torques in the stacked vector $\vec F$ can then extracted directly from $\vec\lambda$ using 
 \begin{equation}\label{force_def}
     \vec F = \vec K_N^T\vec\lambda,
 \end{equation}
 analogously to \eqref{qkgauss}, which avoids evaluation of any surface traction integrals.
 Finally, should the traction function be needed on surfaces, it may be determined from \eqref{traction}, via \eqref{repS} and using the pressure solution given by 
 \begin{equation}
     p(\vec x) = \frac{1}{8\pi}\sum_{k=1}^P\sum_{j=1}^N\vec \Pi(\vec x,\vec y_j^{(k)})\cdot\vec\lambda_j^{(k)},
 \end{equation}
 using the standard vector-valued pressure fundamental solution $\vec \Pi(\vec x,\vec y) = 2\dfrac{(\vec x-\vec y)}{\|\vec x-\vec y\|^3}$.

\subsection{One-body preconditioning for large-scale Dirichlet problems}
\label{solving_res}
The full least-squares
systems \eqref{2sys} and \eqref{ressyst} are large, dense, and ill-conditioned.
Here, we describe how they can be preconditioned to allow for an accelerated iterative
solution. We start with the capacitance problem \eqref{2sys}.
Let $\vec S^{(kk)} = \vecmc U^{(k)} \vec\Sigma^{(k)}{\vecmc V^{(k)}}^T$ be the singular value decomposition (SVD) of the self-interaction matrix of particle $k$,
with $\vec\Sigma^{(k)}$ the diagonal matrix with entries the singular values $\sigma_1^{(k)}\ge \sigma_2^{(k)}\ge \dots \sigma_N^{(k)}$.
Then, the solution operator to its one-body linear system
$\vec S^{(kk)} \bal^{(k)} = \vec\eta^{(k)}$
may be approximated using the pseudo-inverse,
\begin{equation}\label{pseudo}
\bal^{(k)} \approx {\vec S^{(kk)}}^+ \vec\eta^{(k)} \coloneqq \vecmc V^{(k)} {\vec \Sigma^{(k)}}^+ \bigl({\vecmc U^{(k)}}^T \vec\eta^{(k)}\bigr),
\end{equation}
where $\vec\eta^{(k)}$ is the Dirichlet data vector,
while ${\vec \Sigma^{(kk)}}^+$ is the diagonal matrix with entries $1/\sigma_j^{(k)}$ when $\sigma_j^{(k)} > \sigma_1^{(k)}\eps_{\text{trunc}}$, or zero otherwise.
Typically, one sets $\eps_{\text{trunc}}$ smaller than the desired error, but, for stability, somewhat larger than machine precision.
Note that, due to catastrophic cancellation, ${\vec S^{(kk)}}^+$ cannot be stably formed then applied as a matrix. Rather, the two-step application in the final expression in \eqref{pseudo} is needed for numerical stability \cite{TrefethenBau,Lai2015,Malhotra2015,Stein2022,Parolin2022}, and is implied whenever we
write ${\vec S^{(kk)}}^+$ in what follows.

We {\em right-precondition} with ${\vec S^{(kk)}}^+$ in the following way \cite{Liu2016,Broms2024}: 
let the new unknowns (which represent surface potentials)
be given by the vectors $\bga\ok = \vec S^{(kk)} \bal\ok \in \mathbb R^M$,
for $k=1,\dots,P$.
To find the system that $\bga\ok$ satisfies,
one substitutes
\begin{equation}
    \bal\ok = {\vec S^{(kk)}}^+\bga\ok, \quad k=1,\dots,P,
    \label{reconbal}
\end{equation}
into the original system \eqref{2sys},
which would result in each diagonal block becoming
$\vec S^{(kk)} {\vec S^{(kk)}}^+$.
Yet the tall (rectangular) nature of $\vec S^{(kk)}$ brings about a twist: $\vec S^{(kk)} {\vec S^{(kk)}}^+\in \R^{M\times M}$ must be strictly rank-deficient, since $N<M$,
which would be very far from creating a well-conditioned system!
Thus, a key step to make the formulation well-conditioned is to replace all diagonal blocks $\vec S^{(kk)} {\vec S^{(kk)}}^+$ by $\vec I$, the $M\times M$ identity matrix.
This is expected to maintain solution accuracy because ${\vec S^{(kk)}}^+$ is presumed to give an accurate set of proxy-strengths for the one-body problem, for all $\bga\ok$ in the subspace of smooth surface vectors,
and the use of $\vec I$ merely enforces good conditioning also in the (irrelevant) complementary subspace. 
Summarizing the above, the preconditioned version of \eqref{2sys} is
\be
\begin{bmatrix}
			\vec I & \vec S^{(12)} {\vec S^{(22)}}^+& \dots & \vec S^{(1P)}{\vec S^{(PP)}}^+ \\
			\vec S^{(21)}{\vec S^{(11)}}^+ & \vec I & \hdots & \hdots \\
			\vdots & \vdots & \ddots & \vdots \\
			\vec S^{(P1)}{\vec S^{(11)}}^+ & \hdots & \hdots & \vec I
		\end{bmatrix} 
  \begin{bmatrix}
      \vec \gamma^{(1)} \\ \vec\gamma^{(2)} \\ \vdots \\ \vec\gamma^{(P)}
  \end{bmatrix} =   \begin{bmatrix}
      \vec \eta^{(1)} \\ \vec\eta^{(2)} \\ \vdots \\ \vec\eta^{(P)}
  \end{bmatrix}.
\label{2sysprec}
\ee


We propose to solve \eqref{2sysprec} iteratively (using, for instance, GMRES), to get the stacked surface value vector
$\bga \coloneqq \{\bga\ok\}_{k=1}^P$. This is outlined in Algorithm \ref{alg:cap}.
For large-scale problems ($PM \gg 10^4$) the
square matrix-vector multiply needed in each iteration requires three steps, where step 2 uses the FMM, as
explained in the function \texttt{MATVEC} in Algorithm \ref{alg:cap}. Finally, from the proxy strengths, one can extract the charges via \eqref{qkgauss}, or use \eqref{rep_short} to evaluate the solution anywhere in the domain, including accurately on boundaries.





\begin{algorithm}
 \algrenewcommand\algorithmiccomment[2][\footnotesize]{{#1\hfill\(\triangleright\) #2}}  
 \algnewcommand{\LeftComment}[1]{\Statex \(\triangleright\) #1}
 
\caption{Fast capacitance solve with one-body preconditioning}\label{alg:cap}
\textbf{Global data:} Proxy point sets $\{ \vec X^{(k)} \}_{k=1}^P$, collocation point sets $\{ \vec Y^{(k)} \}_{k=1}^P$
\begin{algorithmic}

\Function{solve}{$\vec \eta$}
    \State \textbf{Input:} Stacked right-hand side surface voltage data vector $\vec \eta$, 
    \State \textbf{Output:} Stacked proxy strength (charge) vector $\bal$.

\LeftComment{Solve for stacked surface values $\bga$ using matrix-vector multiply function defined below:}
    \State $\bga \gets \mathrm{GMRES}(\texttt{MATVEC}, \vec\eta)$
    \LeftComment{Recover proxy strengths for each body via local pseudo-inverse apply:}
    \For{$k = 1$ to $P$}
     
        \State \hskip1em $\bal^{(k)} \gets 
        \vecmc V^{(k)} {\vec \Sigma^{(k)}}^+ ({\vecmc U^{(k)}}^T \vec\gamma^{(k)})$
        \Comment{see eqn.~\eqref{pseudo}}
    \EndFor
    \State \Return $\bal$
\EndFunction

\vspace{1em}

\Function{matvec}{$\vec \bga$}
    \State \textbf{Input:} Stacked surface values vector $\vec \gamma = \{ \bga^{(k)}\}_{k=1}^P$, 
    \State \textbf{Output:} Stacked surface potential vector $\vec u = \{ \vec u^{(k)} \}_{k=1}^P$.
    \LeftComment{Step 1: Recover proxy source strengths for each body via local pseudo-inverse apply:}
    \For{$k = 1$ to $P$}
        \State \hskip1em $\hat \bal^{(k)} \gets 
        \vecmc V^{(k)} {\vec \Sigma^{(k)}}^+ ({\vecmc U^{(k)}}^T \vec\gamma^{(k)})$ 
    \EndFor
    \LeftComment{Step 2: Fast potential evaluation at all $PM$ targets from all $PN$ sources:}
    \State \hskip1em $\vec u \gets \texttt{FMM\_evaluate}(\{ \vec X^{(k)} \},\{ \vec Y^{(k)} \}, \{\hat{\bal}^{(k)} \})$ \Comment{applies bare MFS matrix from \eqref{2sys}}
    \LeftComment{Step 3: Locally correct to convert to identity diagonal blocks in \eqref{2sysprec}:}
    \For{$k = 1$ to $P$}
        \State \hskip1em $\vec u^{(k)} \gets \vec u^{(k)} - \vec S^{(kk)} \hat{\bal}^{(k)}$ \Comment{subtract local self-contribution}
        \State \hskip1em $\vec u^{(k)} \gets \vec u^{(k)} + \vec\gamma^{(k)}$
        \Comment{add back original input}
    \EndFor
    \State \Return $\vec u$
\EndFunction
\end{algorithmic}
\end{algorithm}


The generalization of the above to the Stokes resistance problem
is straightforward. The diagonal blocks now have size $3M \times 3M$, and the matrix-vector multiply needs an FMM which applies the Stokeslet to vector source strengths. Upon convergence, $\vec\gamma^{(k)}\in\mathbb R^{3M}$ are mapped to $\vec\lambda^{(k)}\in\mathbb R^{3N}$ for $k = 1,\dots, P$, analogously to \eqref{reconbal}, and $\vec\lambda$ is then used to compute forces and torques via 
\eqref{force_def}, to evaluate the fluid flow anywhere in the exterior domain $\mathbb R^3\backslash\overline{\Omega}$, or evaluate the surface traction function.
\begin{remark}[Exploiting congruent or resized particles.]\label{congruent}
In the case where all proxy and collocation point sets are translates of those for a single body, the self-interaction matrices ${\vec S^{(kk)}}$ are identical for all $k$ (see \cite{Broms2024} for the sphere case). More generally, for congruent non-spherical particles with different orientations, or for resized copies of a given shape, a single SVD per unique shape suffices to determine the action of the self-interaction pseudo-inverses ${\vec S^{(kk)}}^+$ required for preconditioning.
In the case of spatial rescaling of a particle (and its discretization) by a factor, then ${\vec S^{(kk)}}^+$ is simply multiplied by this factor.
Rotations need a little more bookkeeping:
let a base particle at the origin with a given reference orientation be discretized by source points stacked in $\vec Y^{(0)}\in\mathbb R^{3N}$ and collocation points stacked in $\vec X^{(0)}\in\mathbb R^{3M}$. Then, a particle with a general orientation described by the rotation matrix $\vecmc R^{(k)}\in\mathbb R^{3\times 3}$ is discretized by $\vec X^{(k)} = \vecmc R^{(k)}_N\vec X^{(0)}$ and $\vec Y^{(k)} = \vecmc R^{(k)}_M\vec Y^{(0)}$. Hence, \begin{equation}\label{Krot}
    \vec K_N^{(k)} = \vec R_N^{(k)}\vec K_N^{(0)}\vec R_2^{(k)} \text{ and } \vec K_M^{(k)} = \vec R_N^{(k)}\vec K_M^{(0)}\vec R_2^{(k)}.
\end{equation} Here, $\vecmc R_N^{(k)}\in\mathbb R^{3N\times 3N}$, $\vecmc R_M^{(k)}\in\mathbb R^{3M \times 3M}$ and $\vecmc R_2^{(k)}\in\mathbb R^{6 \times 6}$ are  matrices with the small matrices $\vecmc R^{(k)}$ in their diagonal blocks. Given $\vec B = \vecmc U^{(0)}\vec\Sigma^{(0)} {\vecmc V^{(0)}}^T$, a factorization of the system matrix for the base particle, $\vec\lambda^{(k)} = {\vec S^{(kk)}}^+\vec\gamma^{(k)}$ is then determined as 
		\begin{equation}
			\vec\lambda^{(k)} = \vecmc R_N^{(k)}\vecmc V^{(0)}{\vec\Sigma^{(0)}}^{+}\biggl({\vecmc U^{(0)}}^T\bigl({\vecmc R_M^{(k)}}^T\vec \gamma^{(k)}\bigr)\biggr).
		\end{equation}
	\end{remark}


\section{Formulations for elastance and mobility}\label{new_formulations}

We now turn to the main contribution:
well-conditioned MFS elastance and mobility formulations free from constraints associated with the given net particle quantities.
We start with the simpler elastance case in Section \ref{elastance}, proceed to mobility
in Section \ref{mobility},
and then explain how 
the resulting systems are effectively solved with one-body preconditioning in Section \ref{one_body_revisit}.

\subsection{Elastance formulation}\label{elastance}
We explain first the case $P=1$ for simplicity.
Let $\vec L = \frac{1}{N}\mbf{1}\mbf{1}^T$ be the $N\times N$ matrix
with all entries $1/N$. It is the orthogonal projector
onto the constant vectors in $\R^N$,
while $\vec I-\vec L$ is the orthogonal projector onto the complement space. 
Recalling the representation notation \eqref{rep}-\eqref{rep_short},
we set up a ``completion potential'' $\SR^{(1)}\bal_0^{(1)}$
where $\bal_0^{(1)}\in\R^{N}$ is a known strength vector
designed to impart the desired net charge $q^{(1)}$ to the body.
The simplest such choice
is the constant vector $\vec \alpha_0^{(1)} = \frac{1}{N}q^{(1)}\vec 1_N$.
In terms of an unknown source vector $\bal^{(1)}\in\R^{N}$,
our proposed representation is
\be
u = \SR^{(1)} (\vec I-\vec L)\bal^{(1)} + \SR^{(1)}\bal_0^{(1)} \qqqquad \mbox{($P=1$ case)}.
\label{erep1}
\ee
We have here projected out the non-zero mean part of $\bal^{(1)}$ so that the
first term cannot change the net body charge (a property reminiscent of a double-layer potential), so that the net charge is $q^{(1)}$ by construction.
Inserting \eqref{erep1} into the boundary condition \eqref{vk}
and enforcing this at all collocation nodes gives
\be
\vec S^{(11)}(\vec I-\vec L)\bal^{(1)} + \vec S^{(11)}\bal_0^{(1)} = \vec\eta^{(1)}
,
\label{esys1}
\ee
where $\vec S^{(11)}$ is the MFS matrix, and again $
\vec\eta^{(1)}= \phi^{(1)}\mbf{1}_M$ some constant
vector, as in \eqref{1sys}, but now with unknown constant.
The system is closed by choosing a representation for $\phi^{(1)}$ 
in terms of
$\bal^{(1)}$. The subspace Span$\{\mbf{1}_N\}$ is available for this, it
having no effect on the first term $\vec S^{(11)}(\vec I-\vec L)\bal^{(1)}$. 
Hence, we let $\vec L_r\in\mathbb R^{M\times N}$ be the rectangular matrix with all entries $1/N$, and  make the ansatz
\be
\vec\eta^{(1)}= -\vec L_r\bal^{(1)}.
\label{ansatz}
\ee
This is inspired by ideas in Stokes mobility (see, e.g., \cite[Sec.~4.2]{malhotra2023}).
Substituting this into \eqref{esys1} gives the linear system
\[
\bigl[ \vec S^{(11)}(\vec I-\vec L) + \vec L_r \bigr] \bal^{(1)} = -\vec S^{(11)}\bal_0^{(1)}.
\]
Note that we do not solve for the constant voltage $\phi^{(1)}$ directly; however, it can easily be extracted from any row of \eqref{ansatz}, to give $\phi^{(1)} = -\frac{1}{N}\mbf{1}^T\bal^{(1)}$.


The case for a general number of bodies $P\ge 1$ is now mostly a matter of notation.
The representation is
\be
u = \sum_{k=1}^P \SR\ok \bigl[ (\vec I-\vec L)\bal\ok + \bal_0\ok \bigr],
\label{erepK}
\ee
recalling that $\SR\ok$ is the MFS charge representation from body $k$.
Here the completion flow is constructed with each block of the vector $\vec \alpha_0\in\mathbb R^{NP}$ constant,
\be
\bal_0\ok = \frac{q^{(k)}}{N}\mbf{1}_N, \qquad k=1,\dots,P.
\label{al0}
\ee
The following summarizes the formulation,
and verifies that it solves the elastance problem in the case of exact solution of the linear system.
\begin{pro}[Elastance formulation.]   
\label{p:elast}
Let $\bal\in\R^{PN}$ solve the formally overdetermined linear system with block rows
  \be
     [\vec S^{(kk)}(\vec I-\vec L) + \vec L_r] \bal\ok + \sum_{k'\neq k} \vec S^{(kk')}(\vec I-\vec L)\bal\okp = -\mbf{u}\ok_0
     ,\quad k=1,\dots,P,
  \label{esysK}
  \ee
  where, recalling $\bal_0$ defined by \eqref{al0}, the right-hand side vector has block entries
  \be
  \mbf{u}\ok_0 = \sum_{k'=1}^P \vec S^{(kk')} \bal\okp_0, \qquad k=1,\dots,P.
  \label{erhsK}
  \ee
  Then the potential $u$ given by the representation \eqref{erepK} 
  is harmonic in $\E$, constant
  on the collocation nodes for each body, and has the
  desired net charges \eqref{qkgauss}.
  The constant voltages may be read off as the negative mean strengths
  \be
  \phi^{(k)} = -\frac{1}{N}\sum_{j=1}^N \alpha\ok_j, \qquad k=1,\dots,P.
  \label{vkal}
  \ee
\end{pro}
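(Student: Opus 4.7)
The plan is to verify the three claimed properties (harmonicity, constancy at collocation nodes, and correct net charges) together with the voltage-readoff formula \eqref{vkal}, each following from elementary algebra exploiting the projector structure of $\vec L$ and the rectangular mean-matrix $\vec L_r$.

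First, harmonicity of $u$ on $\E$ is immediate from the representation \eqref{erepK}: the right-hand side is a linear combination of functions $G(\cdot, \y\ok_j)$ with sources $\y\ok_j \in \Omega^{(k)}$, each of which is harmonic on the exterior $\E$.

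Next, to show $u$ takes a constant value on the collocation nodes of body $k$, I would evaluate \eqref{erepK} at those $M$ nodes, obtaining the block row $\sum_{k'=1}^P \vec S^{(kk')}\bigl[(\vec I-\vec L)\bal\okp + \bal\okp_0\bigr]$. Using \eqref{erhsK} to rewrite the sum over the completion terms as $\mbf{u}\ok_0$, and then using \eqref{esysK} to substitute for $\sum_{k'}\vec S^{(kk')}(\vec I-\vec L)\bal\okp$, this collapses to $-\vec L_r\bal\ok$. Since every row of $\vec L_r$ equals $(1/N)\mbf{1}^T$, the resulting $M$-vector is constant with each entry equal to $-\frac{1}{N}\mbf{1}^T\bal\ok$. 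This simultaneously verifies the constancy on $\pO^{(k)}$ and the readoff formula \eqref{vkal}.

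Finally, for the net-charge claim, I would invoke the Gauss-law identity \eqref{qkgauss}: the net charge on body $k$ equals the sum of all proxy strengths residing in that body, namely $\mbf{1}^T\bigl[(\vec I-\vec L)\bal\ok + \bal\ok_0\bigr]$. The single algebraic fact that makes the whole construction work is that $\mbf{1}^T\vec L = \mbf{1}^T$, which follows from $\mbf{1}^T\mbf{1} = N$ and $\vec L = (1/N)\mbf{1}\mbf{1}^T$; hence $\mbf{1}^T(\vec I-\vec L)=\vec 0^T$, and the first term vanishes irrespective of $\bal\ok$. The completion term then contributes exactly $\mbf{1}^T\bal\ok_0 = q^{(k)}$ by the choice \eqref{al0}, giving the desired net charge.

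I do not anticipate any real obstacle; the argument is purely a verification. The one place demanding a little care is to distinguish the square projector $\vec L\in\R^{N\times N}$ (used to annihilate net charge on the source side) from the rectangular mean $\vec L_r\in\R^{M\times N}$ (used to close the system with the unknown voltage), and to correctly pair each with the appropriate $\mbf{1}^T$ identity so that the same mean-zero subspace is used consistently on both sides.
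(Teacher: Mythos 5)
Your proposal is correct and follows essentially the same route as the paper's proof: evaluate the representation at the collocation nodes, subtract the block row \eqref{esysK} to collapse the result to $-\vec L_r\bal\ok$, giving both constancy and \eqref{vkal}, with harmonicity immediate and the net charges verified via \eqref{qkgauss} and the choice \eqref{al0}. Your explicit use of $\mbf{1}^T(\vec I-\vec L)=\vec 0^T$ merely spells out what the paper summarizes as ``correct charges by construction,'' so there is no substantive difference.
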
   
\begin{proof}
  Replacing $k$ by $k'$ in the representation \eqref{erepK},
  then evaluating it on the collocation nodes of the $k$th body,
  gives the vector of potentials
  \[
  \{u(\x\ok_i)\}_{i=1}^M =
  \sum_{k'=1}^P \vec S^{(kk')}\bigl[  (\vec I-\vec L)\bal\okp + \bal\okp_0 \bigr],
  \qquad k=1,\dots, P.
  \]
  Subtracting the equation \eqref{esysK} leaves $\{u(\x\ok_i)\}_{i=1}^M = -\vec L_r^{(k)} \bal\ok$,
  which shows that the potential has the same value in all collocation points on the $k$th body,
  and that this constant value is given by \eqref{vkal}. Since it is a sum of fundamental solutions,
  $u$ is harmonic,
  and has the correct charges by construction from \eqref{al0}.
\end{proof}


Note that precomputing the right hand side $\vec u_0 := \{\vec u_0\ok\}_{k=1}^P$
via \eqref{erhsK} can be performed with a single FMM call.
The global system \eqref{esysK} is no less ill-conditioned than \eqref{2sys} for the
Dirichlet problem.

\begin{remark}[Other types of MFS sources.]
The inquisitive reader may wonder: why not instead use double-layer sources for the MFS, obviating the projector $\vec I-\vec L$ and allowing point-like completion sources
as in \cite{PowerMiranda,Pozrikidis1992}?
While this would work, it has two disadvantages over what we propose:
1) it is more cumbersome, requiring two source types, including double-layer sources which are more singular and more expensive to evaluate; and
2) the MFS may also accurately handle close-to-touching lubrication interactions
through admixtures of Stokeslets, rotlets, and doublets \cite{Broms2024}, which generate nonzero force and torque, thus would require the
re-insertion of a projector in any case.
We plan to pursue point 2) for the mobility problem in future work.
\end{remark}

\begin{remark}[Elastance in 2D.]
The 2D case for elastance is more complicated, due to additional constraints
on the total charge, and a more subtle asymptotic form as $|\x|\to\infty$; see \cite{Rachh2016}. We stick to 3D in this work.
\end{remark}

\subsection{Mobility formulation}\label{mobility}

Our proposal is the Stokes generalization of the previous subsection.
The rank-1 single-body projector $\vec L^{(k)}$ now becomes rank-6, projects onto the space of rigid body motions, and is obtained by 
\begin{equation}
    \vec L^{(k)} = \vec K_N^{(k)}\left({\vec K_N^{(k)}}^T\vec K_N^{(k)}\right)^{-1}{\vec K_N^{(k)}}^T.
\end{equation}
We also need a recipe (following \cite{malhotra2023}) for a proxy strength vector $\vec\lambda_0\ok$ that produces the given net forces $\vec f\ok$ and torques $\vec t\ok$ for the $k$th particle.
Analogously to the constant $\bal_0\ok$ given by \eqref{al0} in the elastance case, we restrict
$\vec\lambda_0\ok$ to be in the rigid body space on the proxy points, meaning
\be
\vec\lambda_0\ok = \vec K_N\ok\begin{bmatrix} \vec \nu\ok \\ \vec \xi\ok\end{bmatrix}
\label{lambda01}
\ee
for some rigid body velocity $\vec\nu\ok$ and angular velocity $\vec\xi\ok$.
Inserting this into \eqref{force_syst} gives a $6\times6$ linear system to solve
for each particle,
\begin{equation}
\label{lambda02}
		{\vec K_N\ok}^T\vec K_N\ok
		\begin{bmatrix}
			\vec \nu\ok \\ \vec \xi\ok\end{bmatrix} = \begin{bmatrix} \vec f\ok
   \\ \vec t\ok\end{bmatrix}.
\end{equation}
The resulting $\vec\lambda_0^{(k)}$ in fact solves the minimization problem 
\begin{equation}
    \min\limits_{\vec\mu}\,\|\vec \mu\|^2_2, 
    \text{ s.t.}\medspace{\vec K_N^{(k)}}^T\vec\mu = \begin{bmatrix}
        \vec f^{(k)} \\ \vec t^{(k)}
    \end{bmatrix}.
\end{equation}
With $\vec\lambda_0\ok$ determined for each particle $k$,
the flow representation is
 \begin{equation}\label{multirep}
     \vec u = \sum_{k=1}^P\SR^{(k)}\left[\left(\vec I-\vec L^{(k)}\right)\vec\lambda^{(k)}+\vec\lambda_0^{(k)}\right],
 \end{equation}
 recalling that $\SR\ok$ denotes the $k$th particle MFS Stokeslet representation
 \begin{equation}
     \bigl(\SR^{(k)}\vec\beta\bigr)(\vec x)
     \;\coloneqq\;
     \sum_{j=1}^N\vec G(\vec x,\vec y_j^{(k)})\vec\beta_j,
 \end{equation}
 for any MFS strength vector $\vec\beta$ formed by stacking $\vec\beta_j\in\mathbb R^3$
 for $j=1,\dots,N$.
 Because the projectors $(\vec I-\vec L^{(k)})$ remove net force and torque from each body, 
 \eqref{multirep} generates a flow with the desired net forces and torques.
Enforcing that this flow $\vec u$ is an unknown
rigid body motion on the $k$th particle surface
collocation nodes gives
\be
\sum_{k'=1}^P \vec S^{(kk')}\left[\left(\vec I-\vec L^{(k)}\right)\vec\lambda^{(k')}+\vec\lambda_0^{(k')}\right] \; = \;
\vec K_M\ok \vec U\ok,\qquad
k=1,\dots,P.
\label{stocol}
\ee
By analogy with the elastance case \eqref{ansatz}, we close the linear system via an ansatz that
the $k$th body unknown rigid body motion is controlled by the (thus far unused) rigid body subspace of its Stokeslet coefficients: $\vec U\ok = -{\vec K_N\ok}^T\vec\lambda\ok$.
The right hand side of \eqref{stocol} then becomes $-\vec L_r\ok \vec\lambda\ok$, where
\be
\vec L_r\ok \;:=\;
\vec K_M\ok {\vec K_N\ok}^T
\; \in \R^{3M\times 3N}, \qquad k=1,\dots, P
\label{Lrsto}
\ee
is a rank-6 rectangular rigid body coupling matrix for each particle.

Rearranging \eqref{stocol}, the formulation and its solution of the mobility problem is summarized in the following.
 \begin{pro}[Mobility formulation.]
 Let $\vec\lambda:=\{\vec\lambda\ok\}_{k=1}^P$ solve the tall linear system with block rows 
 \begin{equation}
     \left[\vec S^{(kk)}(\vec I-\vec L^{(k)})+\vec L_r^{(k)}\right]\vec\lambda^{(k)}+\sum\limits_{k\neq k'}\vec S^{(kk')}\left(\vec I-\vec L^{(k)}\right)\vec\lambda^{(k')} = -\vec u_0^{(k)},\quad k = 1,\dots,P,
 \end{equation}
 where, in terms of \eqref{lambda01}--\eqref{lambda02}, the right hand side has block components
 \begin{equation}
     \vec u_0^{(k)} = \sum\limits_{k' = 1}^P\vec S^{(kk')}\vec\lambda_0^{(k')},
     \qquad k=1,\dots,P.
     \label{stou0}
     \end{equation}
     Then, the flow field $\vec u$ given by \eqref{multirep} satisfies the PDE \eqref{stokeseq} in $\mathbb R^3\backslash\overline{\Omega}$, obeys rigid body motion on each particle surface, and corresponds to the desired net forces and torques in \eqref{force_eq} on the particles.  The stack of all rigid body motions may be extracted via
     \begin{equation}
         \vec U = -\vec K_N^T\vec\lambda,
         \label{getU}
     \end{equation}
     with $\vec K_N$ defined in \eqref{Kbig}.
 \end{pro}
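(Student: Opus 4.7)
The plan is to mirror the proof of Proposition~\ref{p:elast}, promoting the rank-one constant projector to the rank-six rigid body motion projector on each particle. Four things must be checked: (i) $\vec u$ solves the Stokes PDE \eqref{stokeseq} in $\E$ with the stated decay; (ii) $\vec u$ restricted to each $\partial\Omega^{(k)}$ is a rigid body motion; (iii) the corresponding net forces and torques equal the prescribed $\vec f^{(k)}, \vec t^{(k)}$; and (iv) those rigid body velocities are exactly recovered by \eqref{getU}.

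Claim (i) is immediate: \eqref{multirep} is a finite superposition of Stokeslets with source points strictly inside the particles, so it solves the homogeneous Stokes equations in $\E$ and decays at infinity by linearity.

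For (ii) and (iv), I would evaluate the representation \eqref{multirep} at the collocation nodes of particle $k$ to get
\begin{equation}
\{\vec u(\vec x_i^{(k)})\}_{i=1}^M \;=\; \sum_{k'=1}^P \vec S^{(kk')}\bigl[(\vec I-\vec L^{(k')})\vec\lambda^{(k')}+\vec\lambda_0^{(k')}\bigr],
\end{equation}
then subtract the $k$th block row of the linear system, using \eqref{stou0} to absorb the $\vec\lambda_0$ contributions. After cancellation, only the term $-\vec L_r^{(k)}\vec\lambda^{(k)}$ remains on the right. The factorization \eqref{Lrsto} rewrites this as $\vec K_M^{(k)}\vec U^{(k)}$ with $\vec U^{(k)} := -{\vec K_N^{(k)}}^T\vec\lambda^{(k)}$, which by \eqref{Kdef} is precisely the rigid body motion with translational and angular velocities given by the six entries of $\vec U^{(k)}$. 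Stacking over $k$ gives (ii) and (iv) simultaneously.

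For (iii), the effective Stokeslet strength used for particle $k$ in \eqref{multirep} is $\vec\lambda_{\mathrm{tot}}^{(k)} := (\vec I-\vec L^{(k)})\vec\lambda^{(k)} + \vec\lambda_0^{(k)}$, so by \eqref{force_def} the net force and torque are ${\vec K_N^{(k)}}^T\vec\lambda_{\mathrm{tot}}^{(k)}$. Since $\vec L^{(k)}$ is the orthogonal projector onto the range of $\vec K_N^{(k)}$, the identity ${\vec K_N^{(k)}}^T(\vec I-\vec L^{(k)})=\vec 0$ kills the first term, while \eqref{lambda01}--\eqref{lambda02} guarantee that ${\vec K_N^{(k)}}^T\vec\lambda_0^{(k)}$ equals $[\vec f^{(k)T},\vec t^{(k)T}]^T$. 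The only conceptual content beyond the elastance proof is recognizing the complementary roles of the two pieces: the projector $\vec L^{(k)}$ removes the rigid body motion subspace from the first term of \eqref{multirep} so that it cannot carry net force or torque, while that same unused subspace of proxy strengths is precisely what the ansatz $\vec U^{(k)} = -{\vec K_N^{(k)}}^T\vec\lambda^{(k)}$ exploits to encode the unknown body velocities. Once this duality is in hand, the remaining manipulations are routine.
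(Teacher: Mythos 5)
Your proof is correct and follows exactly the route the paper intends: the paper's own proof is just the one-line remark that the argument is ``exactly analogous'' to that of Proposition~\ref{p:elast}, and you have written out precisely that analogy (evaluate \eqref{multirep} at the collocation nodes, subtract the block row to leave $-\vec L_r^{(k)}\vec\lambda^{(k)} = \vec K_M^{(k)}\vec U^{(k)}$, and use ${\vec K_N^{(k)}}^T(\vec I-\vec L^{(k)})=\vec 0$ together with \eqref{lambda01}--\eqref{lambda02} for the net force and torque). No gaps; your version is simply the fully detailed form of the paper's argument.
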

  \begin{proof}
      The proof is exactly analogous to that of Proposition~\ref{p:elast}.
  \end{proof}

The above global rectangular linear system is ill-conditioned, as with the resistance MFS formulation, but is amenable to one-body preconditioning, as explained in the next subsection.

 \begin{remark}[Relation to saddle-point systems for mobility.]
A straightforward approach to the mobility problem is to solve $\vec S\vec\lambda = \vec K_M\vec U$ in the least-squares sense, subject to the constraints $\vec K_N\vec\lambda = \vec F$, with unknown rigid body velocities $\vec U$, but known forces and torques $\vec F$.  In saddle point form, this can
for the case $P=1$ be written as
		\begin{equation}\label{saddle_point}
			\begin{bmatrix}
				\vec S^{(11)} & -\vec K_M^{(1)} \\ -{\vec K_N^{(1)}}^T & \vec 0
			\end{bmatrix}\begin{bmatrix} \vec\lambda^{(1)} \\ \vec U^{(1)}  \end{bmatrix}= \begin{bmatrix} \vec 0 \\ -\vec F^{(1)} \end{bmatrix},
		\end{equation}
		 and the mobility matrix $\vec M$ giving $\vec M\vec F^{(1)} = \vec U^{(1)}$ can be solved via the Schur complement $$\vec M = \left({\vec K^{(1)}}^T_N{\vec S^{(11)} }^{+}\vec K^{(1)}_M\right)^{-1}.$$ 
   However, when it comes to the multi-particle $P\gg 1$ case, this does not scale, and we are unaware of a fast algorithm. 
   Yet, the above recompleted formulation has similarities to the null space method in the literature on least squares problems with inequality constraints \cite{Hanson1969,Scott2022} and saddle point problems \cite{Benzi2005}. In a saddle point problem of the form \eqref{saddle_point}, $\vec\lambda^{(1)}$ is then split into two components, one in the null space of ${\vec K_N^{(1)}}^T$ and one in the orthogonal complement, with the goal of solving a system of reduced size. However, that technique, in contrast to ours, is typically beneficial only when the number of constraints is large, and it exploits neither the projectors $\vec L\ok$ nor the relation
\eqref{getU} that we use. 
 \end{remark}

\subsection{One-body preconditioning for elastance and mobility}\label{one_body_revisit}

A dense direct least squares solution of the discrete systems introduced in Sections \ref{elastance} and \ref{mobility} would require $\bigO(P^3N^3)$ effort, which quickly becomes prohibitive as $N$ or $P$ is increased.
The elastance and mobility MFS linear systems may however be right-preconditioned by factorizing
the diagonal blocks, in a similar style to the Dirichlet BVP case from Section \ref{solving_res}, resulting in well-conditioned square linear systems amenable to iterative solution.
We now present the formulae, unifying the elastance and mobility cases. 
Let $\vec S_L^{(k)}$ denote the diagonal matrix block corresponding to particle $k$, that is,
\begin{equation}
    \vec S_L^{(k)} \coloneqq \vec S^{(kk)}(\vec I-\vec L^{(k)}) + \vec L_r^{(k)},
    \qquad k=1,\dots,P.
\end{equation}
For elastance, $\vec L_r^{(k)} = \vec L_r$ and $\vec L^{(k)} = \vec L$ for $k = 1,\dots,P$. The same is true if all particles in a mobility problem have the same shape and orientation; see \eqref{Krot} in Remark \ref{congruent}.
Furthermore, for the elastance problem, $\vec S_L^{(k)}\in \R^{M\times N}$, and  for $k=1,\dots,P$, $\bga\ok = \vec S^{(k)}_L \bal\ok \in \mathbb R^M$. For the mobility problem, $\vec S^{(k)}_L\in  \R^{3M\times 3N}$, and  $\bga\ok = \vec S_L^{(k)}\vec\lambda\ok \in \mathbb R^{3M}$,  for $k=1,\dots,P$.
For a general $P$, the resulting square linear system in both cases takes the form

\begin{equation}\label{2esysprec}
\begin{small}
    \begin{bmatrix}
        \vec I & \vec S^{(12)} \left(\vec I{-}\vec L^{(2)}\right){\vec S_L^{(2)}}^+ & \dots & \vec S^{(1P)} \left(\vec I{-}\vec L^{(P)}\right){\vec S_L^{(P)}}^+  \\
        \vec S^{(21)} \left(\vec I{-}\vec L^{(1)}\right){\vec S_L^{(1)}}^+ & \vec I & \dots & \dots \\
        \vdots & \vdots & \ddots & \vdots \\
        \vec S^{(P1)} \left(\vec I{-}\vec L^{(1)}\right){\vec S_L^{(1)}}^+ & \dots & \dots & \vec I
    \end{bmatrix}
    \begin{bmatrix}
        \vec \gamma^{(1)} \\  \vec \gamma^{(2)} \\ \vdots \\ \vec \gamma^{(P)}
        \end{bmatrix} = 
        \begin{bmatrix}
        -\vec u_0^{(1)} \\  -\vec u_0^{(2)} \\ \vdots \\ -\vec u_0^{(P)}
    \end{bmatrix},
    \end{small}
\end{equation}
recalling the right-hand side definition \eqref{erhsK} for elastance,
or \eqref{stou0} with $\vec\lambda_0\ok$ given by \eqref{lambda01}--\eqref{lambda02} for mobility.
In either case the right-hand side may be computed by a single FMM.
For each body, one first precomputes the two matrices that apply the pseudoinverse 
${\vec S_L^{(k)}}^+$ backward-stably, via the SVD of $\vec S_L^{(k)}$, as in \eqref{pseudo}. For congruent particles, a single SVD is required; rotations determine ${\vec S_L^{(k)}}^+$ from a base discretization as in Remark \ref{congruent}.
Then, one solves \eqref{2esysprec} iteratively for the stacked surface value vector
$\bga \coloneqq \{\bga\ok\}_{k=1}^P$, in analogy with Algorithm \ref{alg:cap} used for the Dirichlet BVPs. There are two sole differences here: 
\begin{enumerate}
    \item the form of the one-body blocks on the diagonal of the system to be preconditioned ($\vec S^{(kk)}$ versus $\vec S^{(k)}_L$), and, 
    \item the application of $\vec I-\vec L^{(k)}$  to each particle's block vector after Step 1 of the \texttt{matvec} function in Algorithm \ref{alg:cap}.
    \end{enumerate}
Upon convergence for elastance,  one recovers $\bal\ok = {\vec S_L^{(k)}}^+\bga\ok$ for each $k$,
finally reporting their averages $\phi^{(k)}$ via \eqref{vkal}. For mobility, $\vec\lambda^{(k)} = {\vec S_L^{(k)}}^+\vec\bga^{(k)}$, $k=1,\dots,P$, and velocities are finally computed via \eqref{getU}. The solution fields in $\mathbb R^3\backslash\overline{\Omega}$ can be recovered via the representations in \eqref{erepK} (for elastance) or \eqref{multirep} (for mobility). 

This completes the description of the preconditioned algorithms, including the option for FMM accelerated iterative solutions.

	\section{Numerical examples}\label{numerics}
 
 We now demonstrate the accuracy and efficiency of the new elastance and mobility formulations, for varying number of objects $P$, separations $\delta$ and discretizations $N$. Iterative solutions use GMRES, and, for the Stokes tests, are accelerated with the multithreaded FMM3D library through a \textsc{matlab} interface, with linear scaling in the number of source points \cite{ChengH.1999,FMM3D}.  The ratio of the smallest to largest singular values of the self-interaction matrices will turn out to be above $\epsilon_{\text{mach}}$ in double precision, so no truncation is needed in the one-body preconditioning schemes of Sections \ref{solving_res} and \ref{one_body_revisit}. 
 

We first study spherical particles in Section \ref{spheres}, where accuracy and the excellent conditioning of both the elastance and mobility solvers are demonstrated.   For Laplace (elastance) we test convergence only for small-scale problems, but for Stokes we also validate against a reference from a boundary integral equation equipped with quadrature by expansion (QBX) \cite{AfKlinteberg2016}. We then move beyond spheres for Stokes in Section \ref{ellipsoids} to consider clusters of many ellipsoids.


\subsection{Spheres}\label{spheres}

In applying the MFS it is convenient to use a quasi-uniform point distribution
on the sphere, both for collocation nodes (radius $R$) and for proxy points (radius $R_p$).
The reason is that the conditioning of the target-from-source matrix $\vec S^{(kk)}$,
already expected to be exponentially bad \cite[Rmk.~1]{Broms2024},
is further worsened by uneven distributions of source and collocation points, as
clustered nodes lead to more linearly dependent columns in the target-from-source matrix.
Following \cite{Broms2024}, we recommend so-called ``spherical design points'' \cite{sphdesign}
  (we have compared this to other sphere point distributions such as Fibonacci grids, but not found any distribution that exceeds spherical designs in efficiency). Note that spherical designs are only available for certain values of N \footnote{Spherical design nodes are available in double-precision accuracy for $N$ up to around 16000 at
		\url{https://web.maths.unsw.edu.au/~rsw/Sphere/EffSphDes/}.}. Throughout the numerical examples for spheres, we choose $M\approx 1.2N$, a ``rectangularity'' typical for MFS \cite{Barnett2007,Liu2016,Stein2022}.

\bfi       
\centering
\begin{subfigure}[t]{0.49\textwidth}
			\centering
					\hspace*{-5ex}\includegraphics[trim = {0cm 0cm 0cm 0cm},clip,width=1.1\textwidth]{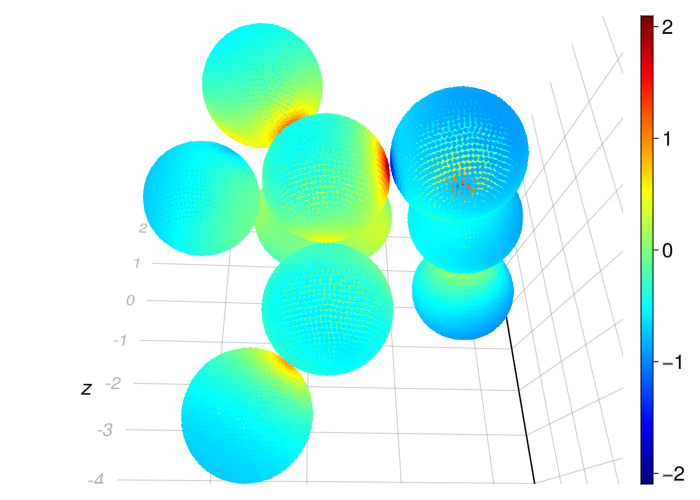}
			\caption{ }		
		\end{subfigure}~
  \begin{subfigure}[t]{0.49\textwidth}
			\centering
   \hspace*{-3ex}
					\includegraphics[trim = {0cm 0cm 0cm 0cm},clip,width=1.1\textwidth]{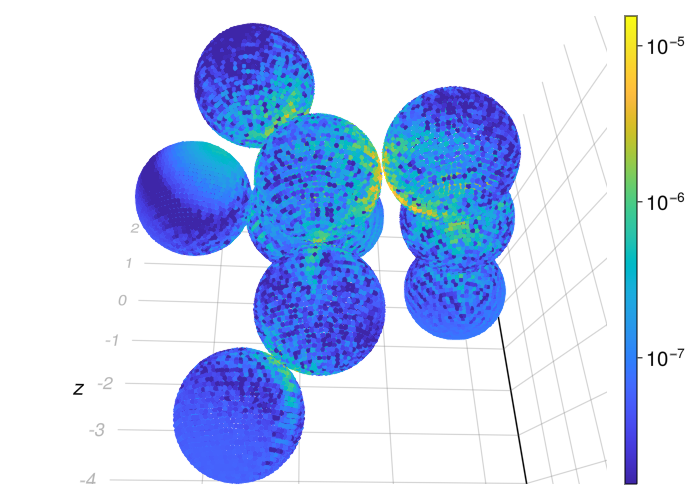}
			\caption{ }
		\end{subfigure}
\ca{Elastance calculation with $P=10$ unit spheres, with many pairs achieving the minimum separation
  $\delta=0.1$. The MFS has $N=969$ unknowns per sphere and $R_p=0.7$. This gives 5 digits of accuracy in the maximum relative boundary condition error (Section \ref{s:elastres}). In panel (a), the charge density $u_n$ is
  shown on a new denser set of $2PM=22460$ surface test points.
  In panel (b), the absolute residual in satisfying constant boundary values is visualized on log color scale, on these same test points.
  }{f:elast}
\efi

\begin{figure}     
\centering
 \begin{subfigure}[t]{0.49\textwidth}
\ig{width=2.65in,trim = {0.5cm 0cm 0.2cm 0.3cm},clip}{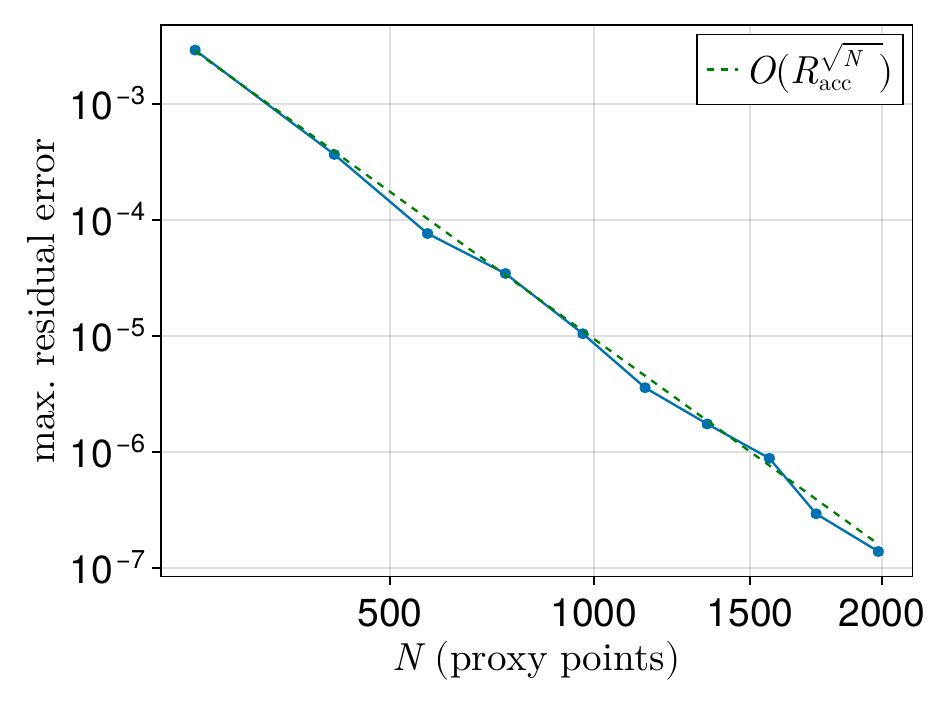} 
\caption{}
\end{subfigure}
 \begin{subfigure}[t]{0.49\textwidth}
\includegraphics[width=2.6in,trim = {0.5cm 0cm 0.3cm 0.3cm},clip]{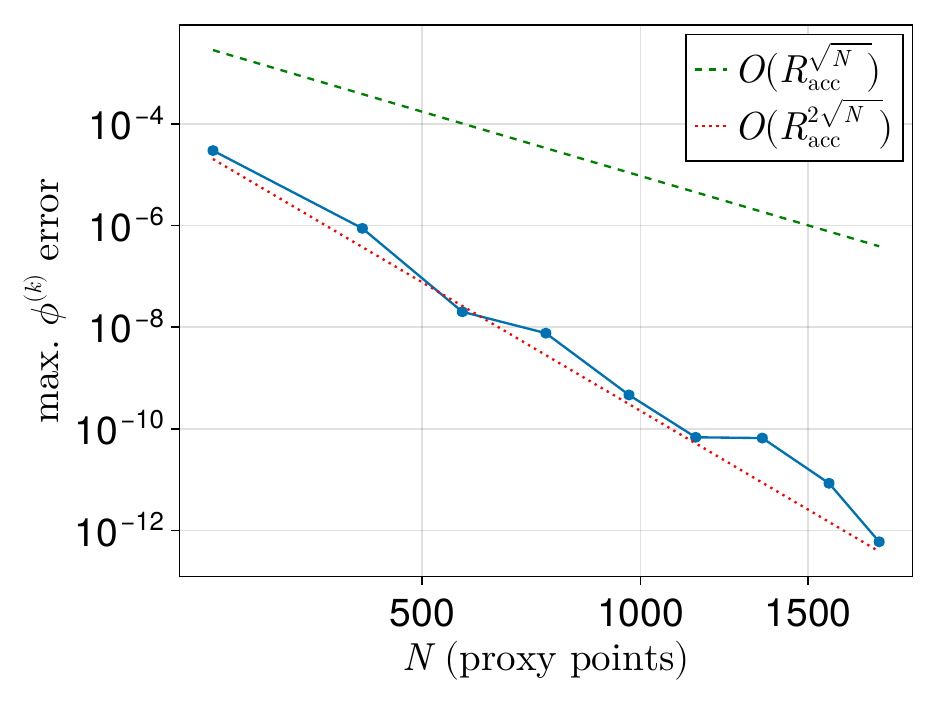}
\caption{}
\end{subfigure}
\caption{Convergence with respect to $N$, the number of proxy-points, for the elastance calculation for the geometry shown in Fig.~\ref{f:elast}.
  Panel (a) shows the maximum residual over the surface
  of all spheres, i.e., $\max_{k=1,\dots,P} \|u-\phi^{(k)}\|_{L^\infty(\pO^{(k)})}$.
Panel (b) shows convergence of the maximum error in the computed potentials
$\phi^{(k)}$ (estimated using converged results computed at $N=1986$ as a reference).  
  The number of collocation nodes was chosen as $M\approx 1.2 N$, fixing $R_p=0.7$.
  Root-exponential convergence is indicated as a dashed straight line (our horizontal axis being linear in $\sqrt{N}$), for both errors.
  The rate for $\phi^{(k)}$ in panel (b) is close to twice the rate in panel (a).
  See Section \ref{s:elastres} for an explanation of $R_\text{acc}$.
  }
  \label{f:elastconv}
\end{figure}

 \subsubsection{Laplace: small scale elastance convergence test}
\label{s:elastres}
 
We test the MFS elastance formulation of Section~\ref{elastance}
with one-body preconditioning as in Section~\ref{one_body_revisit}, for a random cluster of $P=10$ unit spheres ($R=1$) with a given minimum separation of $\delta$.
The cluster is ``grown'' from the origin, with every new particle in a randomly sampled direction at the minimum distance from the origin such that the minimum distance to at least one other sphere is exactly $\delta$.

We study convergence (accuracy vs $N$) at a moderate separation $\delta=0.1$.
This is not the smallest $\delta$ that the proposed scheme is practical for:
separations down to $\delta=0.01$ may be handled to several digits of accuracy in the scalar case, but would demand $N>5000$ and high resulting SVD costs (tens of seconds).

Fixing $R_p=0.7$, with $N=969$ proxy points,
the
resulting surface charge density and magnitude of the surface residual are shown in Fig.~\ref{f:elast}.
A couple of implementation details must be explained.
The ``bare'' full system matrix comprising all offdiagonal ($k\neq k'$) blocks
$\vec S^{(kk')}$, $k,k'=1,\dots,P$, was filled once (size $PM\times PN$, i.e., $11230 \times 9690$), enabling the matrix-vector multiply with the
preconditioned system matrix \eqref{2esysprec} to be performed in four stages:
i) apply the block pseudoinverses ${\vec S^{(k)}}^+$ to the blocks of the vector as in \eqref{pseudo}, ii) apply $(\vec I-\vec L)$ to the blocks of the vector,
iii) multiply this vector by the bare offdiagonal system matrix,
and iv) add this to the original vector (for the identity term).
With this matrix-vector multiply,
GMRES was used to solve \eqref{2esysprec} iteratively, with a relative stopping tolerance $10^{-8}$.
The proxy strengths $\bal^{(k)} = {\vec S^{(k)}}^+ \bga^{(k)}$ were recovered from the solution vector,
and the voltages $\phi^{(k)}$ extracted via \eqref{vkal}.
This whole solution takes 3 seconds of CPU time.
Around 5 uniform digits of surface residual is achieved;
it is clear from the figure that the worst residuals occur at the close-to-touching areas.

The convergence of the maximum surface residual error vs $N$ is shown in Fig.~\ref{f:elastconv}a:
it is remarkably consistent with root-exponential (dashed line), down to 7-digit
uniform surface residual error.
Here $15$ GMRES iterations were needed for every $N$ tested in the graph, indicating
a small and stable condition number.

Yet, with a little theory, one can go further and explain the {\em rate} of
root-exponential convergence, as follows (we first reported this for the Stokes resistance problem in \cite[Sec.~3]{Broms2024}).
Recall that the space of spherical harmonics up to degree $p$ has dimension $(p+1)^2$.
With one proxy point per spherical harmonic%
\footnote{Since spherical designs integrate spherical harmonics of degree $p$ exactly using $N\sim p^2/2$ points \cite{womersley18}, one might expect two proxy points needed per harmonic; however that rate fits the data less well, a fact that we leave for future investigation.}
we would thus expect a maximum degree $p\approx\sqrt{N}$.
Standard MFS theory (by analogy with 2D rigorous analysis \cite{Ka89}) predicts
that convergence should be geometric in $p$ with a rate controlled by
the minimum radius $R_\text{acc}<1$ to which the solution may be continued {\em inside} the sphere as a regular Laplace solution. For a pair of unit spheres separated by $\delta$ this is known:
\[
R_\text{acc} = 1 + \delta/2 - \sqrt{\delta +\delta^2/4}
\]
is the accumulation point radius for the multiple image reflection series \cite{Cheng2000,Broms2024}. At $\delta=0.1$ one gets $R_{\text{acc}}\approx0.73$, and the rate fit with the resulting $R_{\text{acc}}^{\sqrt{N}}$ in Fig.~\ref{f:elastconv}a appears excellent.

\begin{remark}[MFS convergence rate regimes.]\label{r:rate}
By analogy with rigorous MFS theory in 2D \cite{Ka89,Barnett2007}, we expect two regimes for the error convergence: 1) when $R_\text{acc}>R_p^2$ the
singularity in the continuation (decay of on-surface spherical harmonic coefficients) dominates to give a rate $\bigO(R_\text{acc}^p)$; otherwise 2) the discreteness (aliasing error) of the proxy points takes over, giving $\bigO(R_p^{2p})$.   
Our parameters fall into the first regime. Indeed,
empirically we find that changing $R_p$ from $0.7$ to $0.8$ (not shown) has no effect on the rate (although the error prefactor worsens).
At our $\delta=0.1$, only when $R_p > \sqrt{R_\text{acc}} \approx 0.85$
would we enter the second, aliasing-dominated regime. The MFS convergence regimes are also visualized in Fig.~\ref{conv_pair} of Appendix \ref{acc_spheres} in the Stokes case.
\end{remark}
 
Fig.~\ref{f:elastconv}b shows the convergence for the potentials
$\phi^{(k)}$ themselves, which appears to have nearly twice the root-exponential rate of the surface residual maximum error (the dotted red line shows twice the rate for comparison). An improvement of rate (but only by a factor 1.5) for such body-averaged quantities was also found in the resistance problem \cite[Fig.~4]{Broms2024}.
By around $N=1700$ per sphere, 12 digits are achieved for the absolute error in the potentials.

	\subsubsection{Stokes: comparison to a boundary integral reference}\label{BIEcomp}
 
A useful numerical test is to solve the resistance and mobility problems in succession,
since they are supposed to be each other's inverse.
This starts with a known vector $\vec U_{\text{ref}}\in\mathbb R^{6P}$ of stacked translational and angular velocities, and
computes $\vec F\in\mathbb R^{6P}$, a stacked vector of the net forces and torques on all particles, by solving the resistance problem. We then insert $\vec F$ as the input to the mobility solver to get an output vector $\vec U$, and
finally report 
\begin{equation}\label{2way}    
\epsilon_{\text{2-way}} \coloneqq \|\vec U-\vec U_{\text{ref}}\|_\infty/\|\vec U_{\text{ref}}\|_\infty
\end{equation}
as an error metric. We call this the {\em 2-way error}. When the 2-way error is quantified here and later, distinct proxy radii are used for resistance and mobility to avoid misleading cancellations (``inverse crimes''): specifically, the offsets for the proxy-surface in the normal direction from the particle surface are set such that $\Delta_{\text{sep}}^{\text{mob}} = 1.05\Delta_{\text{sep}}^{\text{res}}$.  
In the sphere examples, we will report on the radius of the resistance problem,  $R_p^{\text{res}} \coloneqq 1-\Delta_{\text{sep}}^{\text{res}}$. 

One first should validate the 2-way error against an independent error metric. 
For the latter we compare our mobility solve particle velocity outputs to those from an established BIE reference method using QBX quadrature \cite{AfKlinteberg2016} with 900 unknowns per body and a quadrature error tolerance of $10^{-6}$, and report
\begin{equation}\label{BIEerr}
    \epsilon_{\text{BIE}}\coloneqq \|\vec U-\vec U_{\text{BIE}}\|_\infty/\|\vec U_{\text{BIE}}\|_\infty.
\end{equation}
The validation results are in Fig.~\ref{Fig:sphere_QBX}.
We chose 56 test geometries consisting of five randomly positioned unit spheres driven by randomly sampled forces and torques. The BIE solution is much more costly to compute than the MFS solution, due to the quadrature setup.
The radius of the proxy-surface is set to $R_p = 0.59$ and $N=686$.
We report $\epsilon_{\text{BIE}}$ in Fig.~\ref{Fig:sphere_QBXa} versus the minimum particle-particle distance $\delta$ in each geometry. For small $\delta$, the interaction becomes increasingly difficult to resolve, both with MFS and the BIE reference, due to
lubrication effects between particles under relative motion.
However, Fig.~\ref{Fig:sphere_QBXb} demonstrates that $\epsilon_{\text{2-way}}$ is a faithful surrogate for $\epsilon_{\text{BIE}}$, at least within $\pm1$ digit of accuracy,
in the range above the MFS GMRES tolerance of $10^{-7}$. 
Thus, $\epsilon_{\text{2-way}}$ alone will be used as the error metric from now on. 
	\begin{figure}[t!]
		\centering
  \begin{subfigure}[t]{0.49\textwidth}
			\centering
					\includegraphics[trim = {5cm 16.8cm 9.0cm 4.4cm},clip,width=0.8\textwidth]{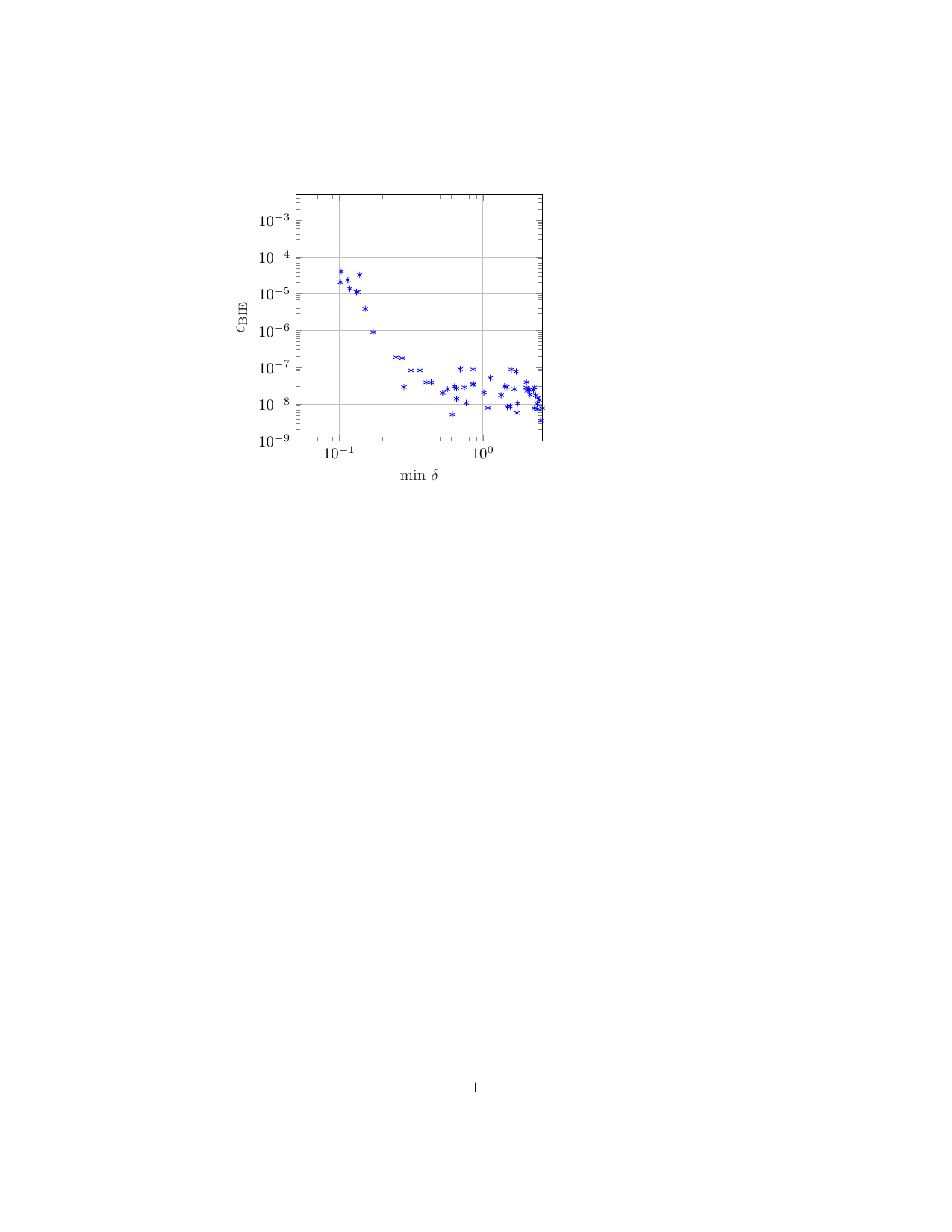}
			\caption{Error relative to the BIE reference, as function of the minimum separation $\delta$ between the particles. }
			\label{Fig:sphere_QBXa}
		\end{subfigure}~~~~
		\begin{subfigure}[t]{0.49\textwidth}
			\centering
					\includegraphics[trim = {5cm 16.8cm 9.0cm 4.4cm},clip,width=0.8\textwidth]{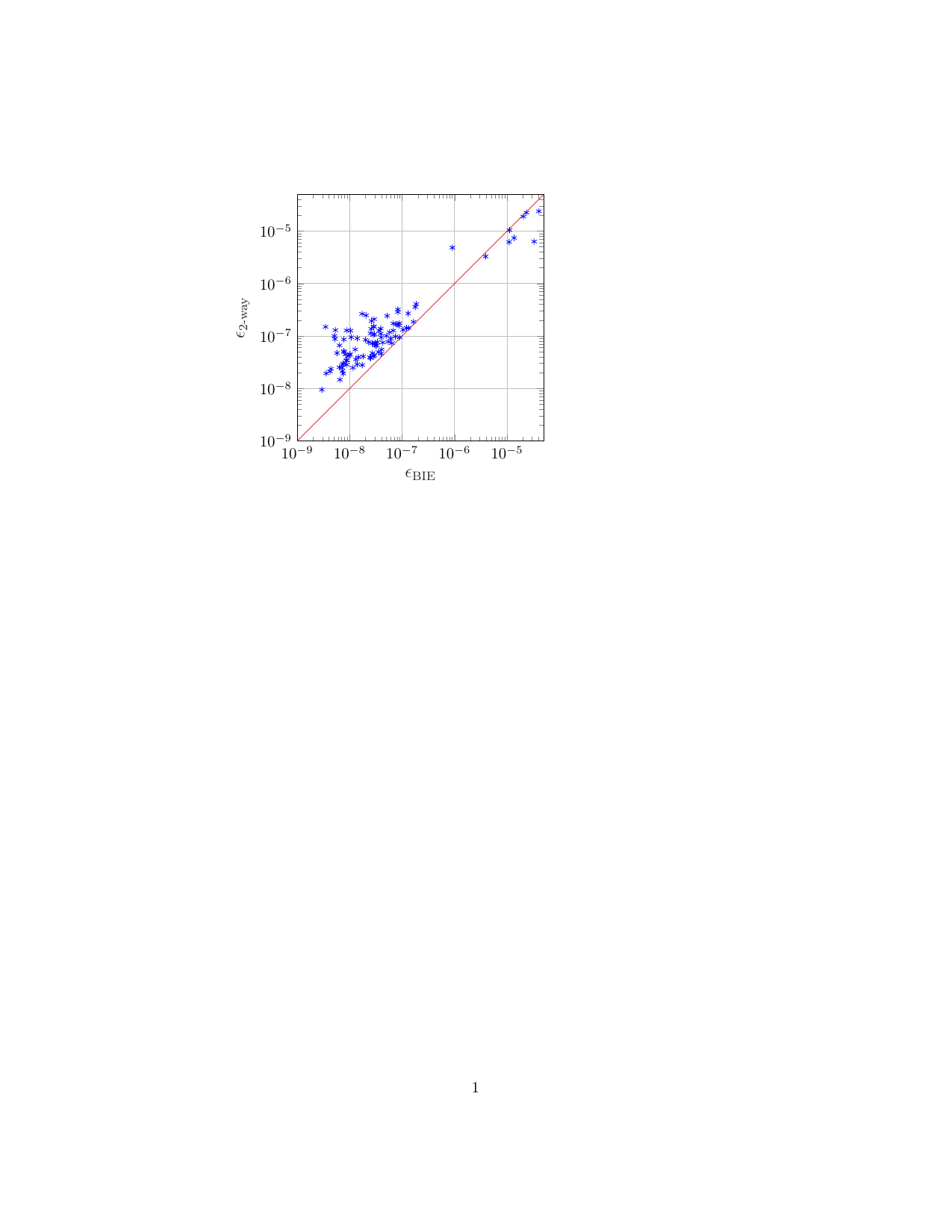}
			\caption{Error relative to the BIE reference compared to the 2-way error.} 
			\label{Fig:sphere_QBXb}
		\end{subfigure}~
		\caption{We validate the error metric $\epsilon_{\text{2-way}}$ defined by \eqref{2way} against the BIE reference error $\epsilon_{\text{BIE}}$ defined by \eqref{BIEerr}, for geometries with five random spheres. In panel (b), points above the red line---a majority---represent configurations where the 2-way error $\epsilon_{\text{2-way}}$ is the largest measure of the error. 
  The GMRES tolerances for the reference BIE and MFS solutions are set to $10^{-6}$ and $10^{-7}$ respectively.} 
		\label{Fig:sphere_QBX}
	\end{figure}
 

 \subsubsection{Stokes: clusters of spheres}
 
	Next, we test accuracy for varying minimum separation distances $\delta$ and particle numbers $P$. To support the claim of a well-conditioned solver, we report GMRES iteration counts for the one-body preconditioned schemes of Section \ref{solving_res} for the resistance problem, and Section \ref{one_body_revisit} for the mobility problem. We consider clusters of spheres of
 the same type as in Example \ref{s:elastres}. Fig.~\ref{sphere_cluster} shows an example, with the traction vector computed at 2000 new points per particle.
 A few fixed choices of proxy point number $N$ per particle, and proxy radius $R_p$, are tested. 
 We report $\epsilon_{\text{2-way}}$ defined in \eqref{2way}, with 
 randomly sampled rigid body velocity inputs.
	
	\begin{figure}[h!]
		\centering
  \begin{subfigure}{0.49\textwidth}
  \hspace*{-3ex}
		\includegraphics[trim = {1cm 1cm 0cm 0cm},clip,width=1.1\textwidth]{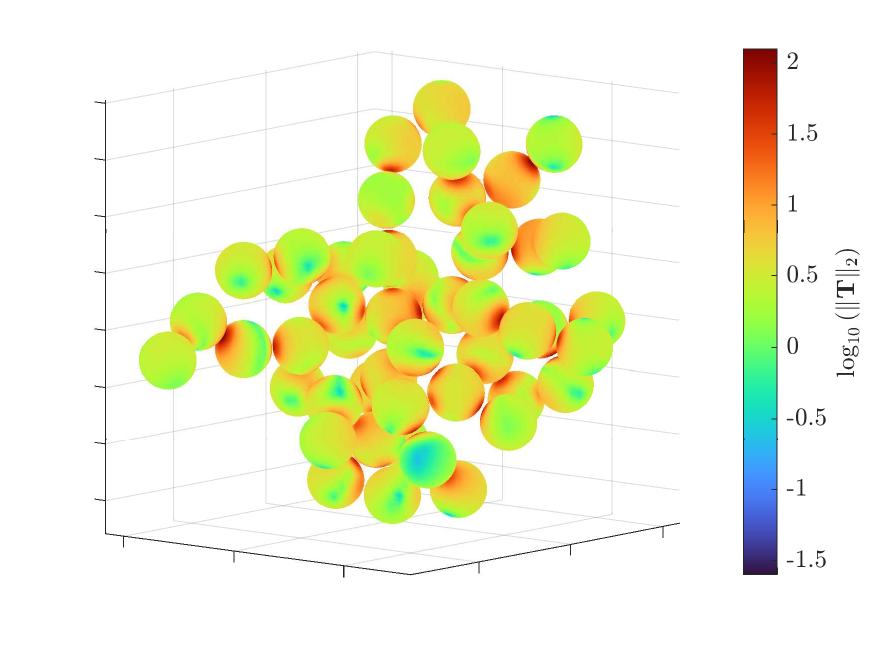}
  \caption{}
  \end{subfigure}
  \begin{subfigure}{0.5\textwidth}
  \centering
\includegraphics[trim = {1cm 1cm 0cm 0cm},clip,width=1.1\textwidth]{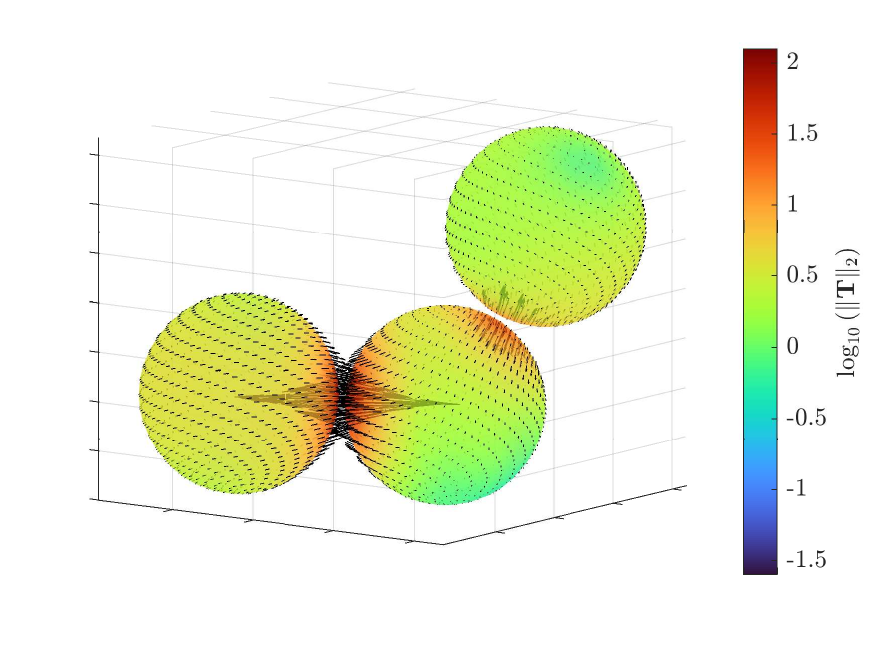}
      \caption{}
  \end{subfigure}
  		\caption{Point-wise force density magnitude $\|\vec T\|_2$, where traction $\vec T$ is defined by \eqref{traction}, computed on: (a) an example cluster of 50 spheres with random rigid body motions and $\delta = 0.1$, and (b) a smaller example of the same kind, with also the direction of the traction $\vec T$ displayed with black arrows.}	
  \label{sphere_cluster}
	\end{figure}
	
	\begin{example}[Performance for decreasing particle gaps.]\label{delta_dep}
	The number of GMRES iterations needed to solve the  mobility and resistance problems are reported as functions of $\delta$ for clusters of 10 spheres in Figs.~\ref{Gmres_mob} and \ref{Gmres_res}. Considerably fewer iterations are required for the mobility problem than for the resistance problem for all $\delta$ and combinations $(N,R_p)$. 
 The 2-way error for the same test is reported  in Fig.~\ref{err_10cluster}, with shaded regions representing the maximum and minimum error from solving the problem 10 times for each $\delta$, with different configurations and velocity data.
 For two of the reported discretization pairs $(N,R_p)$, the error level is less than $10^{-3}$ for all investigated separations, i.e., down to $\delta = 0.05$. (The larger error observed with $R_p = 0.7$ and $N=686$ is simply explained by convergence regime 1 in Remark \ref{r:rate}.)
 
	\begin{figure}[t]
		\centering	
  \begin{subfigure}[t]{0.32\textwidth}
			\centering
			\includegraphics[trim = {5.3cm 16.2cm 7.5cm 4.4cm},clip,width=1.1\textwidth]{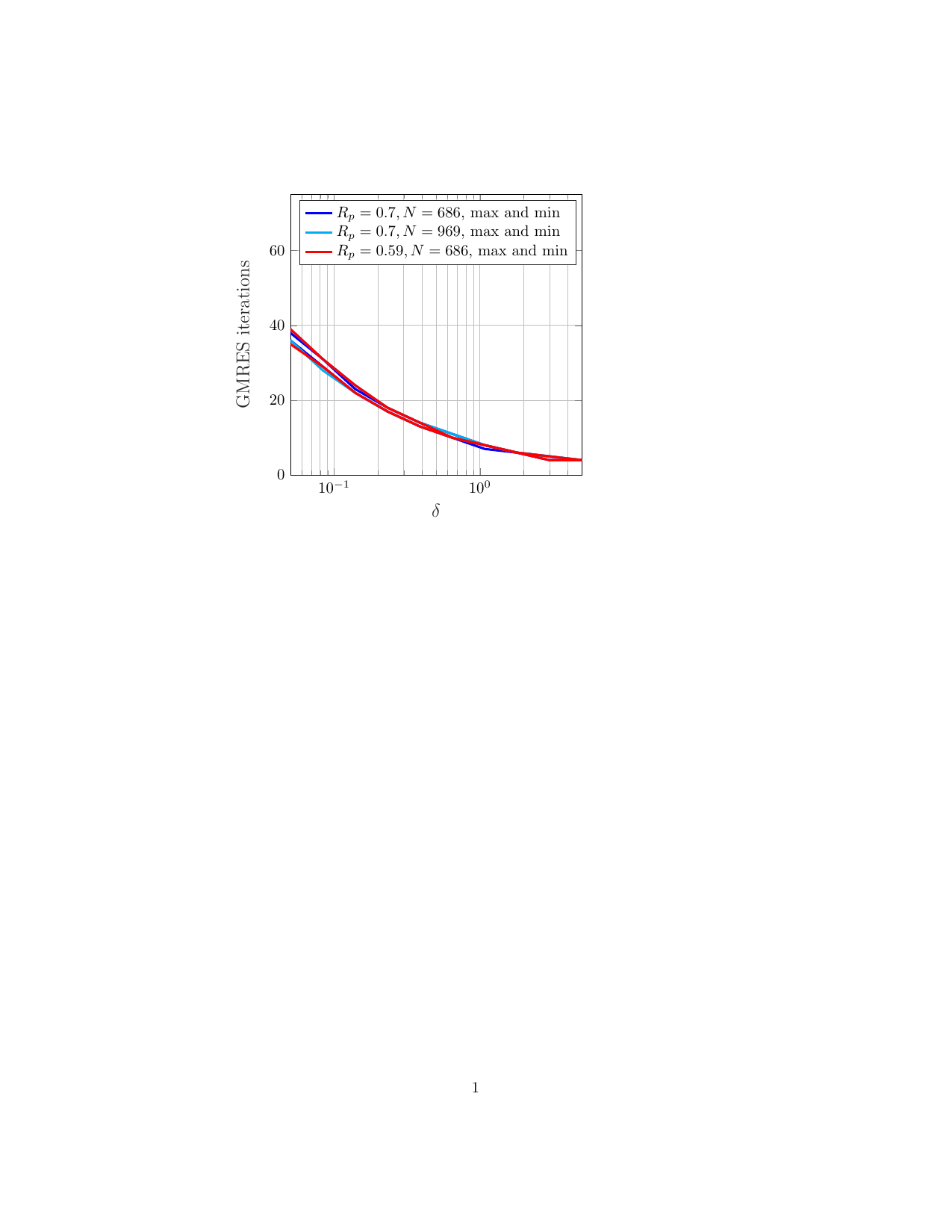}
			\caption{Mobility problem iteration count as function of particle separation $\delta$.}
			\label{Gmres_mob}
		\end{subfigure}~
		\begin{subfigure}[t]{0.32\textwidth}
			\centering
			\includegraphics[trim = {5.3cm 16.2cm 7.5cm 4.4cm},clip,width=1.1\textwidth]{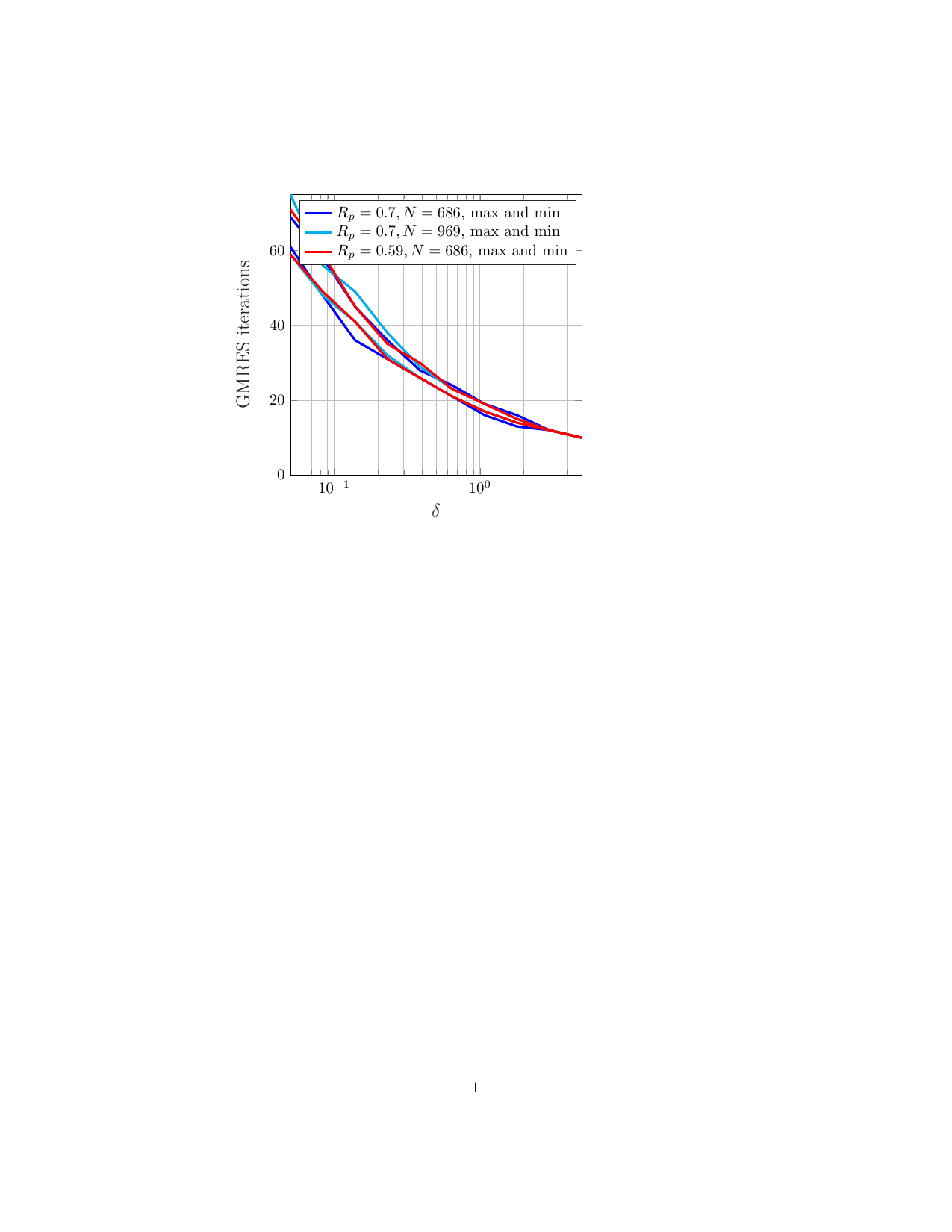}
			\caption{Resistance problem iteration count as function of $\delta$.}
			\label{Gmres_res}
		\end{subfigure}
  		\begin{subfigure}[t]{0.32\textwidth}
			\centering
			\includegraphics[trim = {5.3cm 16.2cm 7.5cm 4.2cm},clip,width=1.1\textwidth]{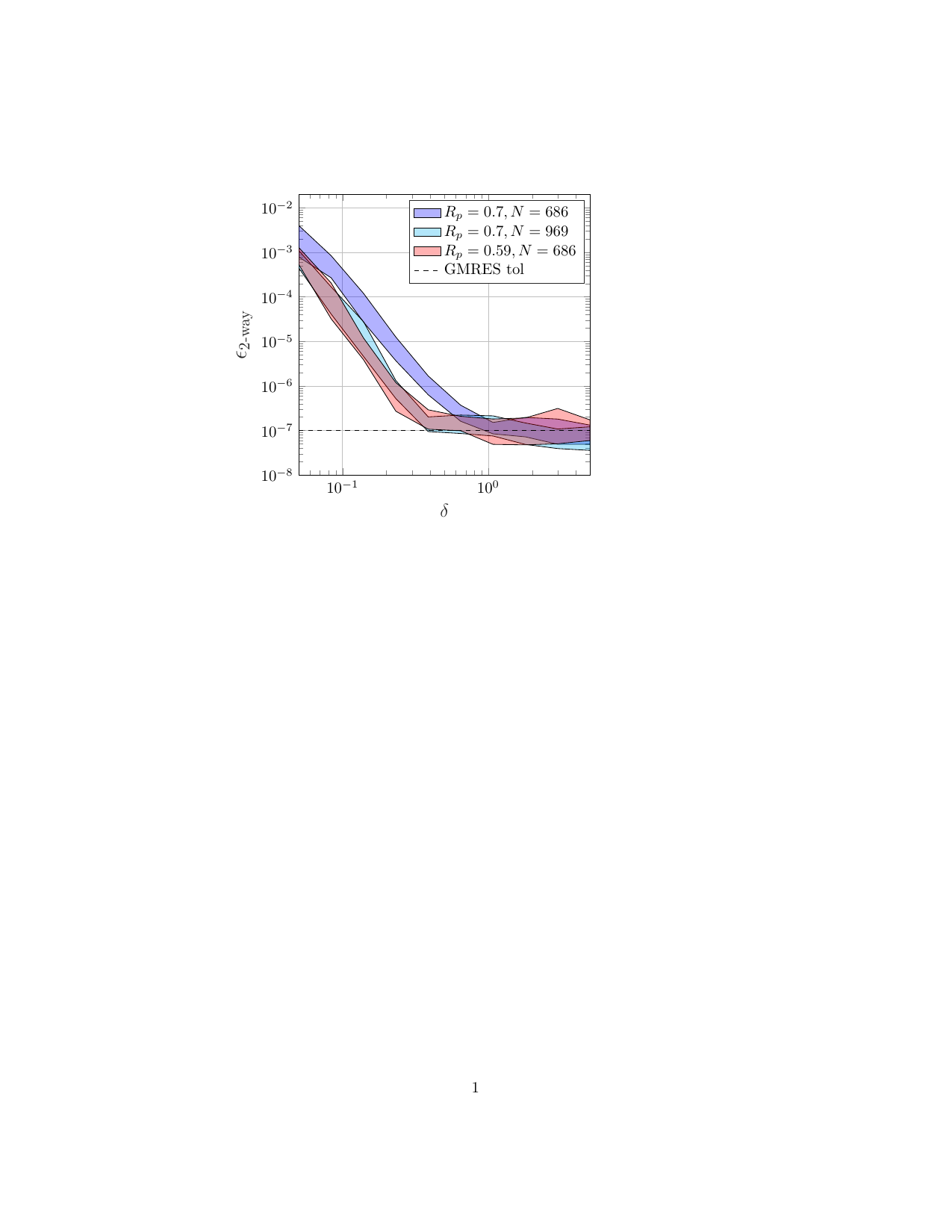}
			\caption{2-way error as function of $\delta$.}
			\label{err_10cluster}
		\end{subfigure}
		\caption{Performance of the resistance and mobility formulations for clusters of 10 spheres grown for variable particle-particle separations $\delta$. In panel (a) and (b), the number of GMRES iterations are displayed for the mobility and resistance problems, 
  with smaller iteration counts observed for mobility. The resulting 2-way error range is reported vs $\delta$ in panel (c). 
  For each $\delta$, the experiment is repeated in 10 different runs, with randomly sampled clusters and rigid body velocities for each. Results in panels (a), (b) and (c) display max and min for the 10 runs for each choice of $R_p$ and $N$ used for discretizing the proxy-surface. }
		\label{fig_10cluster}
	\end{figure}
 \end{example}
	
	\begin{example}[Performance with increasing number of particles.]\label{vary_P}
Next, we vary the number of particles $P$ in the cluster and quantify the number of GMRES iterations for fixed choices of $\delta$ in Fig.~\ref{Fig:varyP}. Remarkably, for the mobility problem, the iteration count does not grow with $P$, while for the resistance problem the iteration count grows weakly (roughly as $\sqrt{P}+c$ for some constant $c$). \footnote{This big convergence rate difference between resistance and mobility is in agreement with other numerical methods in the literature; see a discussion on p.251 of \cite{USABIAGA2016}.} For the two separations $\delta = 0.2$ and $\delta = 1$, the 2-way error and the maximum magnitudes of the coefficient vector $\vec\lambda$ are also displayed for both the resistance and mobility problems,  in Figs.~\ref{Fig:errorP} and \ref{Fig:magnP}. For smaller $\delta$, the problem is harder to resolve and the magnitude of the coefficient vector is increased, as expected for the MFS \cite{Doicu2000,Barnett2007}.
	\end{example}
	\begin{figure}[t]
		\centering	
  \begin{subfigure}[t]{0.32\textwidth}
			\centering
            \hspace*{-3ex}
			\includegraphics[trim = {5.3cm 15.7cm 7.5cm 4.2cm},clip,width=1.1\textwidth]{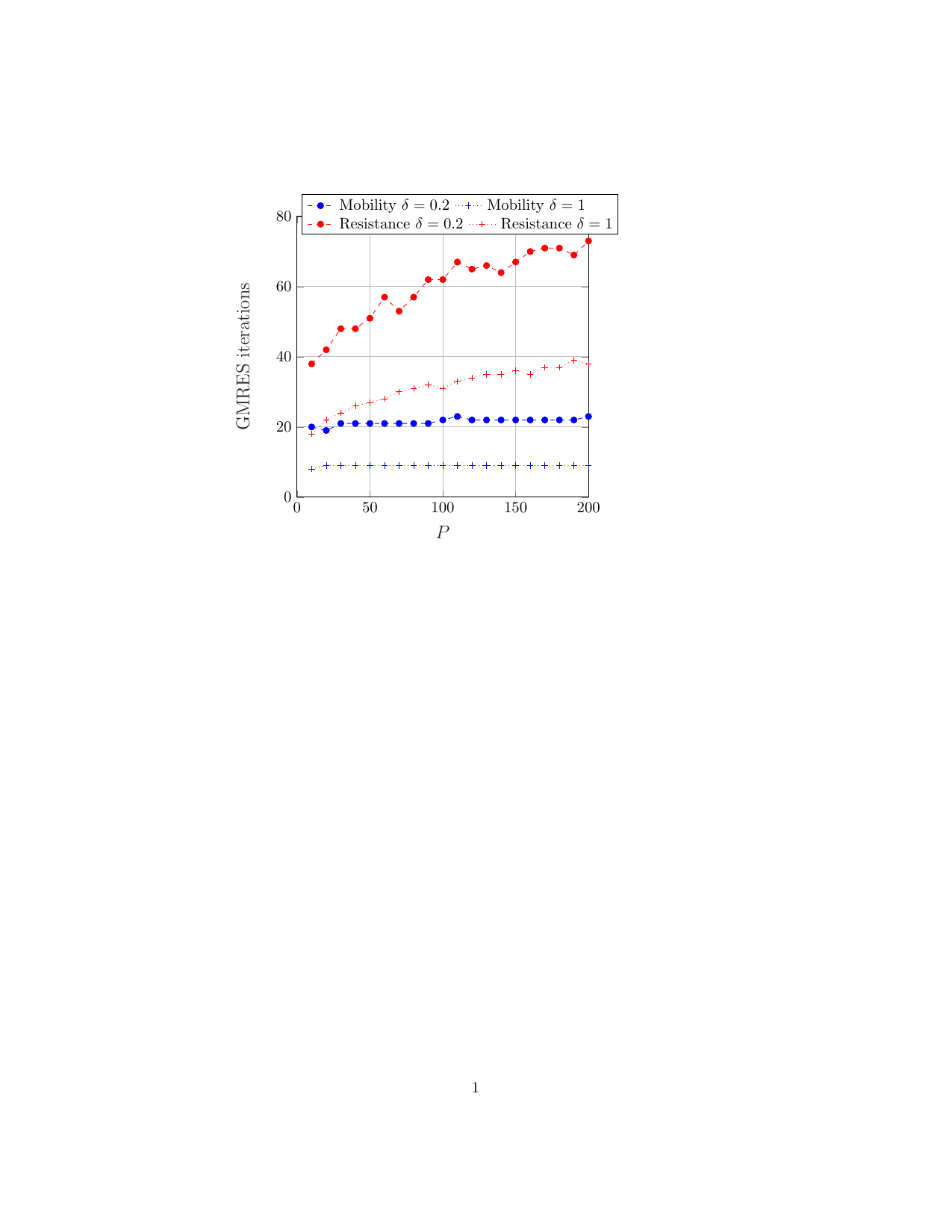}
			\caption{}
			\label{Fig:iterationP}
		\end{subfigure}~~
		\begin{subfigure}[t]{0.32\textwidth}
			\centering
            \hspace*{-3ex}
			\includegraphics[trim = {5.3cm 15.9cm 7.5cm 4.2cm},clip,width=1.1\textwidth]{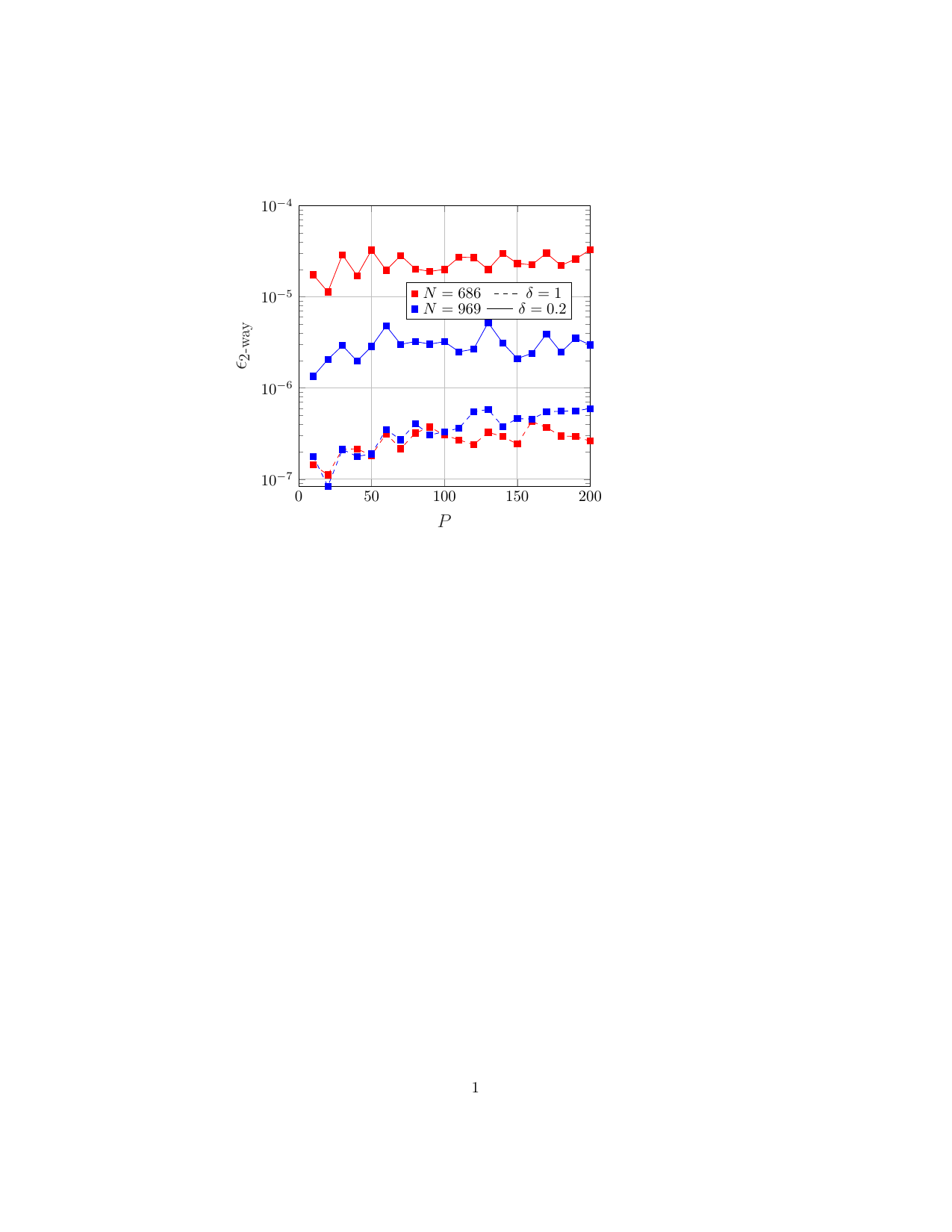}
			\caption{}
			\label{Fig:errorP}
		\end{subfigure}
		\begin{subfigure}[t]{0.32\textwidth}
			\centering
            \hspace*{-1ex}
			\includegraphics[trim = {5.3cm 15.9cm 7.5cm 4.2cm},clip,width=1.1\textwidth]{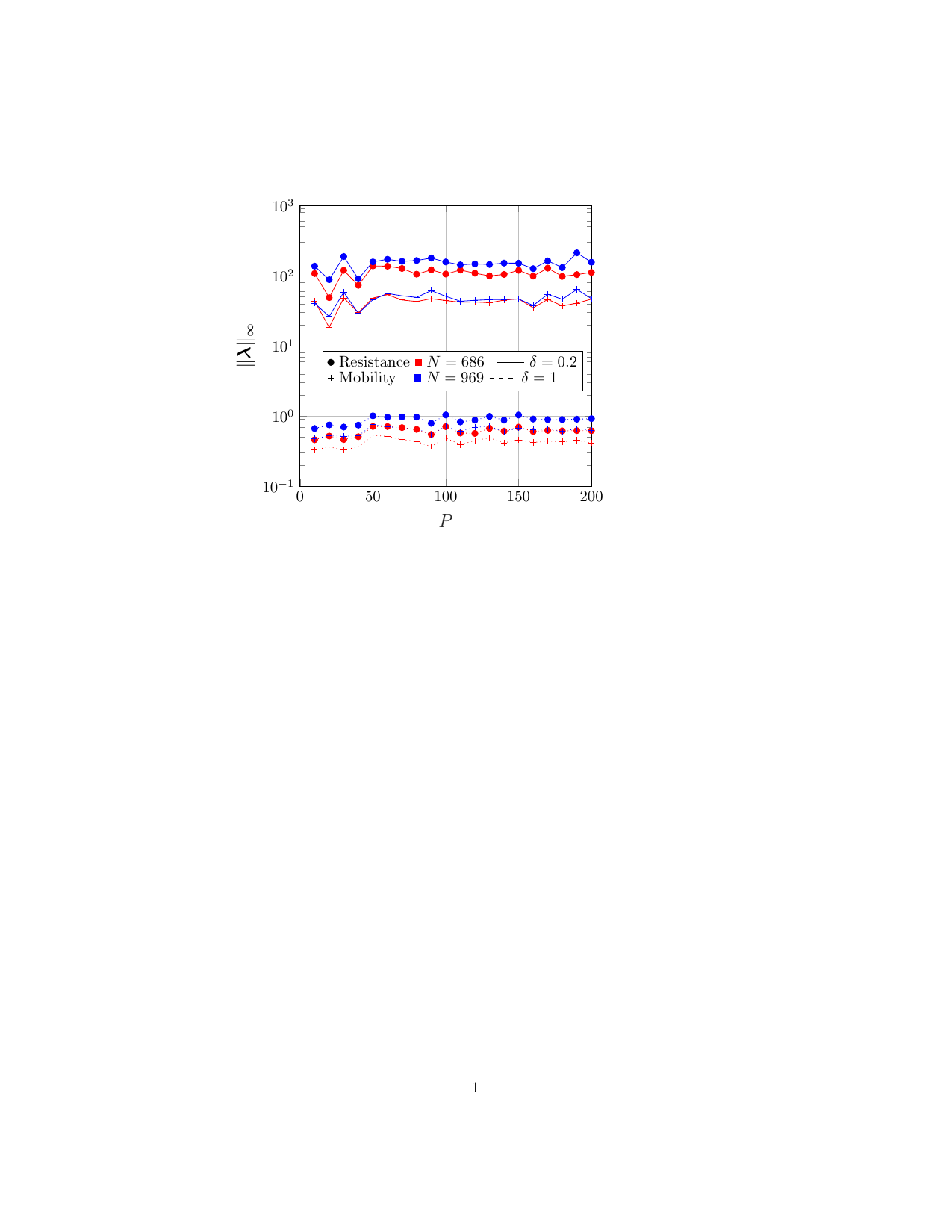}
			\caption{}
			\label{Fig:magnP}
		\end{subfigure}
		\caption{Performance for constant-density clusters of $P$ spheres. In panel (a), it is shown that the number of GMRES iterations grows weakly with $P$ for the resistance problem, but stays constant for the mobility problem. As can be seen in panel (b), the 2-way error depends both on $N$ (proxy point number per particle), and on the minimum separation $\delta$, but not on $P$. In panel (c), $\|\vec\lambda\|_{\infty}$ on any particle stays constant in both the mobility and resistance problems with increasing $P$, but is slightly larger for the resistance problem than for mobility. 
  (This could however be an effect of the slightly smaller proxy-radius chosen for the resistance problems.) Each particle travels with randomly sampled rigid body velocities and the resistance and mobility problems are solved in succession. The GMRES tolerance is set to $10^{-7}$, and $R_p^{\text{res}}=0.7$ for both choices of $N$. For $P>50$, FMM is used with tolerance set to $10^{-8}$.}
		\label{Fig:varyP}
	\end{figure}
 
 \begin{remark}[FMM tolerance.]\label{FMM_tol}
     The FMM tolerance has to be sufficiently strict so as not to affect the error level and iteration count for large $P$. The reason is that the surface velocity for a single particle determined with FMM does not exactly match the subtracted self-interaction computed with direct summation in each matrix-vector multiply, in the iterative solution schemes of Sections \eqref{solving_res} or \eqref{one_body_revisit}. A limiting factor is the {\em magnitude} of the coefficient vector, $\|\vec\lambda\|_{\infty}$, which serves to amplify the FMM error.
     For instance, since $\|\vec\lambda\|_{\infty}\approx 10^2$ in Fig.~\ref{Fig:magnP}, we set an FMM tolerance of $10^{-8}$ to achieve 6-digit evaluation error.
 \end{remark}

	\subsection{Ellipsoids}\label{ellipsoids}
 Next, we demonstrate the capability of our scheme to handle smooth non-spherical particles. 
 	We consider large clusters of two types of ellipsoids: spheroids with semiaxes $a=b=0.5$ and $c= 1$ (``Type S''), or triaxial ellipsoids with $a=0.4$, $b=0.6$ and $c= 1$ (``Type T''). In Examples \ref{conv_ellipsoid} (a convergence test) and \ref{large_el_ex} (a large scale demonstration), we grow clusters of ellipsoids of Type S or T with each particle $\delta$ away from at least one neighbor, using alternating projection to compute pair separations. Examples of clusters of ellipsoids of Type T are illustrated in Figs.~\ref{large_ex} and \ref{Fig:Ellipsoids}. Apart from the 2-way error, we also report the {\em relative surface residual} defined pointwise by
 \begin{equation}\label{residual}
 \epsilon_{\text{res}}(\vec x)\coloneqq \|\vec u(\vec x)-\vec g(\vec x)\|_{2}/\|\vec g(\vec x)\|_{2}, \qquad \vec x \in \partial\Omega,
 \end{equation}
 with $\|\cdot\|_2$ the Euclidean norm in $\mathbb R^3$, $\vec u$ the solution flow field given by the MFS representation, and $\vec g$ the rigid-body surface velocity \eqref{rbm} using the computed particle velocities.
 This is simply the Stokes analog of the residual shown in Figure~\ref{f:elast}b.
 Its maximum is estimated using a large set of points on all particle surfaces. 

Both the proxy and collocation surfaces are discretized with the quasi-uniform ellipsoid grid described by Stein and the 2nd author in \cite{Stein2022}. In brief, an ellipsoid is parameterized
as $(a\sqrt{1-t^2}\cos{s},b\sqrt{1-t^2}\sin{s},ct)$
in Cartesians relative to its center, where $(s,t)\in[0,2\pi]\times[-1,1]$.
These parameters are discretized with $N_v$ Gauss--Legendre nodes in the $t$-direction, then periodic trapezoidal nodes in the $s$-direction. The number of the latter are roughly bounded by $0.75N_v$, and varied with $t$ to give an approximately uniform surface density.
The number of nodes 
on the proxy-surface, normal-shifted from the true surface, is then $N\approx17+3.9N_v+0.44N_v^2$. 
  For the collocation surface, we increase the number of Gauss-Legendre nodes such that $M \approx17+3.9(1.15N_v)+0.44(1.15N_v)^2$, resulting in $M\approx1.3N$. The rectangularity $M/N$ must be larger than for spheres, we believe due to the greater variation in node uniformity and curvature. However, this does not affect the dominant cost, which is the FMM scaling only with $N$.
   
  \begin{example}[Convergence study.]\label{conv_ellipsoid}  For four ellipsoids of Type T with given rigid body motions, and separations $\delta = 0.5$, the error dependence on $N_v$, and the separation between proxy and collocation surfaces $\Delta_{\text{sep}}$, is studied in Fig.~\ref{Fig:deltasep}. This was then used to pick a good $\Delta_{\text{sep}}$ for later tests.
Much as in the elastance sphere case, Fig.~\ref{Fig:convergenceN}
shows that the convergence of the maximum residual is slower than that of the 2-way error,
but that both of them are spectrally accurate with respect to $N_v$.

 \begin{figure}[t]
 \centering
 		\begin{subfigure}[t]{0.45\textwidth}
			\centering
			\includegraphics[trim = {5.3cm 16cm 5.8cm 4.5cm},clip,width=1.12\textwidth]{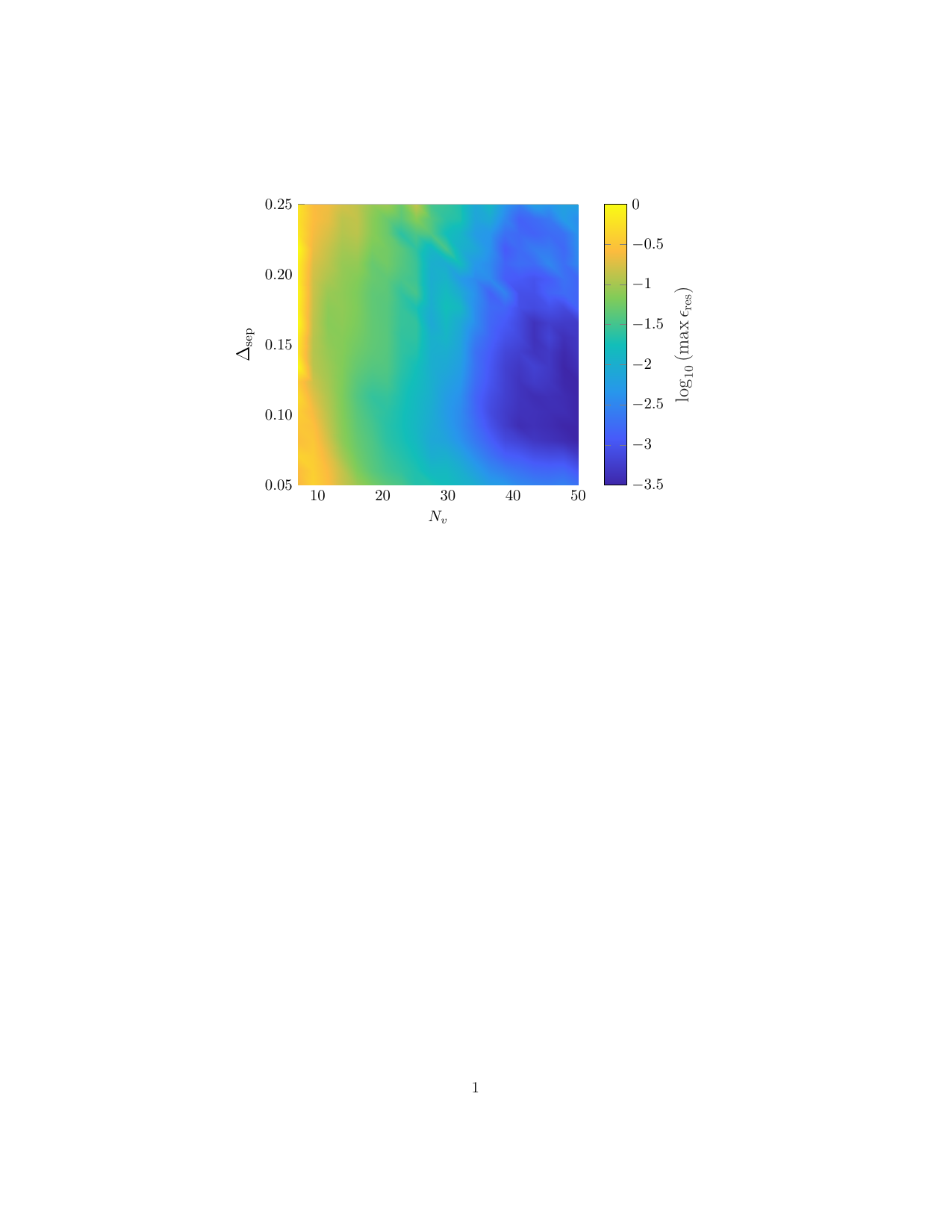}
			\caption{}
			\label{Fig:deltasep}
		\end{subfigure}~~~~~
		\begin{subfigure}[t]{0.44\textwidth}
			\centering
			\includegraphics[trim = {5.3cm 16cm 8.0cm 4.5cm},clip,width=0.9\textwidth]{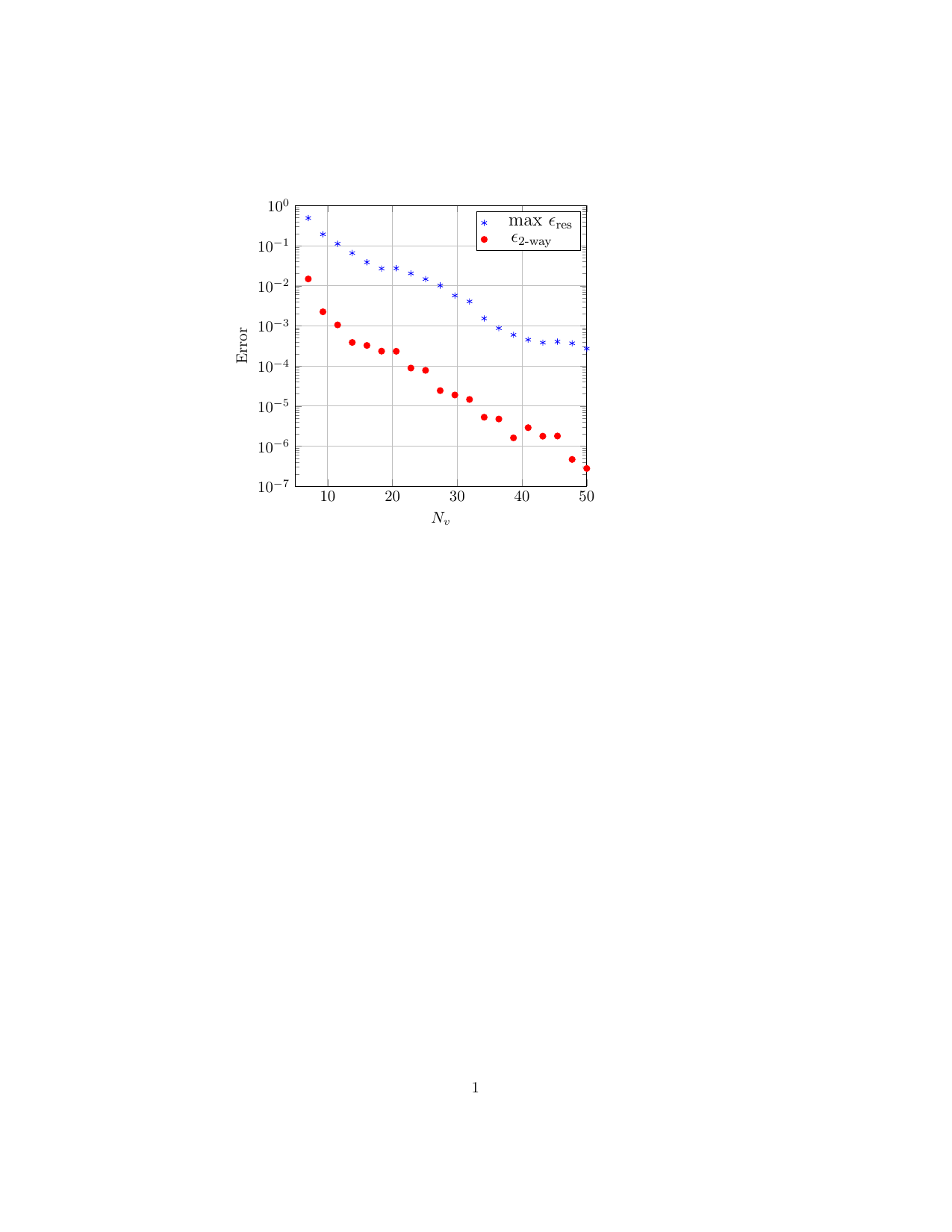}
			\caption{}
			\label{Fig:convergenceN}
		\end{subfigure}
		\caption{Convergence test for ellipsoids with a geometry of  four particles of Type T (with semiaxes $\lbrace 0.4, 0.6, 1\rbrace$) and separation $\delta = 0.5$.  In panel (a), the distance between the proxy and collocation surfaces, $\Delta_{\text{sep}}$, is varied together with the  particle resolution, as set by $N_v$, and the maximum relative residual is displayed. In panel (b), convergence in $N_v$ is shown for two error metrics, with $\Delta_{\text{sep}}^{\text{res}} = 0.125$. }
		\label{Fig:convergenceE}
	\end{figure}
 \end{example}

 \begin{example}[Large scale tests.]\label{large_el_ex}
 Finally, we consider larger clusters of ellipsoids of Types S and T, with varying number of particles $P$ and separations $\delta$. For each configuration, the performance for both resistance and mobility solves is quantified in terms of GMRES iterations, total CPU solution times, coefficient magnitudes $\|\vec\lambda\|_{\infty}$, and the two error metrics $\epsilon_{\text{2-way}}$ and $\max\,\epsilon_{\text{res}}$.  Results are summarized in Table \ref{Tab:ellipsoids}. The reported solution times 
 correspond to roughly linear scaling in the number of particles and are dominated ($>90\%$) by the FMM -- more so for larger $P$ than for smaller. 
 The number of iterations as functions of $P$ and $\delta$ follow the same trends as for spheres: there is no increase in iterations with $P$ for the mobility problem, while there is a growth for the resistance problem. For both problems, a larger iteration count is required as $\delta$ decreases. 
 Similar performance is observed for the two ellipsoid aspect ratios, but problems with particles of Type T are slightly harder to resolve.  
    In Fig.~\ref{Fig:resiudal}, $\epsilon_{\text{res}}$  is shown over the surfaces of a subset of the particles in a larger simulation. The largest residuals are obtained on parts of a particle surface of high curvature or close to touching another body. The corresponding force density on the proxy-surfaces in the resistance solve is visualized in Fig.~\ref{Fig:density}, and is typically of larger magnitude towards the particle tips.  
 For both the mobility and resistance problems, $N_v = 40$, such that $N=864$, $M = 1124$. 
 For the mobility problem, the relative residual $\epsilon_{\text{res}}$ is measured at 1732 points per particle surface using the computed rigid body velocities.
 A key conclusion is that the 2-way error is consistently 2 digits more accurate than the maximum residual error, and still retains 2-3 accurate digits even at the close separation $\delta=0.05$.

	\begin{figure}[h!]
  \centering
  \hspace*{-2ex}
  

		\begin{subfigure}[b]{0.49\textwidth}
   \begin{overpic}[trim = {1.4cm 1cm 0.5cm 0cm},clip,width=\textwidth]{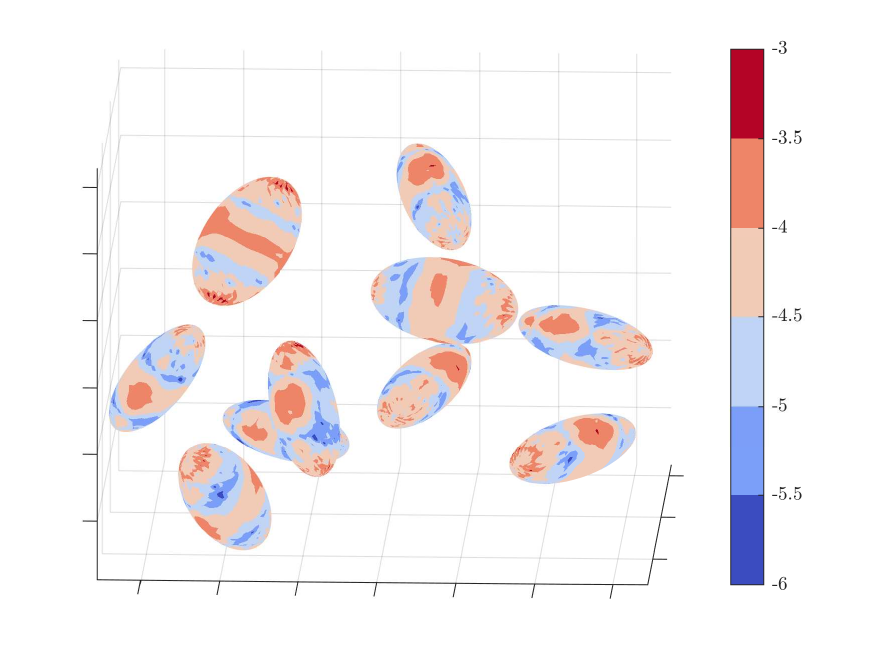}
\put(96,19){\rotatebox{90}{\colorbox{white}{\parbox{0.4\linewidth}{%
     \small{$\qquad\log_{10}\left(\epsilon_{\text{res}}\right)$}}}}}
     \end{overpic}
				\caption{Relative residual on particle surfaces.}
				\label{Fig:resiudal}
    \end{subfigure}~~
    \begin{subfigure}[b]{0.49\textwidth}
   \begin{overpic}[trim = {1.6cm 1cm 0.5cm 0cm},clip,width=1.05\textwidth]{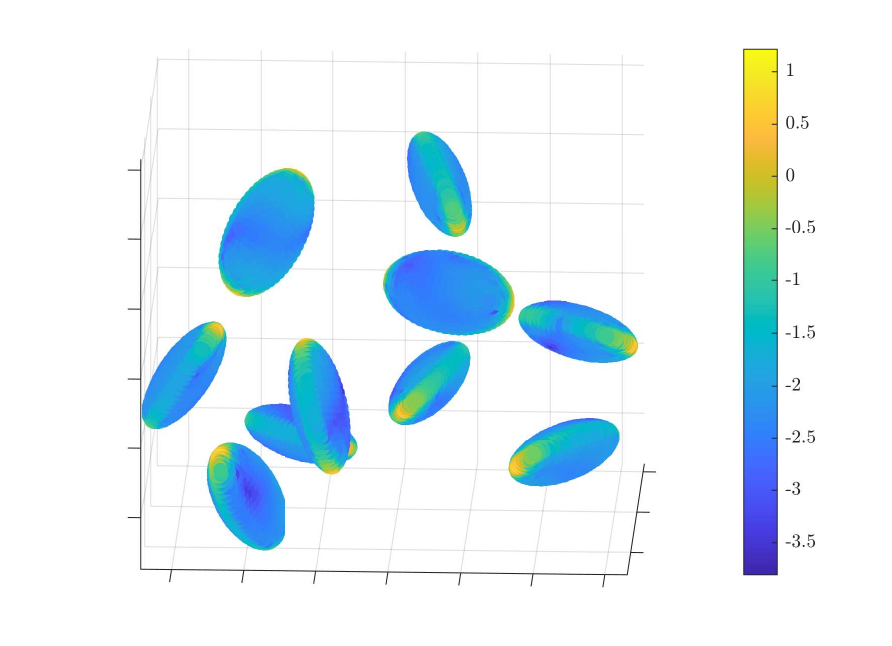}
\put(96,19){\rotatebox{90}{\colorbox{white}{\parbox{0.4\linewidth}{%
     \small{$\qquad\log_{10}\left(\|\vec\lambda\|_2\right)$}}}}}
     \end{overpic}
				\caption{Force density on proxy-surfaces.}
				\label{Fig:density}
		\end{subfigure}
		\caption{A subset of a large cluster of type T ellipsoids with semiaxes $\lbrace 0.4, 0.6, 1\rbrace$ and $\delta = 0.5$.}
 		\label{Fig:Ellipsoids}
 	\end{figure}
	\begin{table}[h!]
		\centering
		\begin{tabular}{l|c|c|c|c|c|c|c|c}
			\multirow{2}{*}{\parbox{1.4cm}{Type, $P$, $\delta$}} & \multirow{2}{*}{\parbox{0.8cm}{$\max\,\epsilon_{\text{res}}$}} & \multirow{2}{*}{$\epsilon_{\text{2-way}}$} & \multirow{2}{*}{\parbox{0.5cm}{iters. mob.}} & \multirow{2}{*}{\parbox{0.5cm}{iters. res.}} & \multirow{2}{*}{\parbox{0.4cm}{$t_{\text{mob}}$ [s]}} & \multirow{2}{*}{\parbox{0.4cm}{$t_{\text{res}}$ [s]}} & \multirow{2}{*}{\parbox{0.5cm}{$\|\vec\lambda\|_{\infty}$ mob.}} & \multirow{2}{*}{\parbox{0.5cm}{$\|\vec\lambda\|_{\infty}$ res.}} \\ & & & & & & &\\ \hline
\hline 
S, 100, 0.5 & $1.04\times 10^{-3}$ & $2.99\times 10^{-6}$ & 10 & 36 & 114 & 317 & $8.1\times 10^{1}$ & $1.2\times 10^{2}$ \\ \hline
S, 500, 0.5 & $9.55\times 10^{-4}$ & $3.54\times 10^{-6}$ & 10 & 50 & 468 & 1932 & $1.4\times 10^{2}$ & $1.9\times 10^{2}$ \\ \hline
T, 100, 0.5 & $4.38\times 10^{-3}$ & $1.88\times 10^{-5}$ & 10 & 36 & 113 & 324 & $1.5\times 10^{3}$ & $4.5\times 10^{3}$ \\ \hline
T, 500, 0.5 & $3.94\times 10^{-3}$ & $2.18\times 10^{-5}$ & 10 & 50 & 460 & 1905 & $1.7\times 10^{3}$ & $4.9\times 10^{3}$ \\ \hline
S, 100, 0.1 & $6.69\times 10^{-2}$ & $7.83\times 10^{-4}$ & 24 & 72 & 217 & 560 & $8.5\times 10^{3}$ & $1.4\times 10^{4}$ \\ \hline
S, 100, 0.05 & $2.64\times 10^{-1}$ & $3.38\times 10^{-3}$ & 35 & 93 & 305 & 733 & $3.9\times 10^{4}$ & $6.3\times 10^{4}$ \\ \hline
T, 100, 0.1 & $1.22\times 10^{-1}$ & $1.06\times 10^{-3}$ & 25 & 73 & 217 & 577 & $1.5\times 10^{4}$ & $4.6\times 10^{4}$ \\ \hline
T, 100, 0.05 & $4.03\times 10^{-1}$ & $5.60\times 10^{-3}$ & 38 & 101 & 322 & 803 & $7.4\times 10^{4}$ & $1.4\times 10^{5}$ \\ 
\hline 
		\end{tabular}
  	\caption{Clusters of $P$ ellipsoids at least $\delta$ apart are studied for the two choices $P=100$ and $P=500$. Type S are spheroids with semiaxes $\lbrace 0.5,0.5,1\rbrace$ and Type T are triaxial ellipsoids with  semiaxes $\lbrace 0.4, 0.6, 1\rbrace$. The total solution times in $t_{\text{res}}$ and $t_{\text{mob}}$ indicate linear scaling in $P$. The one-body SVD  takes 6 seconds of CPU time.  
  Smaller errors are obtained with the particles of Type S, for which the maximum MFS coefficients also are smaller.\label{Tab:ellipsoids}}
	\end{table}

For the even larger
example in Fig.~\ref{large_ex} with $P=10000$,
the ellipsoids of Type T are separated by at least $\delta = 0.2$, and driven by randomly sampled forces and torques \footnote{Since it is slow to generate such a large cluster of ellipsoids using true shortest distance computations, an alternative generation of the geometry is used. A cluster of unit spheres is first generated, with each sphere exactly $\delta$ from at least one other. Randomly oriented ellipsoids are then placed inside these spheres. Few ellipsoid pairs approach $\delta$ separation.}. The particles are discretized with $N_v = 34$.
The error is estimated as follows.
For the same type of cluster downsized to 5000 particles, the resulting rigid body velocities on the particles are determined with two grids: $N_v=34$ and $40$. 
The maximum relative velocity difference in the two computed rigid body motions is $\|\vec U_{N_v =34}-\vec U_{N_v = 40}\|_{\infty}/\|\vec U_{N_v = 40}\|_{\infty}= 3.99\times 10^{-6}$. Based on experience from other experiments, this error level is taken as a good estimate for the 10000-particle case.

\end{example}

\section{Conclusions}\label{conclusion}  

We present spectrally accurate well-conditioned solvers for the Laplace elastance and Stokes mobility problems involving a large number of smooth bodies, based on the method of fundamental solutions.
The formulations are free from additional constraints, achieved by projecting the linear space of source strengths into a) the subspace of constant vectors (for elastance) or the subspace of rigid body motions (for mobility), plus b) its orthogonal complement.
The former subspace directly controls the unknown constant potentials or rigid-body motions, while the latter generates a zero-net-charge (or zero-net-force-and-torque) solution potential to which a completion flow is added to account for the
known net charges (or forces and torques).
We call this a ``recompleted'' formulation.
With one-body preconditioning and FMM-acceleration, the scaling is linear in the number of particles, assuming identical particles, and problems with of order $10^4$ particles can be solved with of order one workstation-hour of computation.
For spheres, the number of MFS unknowns needed for a given accuracy is similar to that of the sphere-specialized boundary integral method of \cite{Corona2018,Yan2020}, while avoiding its elaborate analytic formulae.
Yet our method is more general, applying to any particle shape amenable to an MFS solution; for instance, many of our tests are for triaxial ellipsoids. Although not explicitly demonstrated through simulations in this paper, the method efficiently handles polydisperse systems as well.

 Our findings suggest several future directions. For random particle clusters of approximately constant density the GMRES iteration count stays constant with the number of particles for the mobility problem, rather than growing weakly as for the resistance problem \cite{Broms2024}, and we would like to understand this difference (see \cite{USABIAGA2016}).
  For ellipsoids, we have only tested particles with moderate aspect ratios, and it would be interesting to push this limit further. At high aspect ratio, a proportionally larger $N$ per body is expected, at which point it could be valuable to exploit azimuthal symmetry for axisymmetric particles in the one-body dense direct preconditioning method \cite{Liu2016}. Further, the present method could naturally be combined with the lubrication-adapted image systems of \cite{Broms2024}, enabling accurate mobility solutions at much closer separations $\delta$. The present formulation is also expected to aid in dynamic simulations of complex fluids and rheology. Efficient time-stepping will then need a contact-avoiding strategy \cite{Lu2019,Yan2020,Broms2024b}. It is also of great interest to investigate how to utilize the MFS to sample Brownian (thermal) hydrodynamic fluctuations.

\section*{Acknowledgments}
	 Broms and Tornberg acknowledge support from the Swedish Research Council: grant no.~2019-05206 and the research environment grant INTERFACE (biomaterials), no.~2016-06119.
We benefited from discussions with
Leslie Greengard, Dhairya Malhotra, David Krantz, Shravan Veerapaneni and Eduardo Corona, and from the input of the anonymous reviewers.
 The Flatiron Institute is a division of the Simons Foundation.  Broms is grateful for a research visit to CCM Flatiron where the initial ideas for this work were sparked.


 \appendix
  \section{MFS accuracy for spheres}
 
 \subsection{Comparison against a boundary integral scheme}\label{acc_spheres}
 
 For comparable numbers of degrees of freedom, we briefly compare the accuracies of our proposed MFS mobility solver and a sphere scheme based on spherical harmonics by duplicating tests in Yan et al.~\cite{Yan2020}.  In Figs.~\ref{grav_conv} and \ref{grav_conv2}, two unit spheres at $\vec c^{(1)} = [0,0,0]$ and $\vec c^{(2)} = [2+\delta,0,0]$ are affected by a gravitational force $\vec f^{(1)} = \vec f^{(2)} = [0,0,-F_g]$ and in Fig.~\ref{torque_rot}, by a torque $\vec t^{(1)} = \vec t^{(2)}  = [0, T, 0]$. In Figs.~\ref{grav_conv} and \ref{torque_rot}, the number of source points $N$ is varied, fixing proxy radius $R_p = 0.7$, while in Fig.~\ref{grav_conv2}, $N = 762$ is fixed and $R_p$ is varied. \cite[Fig.~2]{Yan2020} shows that at $\delta=0.2$, harmonic maximum degree $p=12$, hence $N=(p+1)(2p+1)=325$ surface unknowns, are needed to reach around $10^{-6}$ relative error. This is consistent with Fig.~\ref{grav_conv}, showing that at this same $\delta$, an error of $10^{-6}$ is reached with slightly more than $N=289$.
 Likewise, the numbers of unknowns for the torques in \cite[Fig.~3]{Yan2020} and our Fig.~\ref{torque_rot} are similar. At $\delta=0.05$, both methods give around $10^{-4}$ error using $N\approx 600$. In short, the MFS appears to have a similar accuracy for a given number of unknowns $N$ as a BIE using spherical harmonics.
 
 In these tests, our error level is computed relative to a fine reference computed with $N = 3600$, and is normalized by the single sphere 
 settling velocity for a single sphere as in \cite{Yan2020}.
 Unlike in that work, our error plateaus at a constant level dependent on $N$ at large $\delta$, attributed to aliasing due to the discreteness of the MFS sources. Such plateaus are never worse than $10^{-5}$ even for the smallest $N=201$. 
At smaller $\delta$, the error is instead dominated by the fact that the singularity in the analytic continuation of the Stokes solution at the surface in the interior domain is not enclosed by the MFS source points \cite{Barnett2007,Doicu2000}. These two error regimes were discussed in Remark \ref{r:rate}.

 \begin{figure}[h!]
 \centering
 		\begin{subfigure}[t]{0.33\textwidth}
			\centering
			\includegraphics[trim = {5.3cm 16.8cm 8.5cm 4.2cm},clip,width=1.1\textwidth]{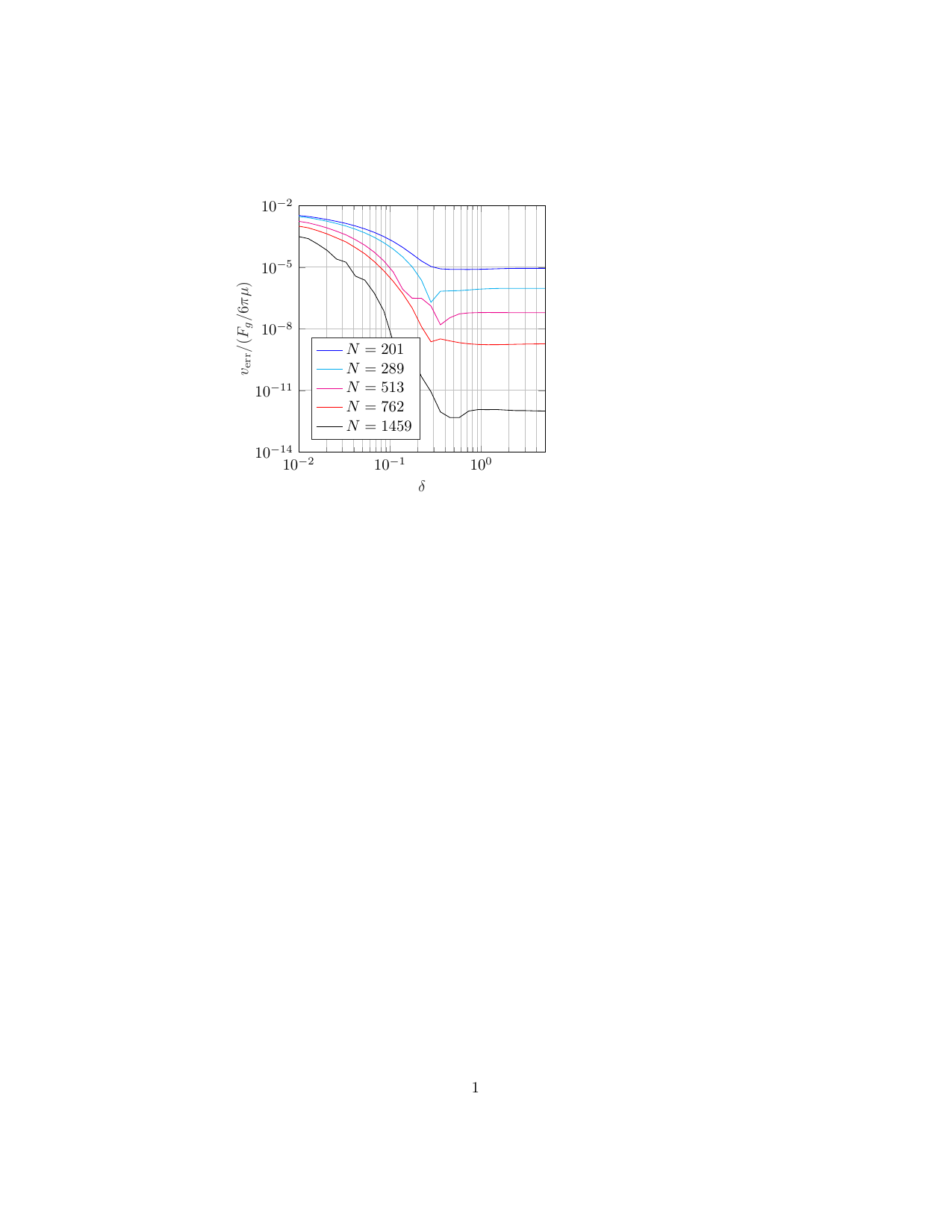}
			\caption{Error with a gravitational force.}
			\label{grav_conv}
		\end{subfigure}~~
   		\begin{subfigure}[t]{0.33\textwidth}
			\centering
		\includegraphics[trim = {5.3cm 16.8cm 8.5cm 4.2cm},clip,width=1.1\textwidth]{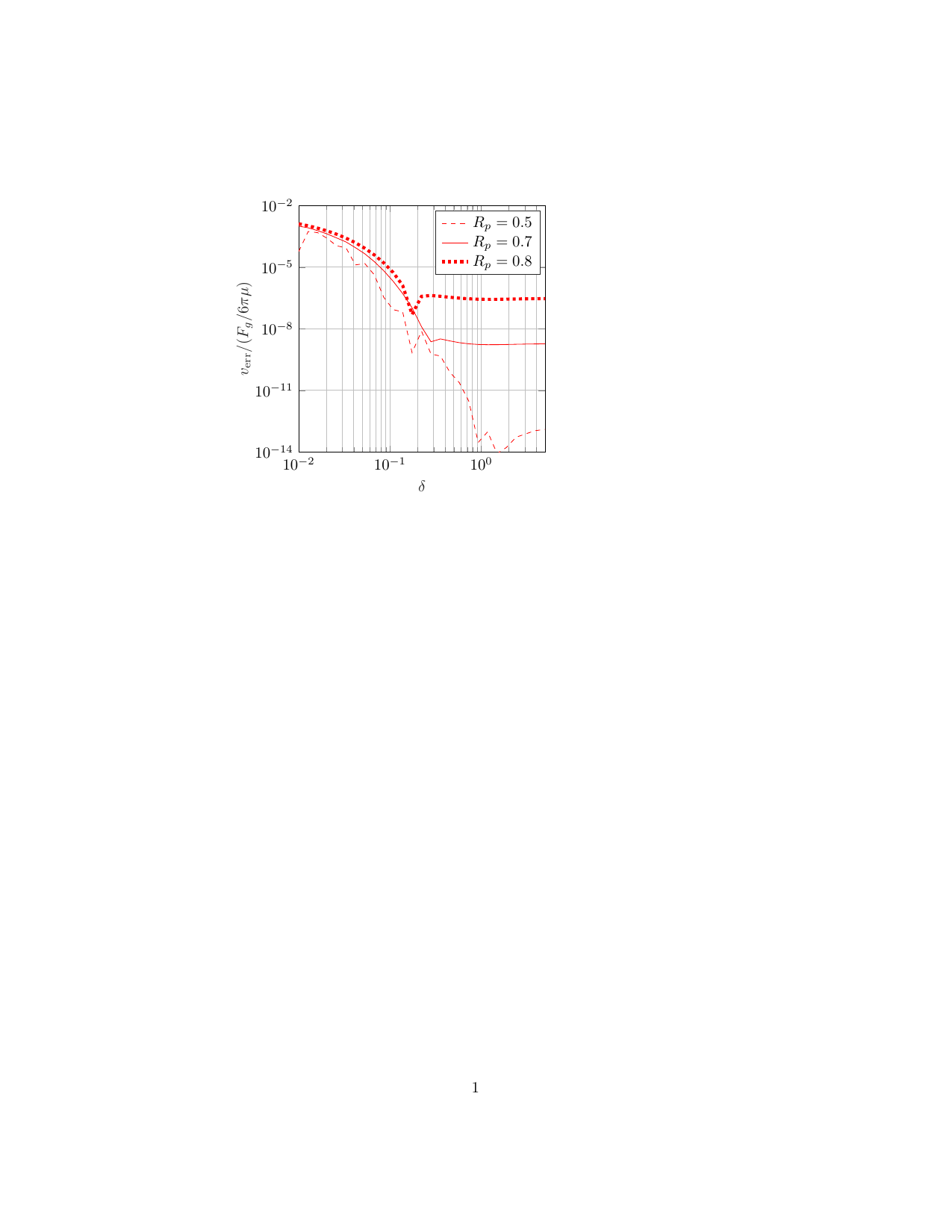}
			\caption{Error with a gravitational force.}
			\label{grav_conv2}
		\end{subfigure}~~
   		\begin{subfigure}[t]{0.33\textwidth}
			\centering
		\includegraphics[trim = {5.3cm 16.8cm 8.5cm 4.2cm},clip,width=1.1\textwidth]{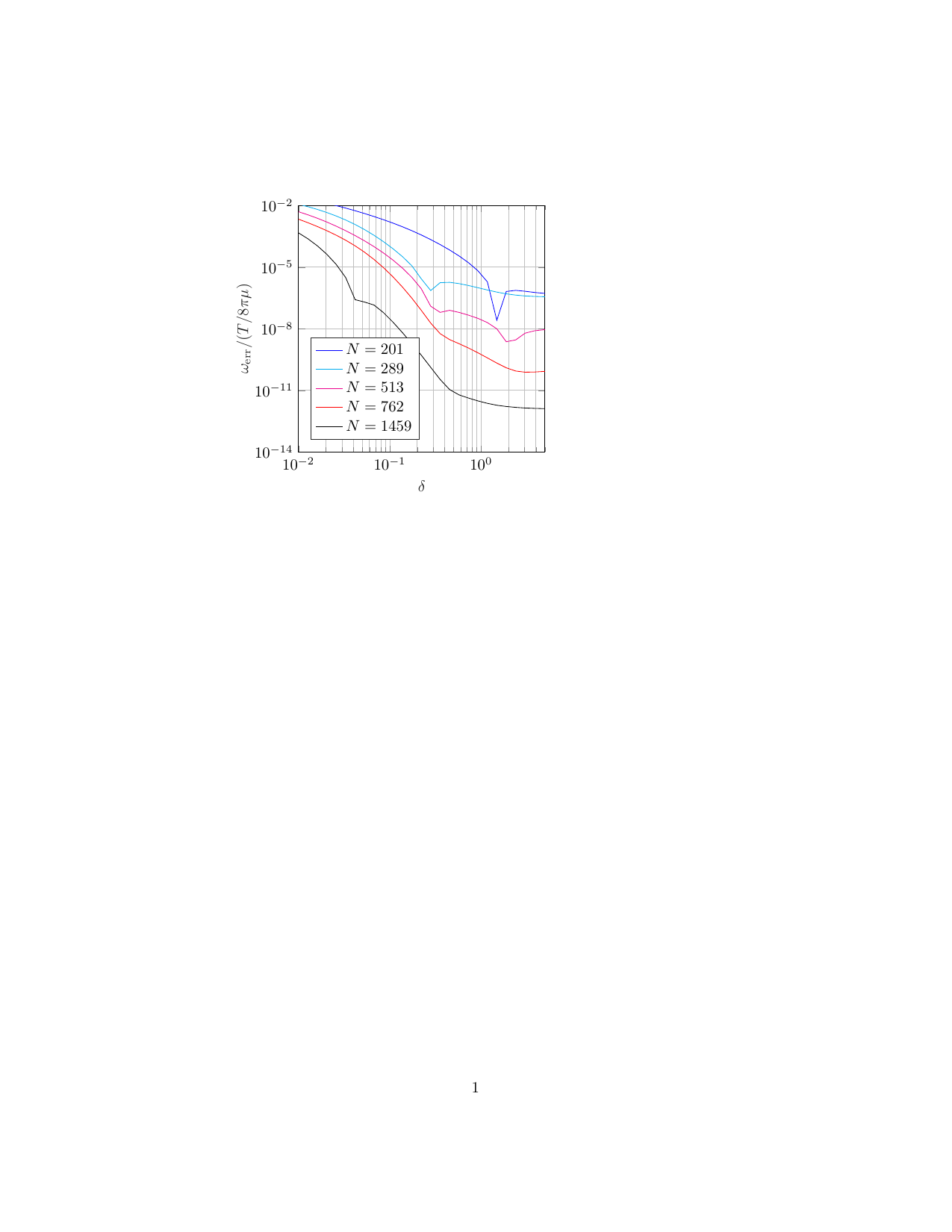}
			\caption{Error with an equal torque.}
			\label{torque_rot}
		\end{subfigure}
  \caption{Convergence study in the number of proxy points $N$ for two unit spheres separated by $\delta$. Errors are similar to those of the BIE spherical harmonics scheme of \cite{Yan2020} for comparable numbers of degrees of freedom. 
  In panels (a) and (c), the particles are affected by a gravitational force $F_g$ and the translational velocity error is taken, while in panel (b), the spheres are affected by an equal torque $T$ and the rotational velocity error is taken.}
  \label{conv_pair}
  \end{figure}

  \subsection{Comparison against the rigid multiblob method}\label{multiblob}

We repeat the experiment with five randomly positioned spheres presented in Fig.~\ref{Fig:sphere_QBXa}, employing the rigid multiblob method to compare its accuracy against that of our MFS scheme. The rigid multiblob method is an approximate solution technique widely used in the engineering community, thouroughly described in e.g.~\cite{USABIAGA2016}, where a collection of ``blobs'' approximates the particle surface. 
Mathematically, it leads to a saddle point system similar to that of the MFS, but with a regularized kernel---the Rotne–Prager–Yamakawa (RPY) tensor---replacing the Stokeslet. This similarity allows for the same solution strategy as with the MFS, where a recompleted formulation can be solved with one-body preconditioning (non-standard in the rigid multiblob literature). Unlike in MFS, the source and collocation points coincide, and must be placed slightly offset from the particle boundary to reproduce the correct far-field behavior; this also improves near-field accuracy \cite{Broms2022}. Following \cite{Broms2022}, we therefore optimize the discretization of the multiblob particles with respect to the offset and the regularization parameter. We then solve the mobility problem for the same 56 test geometries used in Section~\ref{BIEcomp}, which consist of five randomly positioned unit spheres driven by randomly sampled forces and torques. Accuracy is reported as a function of the minimum particle-particle distance $\delta$ in Figure~\ref{fig_multiblob}.
Despite optimization, and the inclusion of spherical design grids with the same number of degrees of freedom as the MFS, the multiblob method only achieved 1--4 digit accuracy, in contrast with the 4--8 digits using MFS.
  \begin{figure}[h!]
  		\centering
					\includegraphics[trim = {5cm 17cm 4.0cm 4.4cm},clip,width=0.66\textwidth]{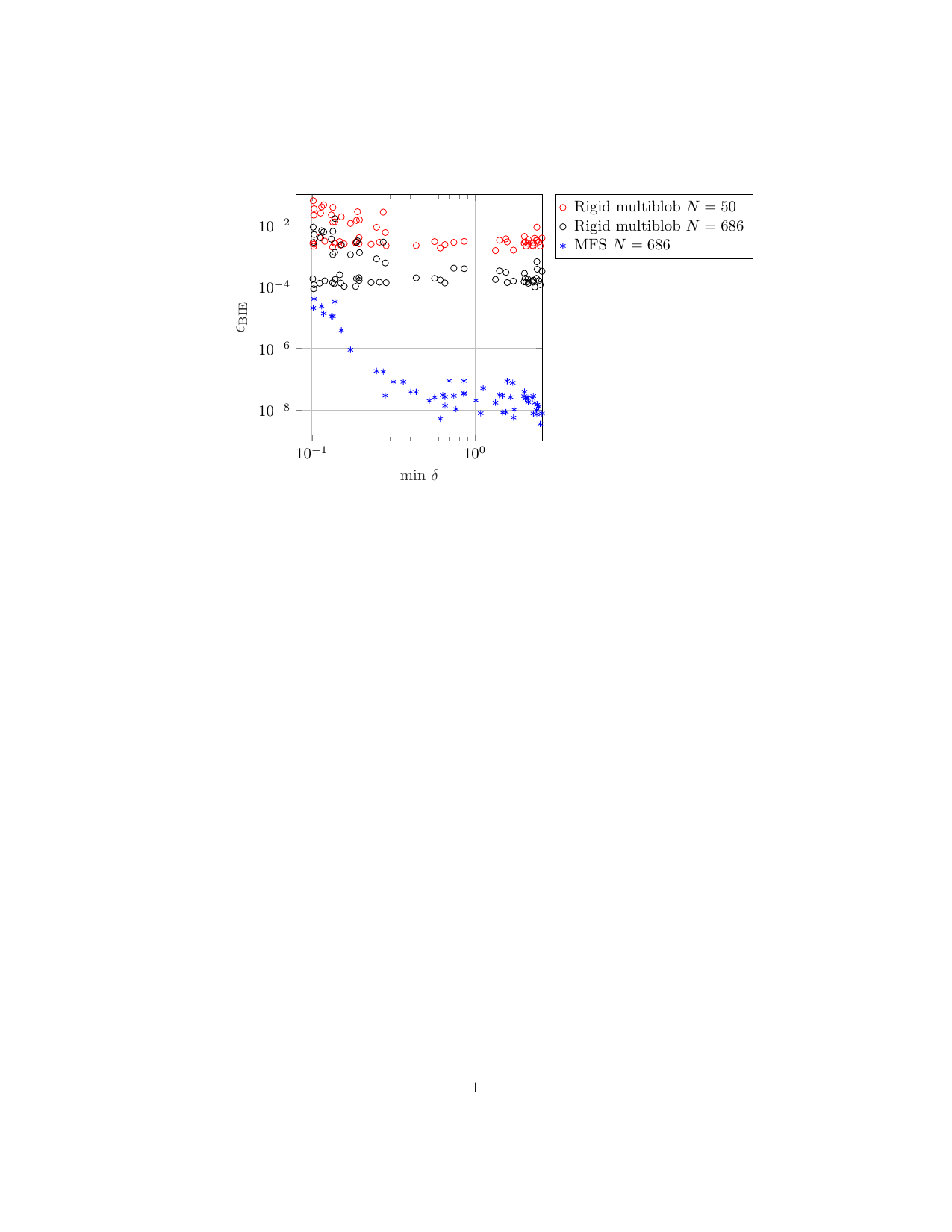}
			\caption{Comparison between our MFS scheme and the rigid multiblob method, each using $N$ sources per particle, for configurations of five randomly positioned spheres. Velocity errors $\epsilon_{\text{BIE}}$ as defined in~\eqref{BIEerr} are computed relative to the well-resolved BIE reference solution, and $\delta$ denotes the minimum particle-particle separation.}         
			\label{fig_multiblob}
            \end{figure}
	\bibliographystyle{myIEEEtran} 
	\bibliography{mobility_refs}

\begin{thebibliography}{10}
\providecommand{\url}[1]{#1}
\csname url@samestyle\endcsname
\providecommand{\newblock}{\relax}
\providecommand{\bibinfo}[2]{#2}
\providecommand{\BIBentrySTDinterwordspacing}{\spaceskip=0pt\relax}
\providecommand{\BIBentryALTinterwordstretchfactor}{4}
\providecommand{\BIBentryALTinterwordspacing}{\spaceskip=\fontdimen2\font plus
\BIBentryALTinterwordstretchfactor\fontdimen3\font minus
  \fontdimen4\font\relax}
\providecommand{\BIBforeignlanguage}[2]{{%
\expandafter\ifx\csname l@#1\endcsname\relax
\typeout{** WARNING: IEEEtran.bst: No hyphenation pattern has been}%
\typeout{** loaded for the language `#1'. Using the pattern for}%
\typeout{** the default language instead.}%
\else
\language=\csname l@#1\endcsname
\fi
#2}}
\providecommand{\BIBdecl}{\relax}
\BIBdecl

\bibitem{souzy15}
M.~Souzy, X.~Yin, E.~Villermaux, C.~Abid, and B.~Metzger, ``Super-diffusion in
  sheared suspensions,'' \emph{Phys.\ Fluids}, vol.~27, p. 041705,  2015 doi:
  \href{http://dx.doi.org/10.1063/1.4918613}{10.1063/1.4918613}

\bibitem{Driscoll2017}
M.~Driscoll, B.~Delmotte, M.~Youssef, S.~Sacanna, A.~Donev, and P.~Chaikin,
  ``\BIBforeignlanguage{eng}{Unstable fronts and motile structures formed by
  microrollers},'' \emph{\BIBforeignlanguage{eng}{Nat.~Phys.}}, vol.~13, no.~4,
  pp. 375--379,  2017 doi:
  \href{http://dx.doi.org/10.1038/nphys3970}{10.1038/nphys3970}

\bibitem{Sprinkle2017}
\BIBentryALTinterwordspacing
B.~Sprinkle, F.~{Balboa Usabiaga}, N.~A. Patankar, and A.~Donev, ``{Large scale
  Brownian dynamics of confined suspensions of rigid particles},''
  \emph{J.~Chem.~Phys}, vol. 147, no.~24,  2017 doi:
  \href{http://dx.doi.org/10.1063/1.5003833}{10.1063/1.5003833}
\BIBentrySTDinterwordspacing

\bibitem{Sprinkle2020}
B.~Sprinkle, E.~B. {Van Der Wee}, Y.~Luo, M.~M. Driscoll, and A.~Donev,
  ``{Driven dynamics in dense suspensions of microrollers},'' \emph{Soft
  Matter}, vol.~16, no.~34, pp. 7982--8001,  2020 doi:
  \href{http://dx.doi.org/10.1039/d0sm00879f}{10.1039/d0sm00879f}

\bibitem{Wang2021}
J.~Wang, E.~Nazockdast, and A.~Barnett, ``An integral equation method for the
  simulation of doubly-periodic suspensions of rigid bodies in a shearing
  viscous flow,'' \emph{J.~Comput.~Phys}, vol. 424,  1 2021 doi:
  \href{http://dx.doi.org/10.1016/j.jcp.2020.109809}{10.1016/j.jcp.2020.109809}

\bibitem{Ge2022}
\BIBentryALTinterwordspacing
Z.~Ge and G.~J. Elfring, ``Rheology of periodically sheared suspensions
  undergoing reversible-irreversible transition,'' \emph{Phys. Rev. E}, vol.
  106, p. 054616,  Nov 2022 doi:
  \href{http://dx.doi.org/10.1103/PhysRevE.106.054616}{10.1103/PhysRevE.106.054616}
\BIBentrySTDinterwordspacing

\bibitem{fossbrady}
D.~R. Foss and J.~F. Brady, ``Structure, diffusion and rheology of {B}rownian
  suspensions by {S}tokesian {D}ynamics simulation,'' \emph{J.\ Fluid Mech.},
  vol. 407, pp. 167--200,  2000 doi:
  \href{http://dx.doi.org/10.1017/S0022112099007557}{10.1017/S0022112099007557}

\bibitem{Wang2016}
M.~Wang and J.~F. Brady, ``{Spectral {Ewald} Acceleration of Stokesian Dynamics
  for polydisperse suspensions},'' \emph{J.~Comput.~Phys}, vol. 306, pp.
  443--477,  2 2016 doi:
  \href{http://dx.doi.org/10.1016/j.jcp.2015.11.042}{10.1016/j.jcp.2015.11.042}

\bibitem{RevolJean}
J.-F. Revol, L.~Godbout, X.-M. Dong, D.~G. Gray, H.~Chanzy, and G.~Maret,
  ``\BIBforeignlanguage{eng}{Chiral nematic suspensions of cellulose
  crystallites; phase separation and magnetic field orientation},''
  \emph{\BIBforeignlanguage{eng}{Liq.~Cryst.}}, vol.~16, no.~1, pp. 127--134,
  1994 doi:
  \href{http://dx.doi.org/10.1080/02678299408036525}{10.1080/02678299408036525}

\bibitem{Wang2019}
Y.~Wang, Z.~Chen, J.~Tang, and N.~Lin, \emph{Tunable optical materials based on
  self-assembly of polysaccharide nanocrystals}.\hskip 1em plus 0.5em minus
  0.4em\relax Singapore: Springer, 2019, pp. 87--136.  ISBN 978-981-15-0913-1
  doi:
  \href{http://dx.doi.org/10.1007/978-981-15-0913-1\_3}{10.1007/978-981-15-0913-1\_3}

\bibitem{Yan2019}
W.~Yan, H.~Zhang, and M.~J. Shelley, ``{Computing collision stress in
  assemblies of active spherocylinders: Applications of a fast and generic
  geometric method},'' \emph{J.~Chem.~Phys}, vol. 150, no.~6,  2019 doi:
  \href{http://dx.doi.org/10.1063/1.5080433}{10.1063/1.5080433}

\bibitem{Hakansson2014}
K.~M. H{\aa}kansson, A.~B. Fall, F.~Lundell, S.~Yu, C.~Krywka, S.~V. Roth,
  G.~Santoro, M.~Kvick, L.~{Prahl Wittberg}, L.~W{\aa}gberg, and L.~D.
  S{\"{o}}derberg, ``{Hydrodynamic alignment and assembly of nanofibrils
  resulting in strong cellulose filaments},'' \emph{Nat. Commun.}, vol.~5,
  2014 doi: \href{http://dx.doi.org/10.1038/ncomms5018}{10.1038/ncomms5018}

\bibitem{Tran}
\BIBentryALTinterwordspacing
A.~Tran, C.~E. Boott, and M.~J. MacLachlan, ``Understanding the self-assembly
  of cellulose nanocrystals -- toward chiral photonic materials,''
  \emph{Adv.~Mater.}, vol.~32, no.~41, p. 1905876,  2020 doi:
  \href{http://dx.doi.org/10.1002/adma.201905876}{10.1002/adma.201905876}
\BIBentrySTDinterwordspacing

\bibitem{Wang2023}
X.~Wang, B.~Sprinkle, H.~K. Bisoyi, T.~Yang, L.~Chen, S.~Huang, and Q.~Li,
  ``\BIBforeignlanguage{eng}{Colloidal tubular microrobots for cargo transport
  and compression},''
  \emph{\BIBforeignlanguage{eng}{Proc.~Natl.~Acad.~Sci.~U.~S.~A}}, vol. 120,
  no.~37, pp. e2\,304\,685\,120--e2\,304\,685\,120,  2023 doi:
  \href{http://dx.doi.org/10.1073/pnas.2304685120}{10.1073/pnas.2304685120}

\bibitem{Chaparro2023}
\BIBentryALTinterwordspacing
D.~Moreno-Chaparro, N.~Moreno, F.~B. Usabiaga, and M.~Ellero, ``{Computational
  modeling of passive transport of functionalized nanoparticles},''
  \emph{J.~Chem.~Phys}, vol. 158, no.~10, p. 104108,  03 2023 doi:
  \href{http://dx.doi.org/10.1063/5.0136833}{10.1063/5.0136833}
\BIBentrySTDinterwordspacing

\bibitem{Maxey2017}
M.~Maxey, ``{Simulation methods for particulate flows and concentrated
  suspensions},'' \emph{Annu.~Rev.~Fluid Mech.}, vol.~49, pp. 171--193,  2017
  doi:
  \href{http://dx.doi.org/10.1146/annurev-fluid-122414-034408}{10.1146/annurev-fluid-122414-034408}

\bibitem{Bagge2023}
J.~Bagge, ``\BIBforeignlanguage{eng}{Accurate quadrature and fast summation in
  boundary integral methods for {Stokes flow}},'' Ph.D. dissertation, KTH,
  Numerical Analysis. ISBN 9789180406086 2023.

\bibitem{pelaez2022}
\BIBentryALTinterwordspacing
R.~P. Pel{\'a}ez, ``Complex fluids in the GPU era: Algorithms and
  simulations,'' Ph.D. dissertation, Universidad Aut{\'o}noma de Madrid, 2022.
  \url{http://hdl.handle.net/10486/703353}
\BIBentrySTDinterwordspacing

\bibitem{USABIAGA2016}
F.~B. Usabiaga, B.~Kallemov, B.~Delmotte, A.~P.~S. Bhalla, A.~Donev, and B.~E.
  Griffith, ``{Hydrodynamics of suspensions of passive and active rigid
  particles: a rigid multiblob approach},'' \emph{Comm. App. Math. Comp. Sci.},
  vol.~11, no.~2,  2016 doi:
  \href{http://dx.doi.org/10.2140/camcos.2016.11.217}{10.2140/camcos.2016.11.217}

\bibitem{Fiore2019}
A.~M. Fiore and J.~W. Swan, ``{Fast Stokesian dynamics},'' \emph{J. Fluid
  Mech.}, vol. 878, pp. 544--597,  2019 doi:
  \href{http://dx.doi.org/10.1017/jfm.2019.640}{10.1017/jfm.2019.640}

\bibitem{Corona2017}
\BIBentryALTinterwordspacing
E.~Corona, L.~Greengard, M.~Rachh, and S.~Veerapaneni, ``{An integral equation
  formulation for rigid bodies in Stokes flow in three dimensions},''
  \emph{J.~Comput.~Phys}, vol. 332, pp. 504--519,  2017 doi:
  \href{http://dx.doi.org/10.1016/j.jcp.2016.12.018}{10.1016/j.jcp.2016.12.018}
\BIBentrySTDinterwordspacing

\bibitem{Corona2018}
\BIBentryALTinterwordspacing
E.~Corona and S.~Veerapaneni, ``Boundary integral equation analysis for
  suspension of spheres in Stokes flow,'' \emph{J.~Comput.~Phys}, vol. 362, pp.
  327--345,  2018 doi:
  \href{http://dx.doi.org/10.1016/j.jcp.2018.02.017}{10.1016/j.jcp.2018.02.017}
\BIBentrySTDinterwordspacing

\bibitem{Yan2020}
W.~Yan, E.~Corona, D.~Malhotra, S.~Veerapaneni, and M.~Shelley, ``{A scalable
  computational platform for particulate Stokes suspensions},''
  \emph{J.~Comput.~Phys}, vol. 416, p. 109524,  2020 doi:
  \href{http://dx.doi.org/10.1016/j.jcp.2020.109524}{10.1016/j.jcp.2020.109524}

\bibitem{AfKlinteberg2016}
L.~af~Klinteberg and A.-K. Tornberg, ``{A fast integral equation method for
  solid particles in viscous flow using quadrature by expansion},''
  \emph{J.~Comput.~Phys}, vol. 326, pp. 420--445,  2016 doi:
  \href{http://dx.doi.org/10.1016/j.jcp.2016.09.006}{10.1016/j.jcp.2016.09.006}

\bibitem{Rachh2016}
M.~Rachh and L.~Greengard, ``Integral equation methods for elastance and
  mobility problems in two dimensions,'' \emph{SIAM J.~Numer.~Anal.}, vol.~54,
  pp. 2889--2909,  2016 doi:
  \href{http://dx.doi.org/10.1137/15M103251X}{10.1137/15M103251X}

\bibitem{tornberg2008fast}
A.-K. Tornberg and L.~Greengard, ``A fast multipole method for the
  three-dimensional {S}tokes equations,'' \emph{J. Comput. Phys.}, vol. 227,
  no.~3, pp. 1613--1619,  2008 doi:
  \href{http://dx.doi.org/10.1016/j.jcp.2007.06.029}{10.1016/j.jcp.2007.06.029}

\bibitem{atkinson82}
K.~Atkinson, ``The Numerical Solution of {L}aplace's Equation in Three
  Dimensions,'' \emph{SIAM J. Numer. Anal.}, vol.~19, no.~2, pp. 263--274,
  1982 doi: \href{http://dx.doi.org/10.1137/0719017}{10.1137/0719017}

\bibitem{Gimbutas2013}
Z.~Gimbutas and S.~Veerapaneni, ``A Fast Algorithm for Spherical Grid Rotations
  and its Application to Singular Quadrature,'' \emph{SIAM J.~Sci.~Comput.},
  vol.~5, no.~6, pp. A2738--A2751,  2013 doi:
  \href{http://dx.doi.org/10.1137/120900587}{10.1137/120900587}

\bibitem{Ka89}
M.~Katsurada, ``A mathematical study of the charge simulation method. {II},''
  \emph{J. Fac. Sci. Univ. Tokyo Sect. IA Math.}, vol.~36, no.~1, pp. 135--162,
  1989.

\bibitem{Alves2004}
C.~J. Alves and A.~L. Silvestre, ``Density results using Stokeslets and a
  method of fundamental solutions for the {Stokes} equations,''
  \emph{Eng.~Anal.~Bound.~Elem.}, vol.~28, pp. 1245--1252,  2004 doi:
  \href{http://dx.doi.org/10.1016/j.enganabound.2003.08.007}{10.1016/j.enganabound.2003.08.007}

\bibitem{Fairweather2005}
G.~Fairweather, A.~Karageorghis, Y.-S. Smyrlis, Z.~Wu, and B.~Y.~C. Hon, ``A
  matrix decomposition MFS algorithm for axisymmetric biharmonic problems,''
  \emph{Adv.~Comput.~Math.}, vol.~23, pp. 55--71,  2005 doi:
  \href{http://dx.doi.org/0.1007/s10444-004-1808-6}{0.1007/s10444-004-1808-6}

\bibitem{Barnett2007}
A.~H. Barnett and T.~Betcke, ``Stability and convergence of the method of
  fundamental solutions for Helmholtz problems on analytic domains,''
  \emph{J.~Comput.~Phys}, vol. 227, pp. 7003--7026,  7 2008 doi:
  \href{http://dx.doi.org/10.1016/j.jcp.2008.04.008}{10.1016/j.jcp.2008.04.008}

\bibitem{Alves2009}
C.~J. Alves, ``On the choice of source points in the method of fundamental
  solutions,'' \emph{Eng.~Anal.~Bound.~Elem.}, vol.~33, pp. 1348--1361,  12
  2009 doi:
  \href{http://dx.doi.org/10.1016/j.enganabound.2009.05.007}{10.1016/j.enganabound.2009.05.007}

\bibitem{Karageorghis2019}
A.~Karageorghis and D.~Lesnic, ``The method of fundamental solutions for the
  Oseen steady-state viscous flow past obstacles of known or unknown shapes,''
  \emph{Numer.~Methods Partial Differ.~Equ.}, vol.~35, pp. 2103--2119,  11 2019
  doi: \href{http://dx.doi.org/10.1002/num.22404}{10.1002/num.22404}

\bibitem{Antunes2022}
P.~R. Antunes, ``A well-conditioned method of fundamental solutions for
  {Laplace} equation,'' \emph{Numer.~Algorithms}, vol.~91, pp. 1381--1405,  11
  2022 doi:
  \href{http://dx.doi.org/10.1007/s11075-022-01306-x}{10.1007/s11075-022-01306-x}

\bibitem{Broms2024}
A.~Broms, A.~H. Barnett, and A.-K. Tornberg, ``Accurate close interactions of
  {S}tokes spheres using lubrication-adapted image systems,''
  \emph{J.~Comput.~Phys.}, vol. 523, p. 113636,  2025 doi:
  \href{http://dx.doi.org/10.1016/j.jcp.2024.113636}{10.1016/j.jcp.2024.113636}

\bibitem{Liu2016}
\BIBentryALTinterwordspacing
Y.~Liu and A.~H. Barnett, ``Efficient numerical solution of acoustic scattering
  from doubly-periodic arrays of axisymmetric objects,''
  \emph{J.~Comput.~Phys}, vol. 324, pp. 226--245,  2016 doi:
  \href{http://dx.doi.org/10.1016/j.jcp.2016.08.011}{10.1016/j.jcp.2016.08.011}
\BIBentrySTDinterwordspacing

\bibitem{Stein2022}
\BIBentryALTinterwordspacing
D.~B. Stein and A.~H. Barnett, ``Quadrature by fundamental solutions:
  kernel-independent layer potential evaluation for large collections of simple
  objects,'' \emph{Adv.~Comput.~Math.}, vol.~48, p.~60,  10 2022 doi:
  \href{http://dx.doi.org/10.1007/s10444-022-09971-1}{10.1007/s10444-022-09971-1}
\BIBentrySTDinterwordspacing

\bibitem{PowerMiranda}
H.~Power and G.~Miranda, ``Second kind integral equation formulation of Stokes'
  flows past a particle of arbitrary shape,'' \emph{SIAM J.~Appl.~Math},
  vol.~47, pp. 689--698,  1987 doi:
  \href{http://dx.doi.org/10.1137/0147047}{10.1137/0147047}

\bibitem{Pozrikidis1992}
C.~C. Pozrikidis, \emph{\BIBforeignlanguage{eng}{Boundary integral and
  singularity methods for linearized viscous flow}}, ser. Cambridge texts in
  applied mathematics 8.\hskip 1em plus 0.5em minus 0.4em\relax Cambridge:
  Cambridge Univ. Press, 1992. ISBN 0-521-40502-5

\bibitem{Mikhlin}
S.~G. Mikhlin, \emph{\BIBforeignlanguage{eng}{Integral equations and their
  applications to certain problems in mechanics, mathematical physics, and
  technology}}, ser. International Series of Monographs in Pure and Applied
  Mathematics ; Volume 4.\hskip 1em plus 0.5em minus 0.4em\relax Oxford,
  England: Pergamon Press Ltd, 1957. ISBN 1-4832-2627-1

\bibitem{Kim1991}
S.~Kim and S.~J. Karrila, \emph{Microhydrodynamics: Principles and Selected
  Applications}.\hskip 1em plus 0.5em minus 0.4em\relax Butterworth-Heinemann,
  1991.  ISBN 978-0-7506-9173-4 doi:
  \href{http://dx.doi.org/10.1016/c2013-0-04644-0}{10.1016/c2013-0-04644-0}

\bibitem{Bagge2021}
J.~Bagge and A.~K. Tornberg, ``{Highly accurate special quadrature methods for
  Stokesian particle suspensions in confined geometries},''
  \emph{Int.~J.~Numer.~Methods Fluids}, no. February, pp. 1--50,  2021 doi:
  \href{http://dx.doi.org/10.1002/fld.4970}{10.1002/fld.4970}

\bibitem{malhotra2023}
D.~Malhotra and A.~Barnett, ``Efficient convergent boundary integral methods
  for slender bodies,'' \emph{J.~Comput.~Phys}, vol. 503, p. 112855,  2024 doi:
  \href{http://dx.doi.org/10.1016/j.jcp.2024.112855}{10.1016/j.jcp.2024.112855}

\bibitem{karrilakim89}
S.~J. Karrila and S.~Kim, ``Integral equations of the second kind for Stokes
  flow: direction solution for physical variables and removal of inherent
  accuracy limitations,'' \emph{Chem.~Eng.~Commun.}, vol.~82, no.~1, pp.
  123--161,  1989 doi:
  \href{http://dx.doi.org/10.1080/00986448908940638}{10.1080/00986448908940638}

\bibitem{onesmatrix}
J.~Sifuentes, Z.~Gimbutas, and L.~Greengard, ``Randomized methods for
  rank-deficient linear systems,'' \emph{Elec. Trans. Numer. Anal.}, vol.~44,
  pp. 177--188,  2015 doi:
  \href{http://dx.doi.org/https://doi.org/10.48550/arXiv.1401.3068}{https://doi.org/10.48550/arXiv.1401.3068}

\bibitem{Lefebvre2021}
A.~Lefebvre-Lepot and F.~Nabet, ``Numerical simulation of rigid particles in
  Stokes flow: Lubrication correction for general shapes of particles,''
  \emph{Math.~Model.~Nat.~Phenom.}, vol.~16,  2021 doi:
  \href{http://dx.doi.org/10.1051/mmnp/2021037}{10.1051/mmnp/2021037}

\bibitem{Cheng1998}
H.~Cheng and L.~Greengard, ``{A method of images for the evaluation of
  electrostatic fields in systems of closely spaced conducting cylinders},''
  \emph{SIAM J.~Appl.~Math}, vol.~58, no.~1, pp. 122--141,  1998 doi:
  \href{http://dx.doi.org/10.1137/S0036139996297614}{10.1137/S0036139996297614}

\bibitem{liu2024}
\BIBentryALTinterwordspacing
R.~Liu, H.~Zhu, H.~Guo, M.~Bonnet, and S.~Veerapaneni, ``Shape optimization of
  slip-driven axisymmetric microswimmers,'' 2024.
  \url{https://arxiv.org/abs/2405.00656}
\BIBentrySTDinterwordspacing

\bibitem{TrefethenBau}
L.~N. Trefethen and D.~Bau, \emph{\BIBforeignlanguage{eng}{Numerical linear
  algebra}}.\hskip 1em plus 0.5em minus 0.4em\relax Philadelphia: Society for
  Industrial and Applied Mathematics, 1997. ISBN 0-89871-361-7

\bibitem{Lai2015}
J.~Lai, M.~Kobayashi, and A.~Barnett, ``A fast and robust solver for the
  scattering from a layered periodic structure containing multi-particle
  inclusions,'' \emph{J.~Comput.~Phys}, vol. 298, pp. 194--208,  10 2015 doi:
  \href{http://dx.doi.org/10.1016/j.jcp.2015.06.005}{10.1016/j.jcp.2015.06.005}

\bibitem{Malhotra2015}
D.~Malhotra and G.~Biros, ``{PVFMM}: A Parallel Kernel Independent FMM for
  Particle and Volume Potentials,'' \emph{Commun.~Comput.~Phys.}, vol.~18, pp.
  808--830,  9 2015 doi:
  \href{http://dx.doi.org/10.4208/cicp.020215.150515sw}{10.4208/cicp.020215.150515sw}

\bibitem{Parolin2022}
\BIBentryALTinterwordspacing
{Parolin, Emile}, {Huybrechs, Daan}, and {Moiola, Andrea}, ``Stable
  approximation of {Helmholtz} solutions in the disk by evanescent plane
  waves,'' \emph{ESAIM: M2AN}, vol.~57, no.~6, pp. 3499--3536,  2023 doi:
  \href{http://dx.doi.org/10.1051/m2an/2023081}{10.1051/m2an/2023081}
\BIBentrySTDinterwordspacing

\bibitem{Hanson1969}
\BIBentryALTinterwordspacing
R.~J. Hanson and C.~L. Lawson, ``Extensions and Applications of the Householder
  Algorithm for Solving Linear Least Squares Problems,'' \emph{Mathematics of
  Computation}, vol.~23, no. 108, pp. 787--812,  1969 doi:
  \href{http://dx.doi.org/10.2307/2004965}{10.2307/2004965}
\BIBentrySTDinterwordspacing

\bibitem{Scott2022}
J.~Scott and M.~Tůma, ``\BIBforeignlanguage{eng}{Solving large linear least
  squares problems with linear equality constraints},''
  \emph{\BIBforeignlanguage{eng}{BIT Numer.~Math.}}, vol.~62, no.~4, pp.
  1765--1787,  2022 doi:
  \href{http://dx.doi.org/10.1007/s10543-022-00930-2}{10.1007/s10543-022-00930-2}

\bibitem{Benzi2005}
M.~Benzi, G.~H. Golub, and J.~Liesen, ``{Numerical solution of saddle point
  problems},'' \emph{Acta Numer.}, vol.~14, no. 2005, pp. 1--137,  2005 doi:
  \href{http://dx.doi.org/10.1017/S0962492904000212}{10.1017/S0962492904000212}

\bibitem{ChengH.1999}
H.~Cheng, L.~Greengard, and V.~Rokhlin, ``\BIBforeignlanguage{eng}{A Fast
  Adaptive Multipole Algorithm in Three Dimensions},''
  \emph{\BIBforeignlanguage{eng}{J.~Comput.~Phys}}, vol. 155, no.~2, pp.
  468--498,  1999 doi:
  \href{http://dx.doi.org/10.1006/jcph.1999.6355}{10.1006/jcph.1999.6355}

\bibitem{FMM3D}
\BIBentryALTinterwordspacing
T.~Askham, Z.~Gimbutas, L.~Greengard, L.~Lu, J.~Magland, D.~Malhotra,
  M.~O'Neil, M.~Rachh, V.~Rokhlin, and F.~Vico., ``Fast multipole methods in
  three dimensions (FMM3D) version 1.0.0.'' 2023.
  \url{https://fmm3d.readthedocs.io/en/latest/index.html}
\BIBentrySTDinterwordspacing

\bibitem{sphdesign}
P.~Delsarte, J.~Goethals, and J.~Seidel, ``Spherical codes and designs,''
  \emph{Geom. Dedicata}, vol.~6, pp. 363--388,  1977 doi:
  \href{http://dx.doi.org/10.1007/BF03187604}{10.1007/BF03187604}

\bibitem{womersley18}
R.~S. Womersley, \emph{Efficient Spherical Designs with Good Geometric
  Properties}.\hskip 1em plus 0.5em minus 0.4em\relax Cham: Springer
  International Publishing, 2018,  pp. 1243--1285 doi:
  \href{http://dx.doi.org/10.1007/978-3-319-72456-0\_57}{10.1007/978-3-319-72456-0\_57}

\bibitem{Cheng2000}
\BIBentryALTinterwordspacing
H.~Cheng, ``{On the method of images for systems of closely spaced conducting
  spheres},'' \emph{SIAM J.~Appl.~Math}, vol.~61, no.~4, pp. 1324--1337,  2000
  doi:
  \href{http://dx.doi.org/10.1137/s0036139999364992}{10.1137/s0036139999364992}
\BIBentrySTDinterwordspacing

\bibitem{Doicu2000}
A.~Doicu, Y.~Eremin, and T.~Wriedt, \emph{\BIBforeignlanguage{eng}{Acoustic and
  electromagnetic scattering analysis using discrete sources}}.\hskip 1em plus
  0.5em minus 0.4em\relax London: Academic Press, 2000. ISBN 0-12-219740-2

\bibitem{Lu2019}
L.~Lu, M.~J. Morse, A.~Rahimian, G.~Stadler, and D.~Zorin, ``Scalable
  simulation of realistic volume fraction red blood cell flows through vascular
  networks,'' \emph{International Conference for High Performance Computing,
  Networking, Storage and Analysis, SC},  2019 doi:
  \href{http://dx.doi.org/10.1145/3295500.3356203}{10.1145/3295500.3356203}

\bibitem{Broms2024b}
\BIBentryALTinterwordspacing
A.~Broms and A.-K. Tornberg, ``A barrier method for contact avoiding particles
  in Stokes flow,'' \emph{J.~Comput.~Phys}, vol. 497, p. 112648,  2024 doi:
  \href{http://dx.doi.org/10.1016/j.jcp.2023.112648}{10.1016/j.jcp.2023.112648}
\BIBentrySTDinterwordspacing

\bibitem{Broms2022}
A.~Broms, M.~Sandberg, and A.-K. Tornberg, ``\BIBforeignlanguage{eng}{A locally
  corrected multiblob method with hydrodynamically matched grids for the Stokes
  mobility problem},'' \emph{\BIBforeignlanguage{eng}{J.~Comput.~Phys}}, vol.
  487, pp. 112\,172--,  2023 doi:
  \href{http://dx.doi.org/10.1016/j.jcp.2023.112172}{10.1016/j.jcp.2023.112172}

\end{thebibliography}

\end{document}